\documentclass[12pt]{article}

\usepackage[colorlinks=true,backref=page]{hyperref} 

\usepackage[labelsep=endash]{caption}

\usepackage{graphicx}
\usepackage{latexsym,amssymb}
\usepackage{amsthm}
\usepackage{indentfirst}
\usepackage{amsmath}
\usepackage{color}
\usepackage{xcolor}

\textwidth=16. true cm
\textheight=24. true cm
\voffset=-2. true cm
\hoffset = -1.5 true cm

\newtheorem{theoremalph}{Theorem}

\newtheorem{Theorem}{Theorem}[section]
\newtheorem*{Theorem A}{Theorem A}
\newtheorem*{Theorem A'}{Theorem A'}

\newtheorem*{Conj*}{Conjecture}

\newtheorem{Definition}[Theorem]{Definition}
\newtheorem{Proposition}[Theorem]{Proposition}
\newtheorem{Lemma}[Theorem]{Lemma}

\newtheorem*{Remark}{Remark}

\newtheorem{Remark-numbered}[Theorem]{Remark}
\newtheorem{Remarks-numbered}[Theorem]{Remarks}

\newtheorem*{Claim}{Claim}
\newtheorem{Claim-numbered}{Claim}

 \def\NN{{\mathbb N}} 

 \def\RR{{\mathbb R}}

   \def\cN{{\cal N}} 
    
   \def\cP{{\cal P}} 
\def\cE{{\cal E}}    
\def\cF{{\cal F}}

\def\triangleq{\stackrel{\triangle}{=}}

\setcounter{tocdepth}{2}
\makeatletter
\newcommand{\subsectionruninhead}{\@startsection{subsection}{2}{0mm}{-\baselineskip}{-0mm}{\bf\large}}
\newcommand{\subsubsectionruninhead}{\@startsection{subsubsection}{3}{0mm}{-\baselineskip}{-0mm}{\bf\normalsize}}
\makeatother

\begin{document}
 \title{On the growth rate of periodic orbits for vector fields}
 \author{Wanlou Wu \and Dawei Yang \and Yong Zhang\footnote{
We were partially supported by NSFC 11671288 and A Project Funded by the Priority Academic Program Development of Jiangsu Higher Education Institutions(PAPD).}}
\maketitle

\begin{abstract}
   We establish the relationship between the growth rate of periodic orbits and the topological entropy for $C^1$ generic vector fields: this extends a classical result of Katok for $C^{1+\alpha}(\alpha>0)$ surface diffeomorphisms to $C^1$ generic vector fields of any dimension. The main difficulty comes from the existence of singularities and the shear of the flow.
\end{abstract}
	
\section{Introduction}
   For an Axiom A system $f$, Bowen \cite{Bowen1} proved that the upper limit of the growth rate of periodic points is equal to its topological entropy, namely,
 $$\limsup_{n\to \infty}\frac{1}{n}\log\#P_n(f)=h_{top}(f),$$
  where $P_n(f)=\{x\in M: x=f^n(x)\}$, $h_{top}(f)$ is the topological entropy of $f$. Katok \cite[Theorem 4.3]{Katok} showed that for a $C^{1+\alpha}(\alpha>0)$ diffeomorphism $f$ on a compact manifold and any $f$-invariant Borel probability measure with non-zero Lyapunov exponents, the upper limit of the growth rate of periodic points for $f$ is larger than or equal to its metric entropy, i.e., $$\limsup_{n\to \infty}\frac{1}{n}\log\#  P_n(f)\geq h_\mu(f),~{\rm where \ }\mu {\rm \ is \ a \ hyperbolic \ measure}.$$ In particular, if $f$ is a $C^{1+\alpha}$ surface diffeomorphism,  then one has $$\limsup_{n\to \infty}\frac{1}{n}\log \# P_n(f)\geq h_{top}(f).$$
     
   In this paper, we will consider the case of vector fields. Assume that $M$ is a boundary-less compact smooth Riemannian manifold and $\mathcal {X}^1(M)$ is the space of all $C^1$ vector fields on $M$ with the $C^1$ norm. Note that ${\cal X}^1(M)$ is a Banach space. A vector field $X\in \mathcal {X}^1(M)$ generates a flow $\varphi_t=\varphi^X_t$. Suppose that $[x] \triangleq \{ y\in M: y \in {\rm Orb}(x)\}$. Let 
$$\#P_T(X)=\sum_{[x]\in P_T(X)}\pi(x),$$ 
where $\pi(x)$ is the minimal period of $x$ and $P_T(X)=\{[x]\subset M: 0<\pi(x)\leq T\}$.

\begin{theoremalph}\label{Thm:main}
   There is a dense $G_\delta$ set ${\mathcal R}\subset{\cal X}^1(M)$ such that for any $X \in {\mathcal R}$, one has $$\limsup_{T\to \infty}\frac{1}{T}\log \#P_T(X)\geq h_{top}(X):=h_{top}(\varphi_1).$$
\end{theoremalph}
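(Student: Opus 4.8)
The plan is to reduce the statement, via the variational principle and a Baire-category argument, to a single perturbative assertion: \emph{given a flow-ergodic measure $\mu$ with $h:=h_\mu(\varphi_1)>0$ and given $\eta>0$, one can make an arbitrarily $C^1$-small perturbation of $X$ to a field $Y$ having at least $e^{T(h-\eta)}$ distinct periodic orbits of minimal period $\le T$, for some arbitrarily large $T$.} Granting this, the variational principle together with the ergodic decomposition and affinity of entropy give $h_{top}(X)=h_{top}(\varphi_1)=\sup\{h_\mu(\varphi_1):\mu\text{ flow-ergodic}\}$, so one may choose $\mu$ with $h_\mu(\varphi_1)>h_{top}(X)-\eta$; since the periodic orbits produced can be taken hyperbolic the inequality $\tfrac1T\log\#P_T(\cdot)>h_{top}(X)-2\eta$ then holds on a nonempty $C^1$-open set, and intersecting such open sets over a dense choice of base fields and over $\eta=1/m\downarrow 0$ yields the residual set $\cR$ (the only bookkeeping subtlety, that $X\mapsto h_{top}(X)$ need not be continuous, is absorbed in the usual way by phrasing the target through ergodic-measure entropies of the perturbations rather than through $h_{top}$ directly). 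Observe that any flow-ergodic $\mu$ with $h_\mu(\varphi_1)>0$ has $\mu(\Sing(X))=0$, since $\Sing(X)$ is flow-invariant and $\varphi_1$ is the identity on it, so a measure carried by singularities has zero entropy.

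\emph{Producing exponentially many separated, recurrent orbit segments.} Fix such a $\mu$. Applying the Brin--Katok local entropy formula (equivalently, Shannon--McMillan--Breiman with a generating partition) to the homeomorphism $\varphi_1$ gives, for all small $\delta>0$ and all large $n$, an $(n,\delta)$-separated set $E_n$ of $\mu$-typical points with $\#E_n\ge e^{n(h-\eta/2)}$. By Poincaré recurrence each $x\in E_n$ returns within $\delta/4$ of itself along the flow at a set of times of positive lower density (quantified through Kac's lemma), so a pigeonhole argument produces one large $n$ and a subcollection $E_n'\subset E_n$, still with $\#E_n'\ge e^{n(h-\eta)}$, such that every $x\in E_n'$ satisfies $d(\varphi_{s(x)}x,x)<\delta/4$ for some transit time $s(x)\in[n,n+C]$; moreover the orbit segments $\{\varphi_tx:0\le t\le s(x)\}$, $x\in E_n'$, are pairwise $\delta$-separated in the relevant dynamical sense, which will force the periodic orbits built from them to be genuinely distinct (as in Katok's argument).

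\emph{Closing up, and the main obstacle.} Using a $C^1$ closing/connecting lemma for flows, each segment of $E_n'$ is turned into a true periodic orbit of period close to $s(x)$ by a perturbation supported in small flow boxes around its two nearly coincident endpoints; the segments being $\delta$-separated, these boxes are disjoint, so one global $C^1$-small perturbation $Y$ closes them all at once, yielding $\ge e^{n(h-\eta)}$ distinct hyperbolic periodic orbits and hence $\tfrac1{n+C}\log\#P_{n+C}(Y)\ge h-\eta-o(1)$. The two main difficulties, precisely those named in the abstract, enter here. First, \emph{singularities}: a segment may pass near $\Sing(X)$, where the closing lemma is delicate and, worse, a sequence of periodic orbits can accumulate on a singularity and shed period and entropy in the limit; one deals with this by working with the \emph{linear Poincaré flow} on the normal bundle over the regular set, and by first showing — using $\mu(\Sing(X))=0$, an escaping-time estimate near $\Sing(X)$ (shear control), and the $C^1$-generic impossibility of entropy being concentrated at singularities — that still-exponentially-many of the segments stay uniformly away from $\Sing(X)$. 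Second, \emph{shear}: for a flow the closing perturbation must pin down both endpoints \emph{and} the transit time, so one needs the period-controlled version of the closing lemma (Pugh--Robinson, Liao, Wen, Gan) rather than the original one, and the shadowing must be organised along the normal directions via the linear Poincaré flow so that drift along the flow does not fuse orbits that ought to be counted separately.

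I expect the genuine obstacle to be exactly this coupling of the singularity problem with period control: arranging that the exponentially many orbit segments furnished by the entropy of $\mu$ can be chosen to avoid a fixed neighbourhood of $\Sing(X)$ while remaining recurrent with uniformly bounded return time, and then closing them up simultaneously — this is where the linear Poincaré flow, Liao-type shadowing along quasi-hyperbolic strings, and the $C^1$-generic machinery for flows all have to be combined.
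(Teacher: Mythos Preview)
Your proposal has a genuine structural gap, and the paper's proof takes a different route.

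The core problem is the step ``the segments being $\delta$-separated, these boxes are disjoint, so one global $C^1$-small perturbation $Y$ closes them all at once.'' Being $(n,\delta)$-separated for $\varphi_1$ means only that any two orbits diverge at \emph{some} integer time in $[0,n]$; it says nothing about the base points $x\in E_n'$, which may all lie in a single ball of radius $\delta/100$. The flow boxes supporting the closing perturbations are therefore not disjoint, and a closing perturbation for one segment will in general destroy the recurrence of the others. This is not a technicality: the Pugh--Robinson closing lemma gives no control on orbits other than the one being closed, and there is no mechanism to close \emph{exponentially many} recurrent segments by a single $C^1$-small perturbation absent some hyperbolic or dominated structure. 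You correctly name Liao-type shadowing at the end as the right tool, but Liao's lemma requires a \emph{dominated} splitting of the normal bundle along the quasi-hyperbolic string, and nothing in your outline produces one: for a $C^1$ flow an arbitrary positive-entropy ergodic measure need not have any domination. (Your Baire bookkeeping also has a loose end---the target $h_{top}(X)$ moves with $X$ and is neither upper nor lower semicontinuous in $C^1$---but this is secondary to the closing issue.)

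The paper sidesteps all of this via a dichotomy absent from your plan: a generic $X$ is either \emph{star} (every critical element of every $C^1$-nearby field is hyperbolic) or not. If $X$ is not star, no entropy argument is needed: perturbing a non-hyperbolic periodic orbit creates arbitrarily many periodic orbits of comparable period, and a routine Baire argument gives growth rate $+\infty$. If $X$ is star, Shi--Gan--Wen shows every regular ergodic measure is hyperbolic, and Liao's uniform estimates for star flows (transferred to the measure via the ergodic closing lemma and the extended linear Poincar\'e flow) force the hyperbolic Oseledec splitting on $\mathcal N$ to be \emph{dominated}. With domination in hand, Liao's shadowing lemma---sharpened in the paper by an explicit period-control estimate $|\theta(T)-T|\le N\cdot d(x,\varphi_T x)$---closes each quasi-hyperbolic recurrent string to a periodic orbit \emph{of $X$ itself}. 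A Katok-style $(n,\varepsilon)$-separated set inside a Pesin block then yields exponentially many distinct periodic orbits of $X$: no perturbation, no simultaneous closing, and no comparison of $h_{top}$ across different vector fields.

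So the missing idea is not primarily the singularity/shear coupling you anticipate, but the star dichotomy: it is what supplies the domination that makes Liao shadowing---and hence the whole Katok counting argument---available at the $C^1$ level.
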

    
   One of the main difficulties for proving Theorem \ref{Thm:main} is the presence of singularities. Flows with singularities have rich and complicated dynamics such as the \emph{Lorenz attractor} \cite{Lor63,Guc76}. At singularities, one can not define the \emph{linear Poincar\'e flow} (see Definition \ref{Def:linearPoincare}). Hence we lose some compact properties. Even there is no singularities, we are not able to use the usual Pesin theory as in Lian and Young \cite{Lian} since the vector field is only $C^1$. Additionally, one may have ``shear'' for flows. This is sharp in this work since we have to control the periods by the nature of this work. We introduce the outline of proof on Theorem \ref{Thm:main}:
\begin{itemize}
   \item[---] If a generic vector field is star (see Definition \ref{Def:star}), every regular ergodic measure (see Definition \ref{Def:hyperbolicmeasure}) of this vector field is hyperbolic from Shi-Gan-Wen \cite[Theorem E]{SGW14}. Then by applying a shadowing lemma, we show the relationship between the upper limit of the growth rate and the metric entropy for the regular measure.
   \item[---] If a generic vector field is not star, we prove that  the upper limit of the growth rate of periodic orbits is infinity, hence larger than the topological entropy.    
\end{itemize}

\section{Preliminaries}\label{Sec:LPF}
   Given $X\in \mathcal {X}^1(M)$, a point $\sigma\in M$ is a \emph{singularity} if $X(\sigma) = 0$. Denote by ${\rm Sing}(X)$ the set of singularities. A point $x$ is \emph{regular} if $X(x)\neq 0$. A regular point $p$ is \emph{periodic}, if $\varphi^X_{t_0}(p) = p$ for some $t_0>0$. A critical point is either a singularity or a periodic point. Denote the \emph{normal bundle} of $X$ by $$\mathcal {N}^X \triangleq \bigcup\limits_{x\in M \setminus {\rm Sing}(X)}\mathcal N_x,~~~\textrm{where}~\mathcal {N}_{x}\triangleq\{v\in T_{x}M : v\perp X(x)\}.$$

   For the flow $\varphi^X_t$ generated by $X$, its derivative with respect to the space variable is called the \emph{tangent flow} and is denoted by ${\rm d}\varphi^X_t$.
\begin{Definition}\label{Def:linearPoincare}
   Given $x\in M \setminus {\rm Sing}(X), v\in \mathcal {N}_x$ and $t\in \mathbb{R}$, the \emph{{linear Poincar\'e flow}} $\psi^X_t(v)$ is the orthogonal projection of $\varphi^X_t(v)$ on $\mathcal {N}_{\varphi^X_t(x)}$ along the flow direction $X(\varphi^X_t(x))$, $$\psi^X_t(v)\triangleq   {\rm d}\varphi^X_t(v)-\frac{\langle {\rm d}\varphi^X_t(v),X(\varphi^X_t(x)) \rangle}{\parallel X(\varphi^X_t(x))\parallel^2} X(\varphi^X_t(x)),$$ and the \emph{{rescaled linear Poincar\'e flow}} $\psi_t^*$ is $$\psi^*_t(v) \triangleq \frac{\parallel X(x)\parallel}{\parallel X(\varphi^X_t(x)) \parallel}\psi^X_t(v)=\frac{\psi^X_t(v)}{\parallel {\rm d}\varphi^X_t|_{< X(x)>}\parallel}.$$
\end{Definition}  
   Let $\mu$ be an invariant measure of $\varphi_t$ which is not concentrated on ${\rm Sing}(X)$, for the \emph{linear Poincar\'e flow} $\psi_t:\mathcal {N}\rightarrow\mathcal {N}$. by the Oseledec Theorem \cite[Theorem S.2.9]{Katok1}, for $\mu$-a.e. $x$, there is a measurable splitting $\mathcal{N}_x=\bigoplus\limits^{k(x)}_{i=1} \cE_i(x)$ and numbers $\lambda_1(x)<\lambda_2(x)<\cdots<\lambda_{k(x)}(x)$, such that $$\lim_{t\to\infty}\dfrac{1}{t}\log\parallel \psi_t(v) \parallel=\lambda_i(x),~\forall~v\in \cE_i(x)\setminus \{0\},~i=1,2,\dots,k(x),~{\rm where \ } 1\leq k(x)\leq d-1.$$ These quantities are called the \emph{Lyapunov exponents} at point $x$ of $\psi_t$ and the sub-bundle $\cE_i(x)$ is called the Oseledec subspace of $\lambda_i(x)$.
    
\begin{Definition}\label{Def:hyperbolicmeasure}
   An ergodic measure $\mu$ of the flow $\varphi_t$ is \emph{regular} if it is not supported on a singularity. A regular ergodic measure is \emph{hyperbolic}, if the Lyapunov exponents of the \emph{linear Poincar\'e flow} $\psi_t$ are all non-zero.
\end{Definition}

\begin{Remark}
   We can also define the hyperbolicity of an ergodic measure by using the tangent flow ${\rm d}\varphi_t$ as usual. However, for ergodic measures that are not supported on singularities, there will be one zero Lyapunov exponent for the tangent flow along the flow direction. 
\end{Remark}

   For a regular hyperbolic ergodic measure $\mu$, we can rewrite the splitting $\mathcal{N}=\bigoplus\limits^k_{i=1} \cE_i,~(1\leq k\leq d-1)$ as $\mathcal{N}=\mathcal{E}^s\oplus\mathcal{F}^u$, where all the Lyapunov exponents along $\mathcal{E}^s$ are negative and all the Lyapunov exponents along $\mathcal{F}^u$ are positive. We call the splitting $\mathcal{N}=\mathcal{E}^s\oplus\mathcal{F}^u$ the \emph{{hyperbolic Oseledec splitting}} w.r.t. the hyperbolic ergodic measure $\mu.$

\begin{Definition}\label{Def:dominated-poincare}
   Let $\Lambda\subset M\setminus{\rm Sing}(X)$ be an invariant (not necessarily compact) set. An invariant splitting $\mathcal{N}_\Lambda=\mathcal{E}\oplus\mathcal{F}$ w.r.t. the linear Poincar\'e flow $\psi_t$ is \emph{dominated}, if there are $C>0$ and $\eta>0$ such that for any $x\in \Lambda$ and any fixed $t> 0$, one has $$\parallel\psi_t |\mathcal{E}_x \parallel\cdot \parallel \psi_{-t}|\mathcal{F}_{\varphi_t(x)}\parallel\leq C {\rm e}^{-\eta t}.$$
\end{Definition}

   The linear Poincar\'e flow $\psi_t$ loses the compactness due to the existence of singularities. This difficulty can be overcome by extending the linear Poincar\'e flow (see \cite{LGW05}). For understanding the accumulation directions on singularity, one has to define the \emph{transgression} of the tangent flow ${\rm d}\varphi^X_t$: for the sphere bundle $SM=\{v\in TM:~\|v\|=1\}$, the map $$(t,v)\rightarrow \frac{{\rm d}\varphi^X_t(v)}{\|{\rm d}\varphi^X_t(v)\|},~~~t\in\RR,~v\in SM$$ defines a flow on $SM$. For any point $v\in SM$, one can define a fiber $\cN_v=\{u\in TM:~u\perp v\}$ and then define a bundle $\cN SM$. Then one can consider another bundle on $M$: $$NSM=\{(v_1,v_2):~v_1\in S_xM,~v_2\in T_xM,~v_1\perp v_2\},$$ and define the following flow on $NSM$ after Liao:
\begin{eqnarray*}
   \Theta_t(v_1,v_2)&=&({\rm d}\varphi^X_t(v_1),{\rm d}\varphi^X_t(v_2)-\frac{\langle {\rm d}\varphi^X_t(v_1),{\rm d}\varphi^X_t(v_2) \rangle}{\parallel {\rm d}\varphi^X_t(v_1)\parallel^2} {\rm d}\varphi^X_t(v_1))\\&=& ({\rm Proj}_S(\Theta_t),{\rm Proj}_N(\Theta_t))
\end{eqnarray*} 
   The linear Poincar\'e flow $\psi_t$ can be ``embedded'' in the flow $\Theta_t$. In fact, if one considers the subsets $\{(X(x)/\lVert X(x)\lVert,v)\}\subset NSM$, then for any regular point $x\in M$ and any $v\in \mathcal{N}_x$, one has $\psi^X_t(v)={\rm Proj}_N\Theta_t(X(x)/\|X(x)\|,v)$. 
 
   For a compact invariant set $\Lambda$, its \emph{transgression} $\widetilde\Lambda$ is defined by $${\widetilde\Lambda}={\rm Closure}\{X(x)/\|X(x)\|:~x\in\Lambda\setminus{\rm Sing}(X)\}~\textrm{in}~SM.$$ Thus, ${\rm Proj}_N\Theta_t$ is a continuous flow on $\widetilde\Lambda$. The \emph{extended linear Poincar\'e flow} $\widetilde\psi$  on ${\cal N}_{\widetilde\Lambda}SM$ is defined as the compactification of the fibered flow ${\rm Proj}_N\Theta_t$ on $\bigcup_{x\in\Lambda\setminus{\rm Sing}(X)}\{X(x)/\|X(x)\|\}$ over the base flow ${\rm Proj}_S\Theta_t$. 
         
\begin{Lemma}\label{lemma: continuous of elp} \cite[Lemma 3.1]{LGW05}
   The extended linear Poincar\'e flow $\widetilde\psi_t^X(v)$ varies continuously w.r.t. the vector field $X$, the time $t$ and the vector $v$. 
\end{Lemma}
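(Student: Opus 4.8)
The plan is to reduce the statement to the classical theorem on continuous dependence of solutions of ordinary differential equations on initial conditions and parameters. Since $X$ is $C^1$, the tangent flow $t\mapsto{\rm d}\varphi^X_t(v)$ is the solution, with initial value $v$ at time $0$, of the linear variational equation $\dot w=DX(\varphi^X_t(x))\,w$ along the orbit of $x$; here compactness of $M$ guarantees completeness, so ${\rm d}\varphi^X_t$ is defined for all $t\in\RR$. The coefficient $t\mapsto DX(\varphi^X_t(x))$ of this nonautonomous linear equation depends continuously, uniformly on compact time intervals, on the triple $(X,t,x)\in\cX^1(M)\times\RR\times M$: indeed $X\mapsto DX$ is continuous from the $C^1$ topology to the $C^0$ topology, and $(X,t,x)\mapsto\varphi^X_t(x)$ is itself continuous by continuous dependence applied to $\dot x=X(x)$. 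Hence, by continuous dependence for linear ODEs on their coefficients and initial data, the map $(X,t,v)\mapsto{\rm d}\varphi^X_t(v)$ is continuous on $\cX^1(M)\times\RR\times TM$.

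Next, the flow $\Theta_t$ on $NSM$ is obtained from ${\rm d}\varphi^X_t$ by the explicit formula in its definition: the induced base flow ${\rm Proj}_S\Theta_t$ on $SM$ is the normalization $v_1\mapsto{\rm d}\varphi^X_t(v_1)/\|{\rm d}\varphi^X_t(v_1)\|$, while the fibre component ${\rm Proj}_N\Theta_t$ subtracts from ${\rm d}\varphi^X_t(v_2)$ its orthogonal projection onto ${\rm d}\varphi^X_t(v_1)$. The only denominators occurring are positive powers of $\|{\rm d}\varphi^X_t(v_1)\|$, and since $v_1$ is a unit vector and ${\rm d}\varphi^X_t$ is a linear isomorphism of $T_xM$, this quantity is strictly positive; being moreover continuous in $(X,t,v_1)$ by the previous paragraph, it stays bounded away from $0$ near any prescribed parameter value. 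Therefore $\Theta_t$, and hence ${\rm Proj}_N\Theta_t$, is jointly continuous in $(X,t,(v_1,v_2))$ on $\cX^1(M)\times\RR\times NSM$, as a bundle map over the likewise continuous base flow ${\rm Proj}_S\Theta_t$.

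Finally, by construction the extended linear Poincar\'e flow $\widetilde\psi^X_t$ is simply the restriction of the globally defined map ${\rm Proj}_N\Theta_t$ to the subbundle $\cN_{\widetilde\Lambda}SM$ over the transgression $\widetilde\Lambda\subset SM$: on the dense set $\{X(x)/\|X(x)\|:x\in\Lambda\setminus\Sing(X)\}$ it equals ${\rm Proj}_N\Theta_t$ applied to $(X(x)/\|X(x)\|,v)$, and the compactification used to define $\widetilde\psi$ merely takes the closure of the domain without changing any values, because ${\rm Proj}_N\Theta_t$ is already continuous on the ambient bundle $\cN SM$ over the compact manifold $SM$. Continuity of $\widetilde\psi^X_t(v)$ in $(t,v)$ is then immediate, and the dependence on $X$ follows because every $\widetilde\psi^X$ is a restriction of ${\rm Proj}_N\Theta^X$, and $X\mapsto{\rm Proj}_N\Theta^X$ is continuous on the fixed ambient bundle $\RR\times\cN SM$.

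I expect the only delicate point to be the bookkeeping in this last step: making precise what convergence $\widetilde\psi^{X_n}_t(v_n)\to\widetilde\psi^X_t(v)$ should mean when $X_n\to X$ and the transgressions $\widetilde\Lambda_n$ vary, which is resolved by regarding all of these flows as restrictions of the one continuous map ${\rm Proj}_N\Theta_t$ on $\cX^1(M)\times\RR\times\cN SM$. Everything else is the standard Gronwall-type continuous-dependence estimate together with the observation that the projection formulas defining $\Theta_t$ have no singularities, since ${\rm d}\varphi^X_t$ never annihilates a nonzero vector.
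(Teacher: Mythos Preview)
Your argument is correct and follows the same route as the paper: both reduce the continuity of $\widetilde\psi^X_t$ to the continuity of $\Theta_t$ in $(X,t,v)$, with your version supplying the details (variational equation, nonvanishing denominators, restriction to the closure) that the paper leaves implicit in a one-line proof.
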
 

\begin{proof}
   This follows from the fact that the flow $\Theta_t$ is continuous w.r.t. $X$, $\RR$ and $NSM$.
\end{proof}

\begin{Lemma}\label{Lem:dominated-splitting-closure}
   If $\mathcal{N}_{\Lambda\setminus{\rm Sing}(X)}=\mathcal{E}\oplus\mathcal{F}$ is a dominated splitting w.r.t. the linear Poincar\'e flow $\psi_t$ on an invariant set $\Lambda$, then the extended linear Poincar\'e flow $\widetilde{\psi}_t$ has dominated splitting $\mathcal{N}_{\widetilde{\Lambda}}SM=\widetilde{\mathcal{E}}\oplus\widetilde{\mathcal{F}}$ and the bundles $\widetilde{\mathcal{E}},~ \widetilde{\mathcal{F}}$ are continuous on $\widetilde\Lambda$.  
\end{Lemma}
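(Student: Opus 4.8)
The plan is to transport the dominated splitting to the dense subset of $\widetilde\Lambda$ made of normalized directions of regular points of $\Lambda$, and then to extend it to the whole compact set $\widetilde\Lambda$ by a limiting argument, the two inputs being the continuity of $\widetilde\psi$ (Lemma \ref{lemma: continuous of elp}) together with the uniform transversality and the uniqueness of dominated splittings.

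First, $\widetilde\Lambda$ is compact, being closed in the compact sphere bundle $SM$, and the map $\iota\colon x\mapsto X(x)/\|X(x)\|$ sends $\Lambda\setminus{\rm Sing}(X)$ bijectively onto a dense subset $\Lambda_0\subset\widetilde\Lambda$ (the footpoint of $\iota(x)$ recovers $x$). By construction of $\Theta_t$ one has $\iota\circ\varphi_t={\rm Proj}_S\Theta_t\circ\iota$ on $\Lambda_0$, and ${\rm Proj}_N\Theta_t=\widetilde\psi_t$ restricted to $\Lambda_0$ corresponds to $\psi_t$ via the identity $\psi^X_t(v)={\rm Proj}_N\Theta_t(X(x)/\|X(x)\|,v)$. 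Hence, putting $\widetilde{\mathcal{E}}_{\iota(x)}=\mathcal{E}_x$ and $\widetilde{\mathcal{F}}_{\iota(x)}=\mathcal{F}_x$ defines a dominated splitting $\mathcal{N}_{\Lambda_0}SM=\widetilde{\mathcal{E}}\oplus\widetilde{\mathcal{F}}$ for $\widetilde\psi_t$ over $\Lambda_0$, with the same constants $C,\eta$. Working separately on the relatively clopen subsets of $\Lambda_0$ on which $\dim\widetilde{\mathcal{E}}$ is constant, we may assume that dimension equals a fixed integer $i$.

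To extend, fix $\tilde v\in\widetilde\Lambda$ and $v_n\in\Lambda_0$ with $v_n\to\tilde v$. Since the Grassmannian bundle of $i$-planes over $\widetilde\Lambda$ inside $\mathcal{N}_{\widetilde\Lambda}SM$ is compact, along a subsequence $\widetilde{\mathcal{E}}_{v_n}\to E$ and $\widetilde{\mathcal{F}}_{v_n}\to F$ with $E,F\subset\mathcal{N}_{\tilde v}$. A dominated splitting is uniformly transverse (the angle between $\widetilde{\mathcal{E}}$ and $\widetilde{\mathcal{F}}$ along $\Lambda_0$ is bounded below in terms of $C$, $\eta$ and $X$, using that $\|\widetilde\psi_{\pm1}\|$ is bounded on the compact $SM$), so $\mathcal{N}_{\tilde v}=E\oplus F$; and passing to the limit in the domination inequality, which is legitimate because $\widetilde\psi_t$ is continuous in the vector, shows $E\oplus F$ satisfies the same estimate. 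Two such limits are dominated splittings of index $i$, with the same constants, over the $\widetilde\psi$-orbit closure of $\tilde v$; by uniqueness of dominated splittings of prescribed index they coincide, so the limit is independent of the approximating sequence. This well-defines $\widetilde{\mathcal{E}},\widetilde{\mathcal{F}}$ on all of $\widetilde\Lambda$, with $\mathcal{N}_{\widetilde\Lambda}SM=\widetilde{\mathcal{E}}\oplus\widetilde{\mathcal{F}}$ dominated for $\widetilde\psi_t$.

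Finally, invariance of $\widetilde{\mathcal{E}},\widetilde{\mathcal{F}}$ follows by approximating $\tilde v$ by points of $\Lambda_0$, where invariance holds, and using continuity of $\widetilde\psi_t$ plus the uniqueness just proved; and continuity of $\widetilde{\mathcal{E}},\widetilde{\mathcal{F}}$ on $\widetilde\Lambda$ is a standard diagonal argument — if $\tilde v_n\to\tilde v$ with $\widetilde{\mathcal{E}}_{\tilde v_n}\to L\ne\widetilde{\mathcal{E}}_{\tilde v}$ along a subsequence, choose $w_n\in\Lambda_0$ with $w_n\to\tilde v$ and $\widetilde{\mathcal{E}}_{w_n}$ close to $\widetilde{\mathcal{E}}_{\tilde v_n}$ to produce an approximating sequence in $\Lambda_0$ with $\widetilde{\mathcal{E}}$-limit $L$, contradicting well-definedness. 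I expect the main obstacle to be precisely this well-definedness of the extension; it reduces to two structural properties of dominated splittings — uniform transversality, so that fiberwise limits stay genuine direct sums, and uniqueness in a fixed index — now applied to the fibered flow $\widetilde\psi_t$ over the base flow on $\widetilde\Lambda\subset SM$ in place of $\psi_t$ over $M$.
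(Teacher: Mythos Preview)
Your proposal is correct and follows essentially the same approach as the paper: lift the splitting via $\iota$ to the dense subset $\Gamma=\Lambda_0\subset\widetilde\Lambda$, observe that the same domination inequality holds there, and then extend to the closure $\widetilde\Lambda$. The paper's proof simply asserts the last step (``these two bundles can be extended on the closure \ldots\ in a unique and continuous way''), whereas you have spelled out the standard mechanism behind it --- Grassmannian compactness, uniform transversality from domination plus boundedness of $\widetilde\psi_{\pm 1}$, and uniqueness of dominated splittings of a given index --- which is exactly the content the paper is implicitly invoking.
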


\begin{proof}
   By the definition of dominated splitting, there are $C>0$ and $\eta>0$ such that for any $x\in \Lambda\setminus{\rm Sing}(X)$ and any fixed $t>0$, one has 
\begin{eqnarray}
   \parallel\psi_t |\mathcal{E}_x \parallel\cdot \parallel \psi_{-t}|\mathcal{F}_{\varphi_t(x)}\parallel\leq C {\rm e}^{-\eta t}.\label{e.dominated-for-extended}
\end{eqnarray}
   Thus, on the set $\Gamma=\{X(x)/\|X(x)\|:~x\in\Lambda\setminus{\rm Sing}(X)\}\subset SM$, the lifts of $\cE$ and $\cF$ in $\cN_\Gamma SM$ still satisfy the inequality (\ref{e.dominated-for-extended}). Thus these two bundles can be extended on the closure of $\Gamma$, which is $\widetilde\Lambda$, in a unique and continuous way.  
\end{proof}

\begin{Lemma}\label{ms}[Definition of the transgression of a measure]
   If $\mu$ is an ergodic $\varphi_t$-invariant measure on $M$ with $\mu({\rm Sing}(X))=0$, then there is  an ergodic ${\rm Proj}_S\Theta_t$-invariant measure $\widetilde{\mu}$ on $\widetilde\Lambda$, the transgression of $\Lambda=Supp(\mu)$, such that the Lyapunov exponents of flow $\widetilde\psi_t$ w.r.t. the measure $\widetilde{\mu}$ are the same as the Lyapunov exponents of the linear Poincar\'e flow $\psi_t$ w.r.t. the measure $\mu$. The measure $\widetilde\mu$ is called the \emph{transgression} of $\mu$.  
\end{Lemma}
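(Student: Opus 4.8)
The plan is to produce $\widetilde\mu$ as the push‑forward of $\mu$ under the ``normalized direction'' map, and to exploit that over regular directions the extended linear Poincar\'e flow $\widetilde\psi_t$ coincides verbatim with the linear Poincar\'e flow $\psi_t$. First I would consider the smooth map $\iota\colon M\setminus\Sing(X)\to SM$, $\iota(x)=X(x)/\|X(x)\|$. Since $X$ generates $\varphi_t$ we have $\mathrm{d}\varphi^X_t(X(x))=X(\varphi_t(x))$, whence $\mathrm{Proj}_S\Theta_t(\iota(x))=\mathrm{d}\varphi^X_t(X(x))/\|\mathrm{d}\varphi^X_t(X(x))\|=\iota(\varphi_t(x))$; that is, $\iota$ intertwines $\varphi_t$ with $\mathrm{Proj}_S\Theta_t$ on its domain, a set of full $\mu$-measure because $\mu(\Sing(X))=0$. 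By the definition of the transgression, $\iota(\Lambda\setminus\Sing(X))\subset\widetilde\Lambda$ with $\Lambda=\mathrm{Supp}(\mu)$.

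Next I would set $\widetilde\mu:=\iota_*\mu$. Since $\widetilde\Lambda$ is closed and $\mu(\iota^{-1}(\widetilde\Lambda))\ge\mu(\Lambda\setminus\Sing(X))=1$, this is a Borel probability measure supported in $\widetilde\Lambda$. Its $\mathrm{Proj}_S\Theta_t$-invariance is immediate from the intertwining identity $\iota\circ\varphi_t=\mathrm{Proj}_S\Theta_t\circ\iota$ (which holds $\mu$-a.e., using once more $\mu(\Sing(X))=0$ and $\varphi_t$-invariance of $\mu$): for Borel $A\subset SM$, $\widetilde\mu((\mathrm{Proj}_S\Theta_t)^{-1}A)=\mu(\varphi_t^{-1}\iota^{-1}A)=\mu(\iota^{-1}A)=\widetilde\mu(A)$. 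Ergodicity follows because the $\iota$-preimage of a $\mathrm{Proj}_S\Theta_t$-invariant set is ($\mu$-essentially) $\varphi_t$-invariant, hence of $\mu$-measure $0$ or $1$.

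It remains to match the Lyapunov exponents, and here the identity $\psi^X_t(v)=\mathrm{Proj}_N\Theta_t(X(x)/\|X(x)\|,v)$ recorded before the Lemma does the work: for a regular $x$ the fiber $\mathcal N_{\iota(x)}SM=\{u\in T_xM:u\perp X(x)\}$ is canonically $\mathcal N_x$, and under this identification the cocycle $\widetilde\psi_t$ over $\iota(x)$ is the cocycle $\psi^X_t$ over $x$. Since $\widetilde\mu$-a.e.\ point is $\iota(x)$ for a $\mu$-generic regular $x$, the Oseledec splittings and the growth rates $\lim_{t\to\infty}t^{-1}\log\|\widetilde\psi_t(v)\|$ for $\widetilde\mu$ agree fiberwise with those of $\psi_t$ for $\mu$, and ergodicity of both measures upgrades this to equality of the full Lyapunov spectra, with multiplicities. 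The Oseledec theorem applies to $\widetilde\psi_t$ for free: by Lemma \ref{lemma: continuous of elp} it is continuous over the compact base $\widetilde\Lambda$, so $\log^+\|\widetilde\psi_{\pm1}\|$ is bounded. I do not expect a genuine obstacle here: the only points to watch are that $\iota$ need not be injective (so $(\mathrm{Proj}_S\Theta_t,\widetilde\mu)$ is merely a measurable factor of $(\varphi_t,\mu)$, not a conjugacy, which is harmless) and that one must consistently discard the $\mu$-null set $\Sing(X)$ together with its $\varphi_t$-preimages whenever the intertwining identity is invoked.
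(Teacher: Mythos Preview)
Your proposal is correct and follows essentially the same approach as the paper: the paper also defines $\widetilde\mu$ as the push-forward of $\mu$ under the section $x\mapsto X(x)/\|X(x)\|$ (albeit with somewhat awkward notation via the base projection $\mathbb{P}$), verifies invariance and ergodicity from the intertwining relation, and then reads off the equality of Lyapunov exponents from the identity $\psi^X_t(v)=\mathrm{Proj}_N\Theta_t(X(x)/\|X(x)\|,v)$. Your write-up is in fact cleaner, particularly in making the intertwining $\iota\circ\varphi_t=\mathrm{Proj}_S\Theta_t\circ\iota$ explicit and in noting that integrability of the cocycle for Oseledec comes from Lemma~\ref{lemma: continuous of elp}.
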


\begin{proof}
   Let $\mathbb{P}:TM\rightarrow M$ be the projection which is a continuous surjection with $\mathbb{P}(v)=x$ for any vector $v\in T_xM$. Take the measure $\widetilde{\mu}=(\mathbb{P}\arrowvert_{\widetilde\Lambda})_*\mu$ on $\widetilde\Lambda$. For any Borel set $A\subset\widetilde\Lambda$ with $\widetilde{\mu}(\Phi_{-t}(A))=\widetilde{\mu}(A)$ for every $t\in\mathbb{R}$, one has $\mu(\mathbb{P}({\rm Proj}_S\Theta_{-t}(A)))=\mu(\mathbb{P}A)$ for $\forall~t\in\mathbb{R}$. Since $\mu$ is an ergodic $\varphi_t$-invariant measure, $\widetilde{\mu}(A)=\mu(\mathbb{P}A)=0 {\rm \ or \ }1$. It means that $\widetilde{\mu}$ is an ergodic ${\rm Proj}_S\Theta_t$-invariant measure.
  
   Applying the Oseledec Theorem to the linear Poincar\'e flow $\psi_t:\mathcal{N}_\Lambda\rightarrow\mathcal{N}_\Lambda$, for $\mu$-a.e. $x$, there is splitting $\mathcal{N}_x =\cE_1\oplus \cE_2 \oplus\cdots\oplus \cE_m$ and quantities $\lambda_1<\lambda_2<\cdots<\lambda_m$ such that $$\lim_{t\to\infty}\dfrac{1}{t}\log\parallel \psi_t(v) \parallel=\lambda_i,~\forall~v\in \cE_i\setminus \{0\},~i=1,2,\dots,m,\text{where $1\leq m\leq$ dim$(M)-1$}.$$ By the definition of the extend Poincar\'e flow, for the splitting $\mathcal{N}_x =\bigoplus_{i=1}^m \cE_i$, $\forall v\in \cE_i\setminus \{0\}$, one has $$\lim_{t\to\infty}\dfrac{1}{t}\log\parallel \widetilde\psi_t(v) \parallel=\lim_{t\to\infty}\dfrac{1}{t}\log \parallel {\rm Proj}_N\Theta_t(X(x)/\|X(x)\|,v) \parallel=\lim_{t\to\infty}\dfrac{1}{t}\log\parallel \psi_t(v) \parallel=\lambda_i.$$ Therefore, the Lyapunov exponents of $\widetilde\psi_t$ w.r.t. the measure $\widetilde{\mu}$ are the same as the Lyapunov exponents of the linear Poincar\'e flow $\psi_t$ w.r.t. the measure $\mu$.                
\end{proof}    

\section{The Reduction of Theorem \ref{Thm:main}}  

\subsection{Proof of Theorem \ref{Thm:main}}\label{Sec:reduction}

\begin{Definition}\label{Def:star}
   A vector field $X\in \mathcal{X}^1(M)$ is \emph{star}, if there is a $C^1$ neighborhood $\mathcal{U}$ of $X$ such that every critical element of any $Y\in \mathcal{U}$ is hyperbolic.
\end{Definition}
   For a generic non-star vector field, the growth rate of periodic orbits for this vector field is infinite. We postpone the proof of Theorem \ref{Thm:non-star-infinite} in Section \ref{Sec:generic-non-star}.     
\begin{Theorem}\label{Thm:non-star-infinite}
   There is a dense $G_\delta$ set ${\mathcal R}\subset{\cal X}^1(M)$ such that if $X\in\mathcal R$ is not star, then $$\limsup_{T\rightarrow\infty}\dfrac{1}{T}\log\#P_T(X)=+\infty.$$ 
\end{Theorem}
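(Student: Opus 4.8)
The plan is to exploit the standard dichotomy for $C^1$ vector fields that fail the star condition: a non-star vector field can be $C^1$-approximated by a vector field exhibiting a non-hyperbolic critical element, and then by Hayashi-type connecting lemma techniques (or the Franks-type perturbation lemmas for flows) one produces, after a further small perturbation, either a homoclinic tangency or a periodic orbit whose linear Poincar\'e return map has an eigenvalue on the unit circle in a robust enough way to allow unfolding. The key point, which I would isolate as a lemma, is that near such a configuration one can create a horseshoe-like family of periodic orbits whose periods grow only polynomially while their number grows exponentially — or, more to the point here, one can create periodic orbits of unboundedly many distinct periods concentrated in a bounded region of parameter space, forcing $\limsup_T \frac1T\log\#P_T(X)=+\infty$.

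Concretely, I would proceed as follows. First, define the relevant residual set: let $\mathcal{R}$ be the intersection of the Kupka-Smale set, the set where the map $X\mapsto\overline{\mathrm{Per}(X)}$ (or the relevant periodic-orbit data) is lower semicontinuous in the Hausdorff topology, and the residual set on which "every robust dynamical phenomenon that can be created by perturbation is already present" — the latter obtained by a standard Baire-category argument over a countable basis of open sets of $\mathcal{X}^1(M)$. Second, suppose $X\in\mathcal{R}$ is not star; then in every $C^1$-neighborhood of $X$ there is a $Y$ with a non-hyperbolic critical element $\gamma$. If $\gamma$ is a non-hyperbolic periodic orbit, a Franks-type perturbation of the linear Poincar\'e flow along $\gamma$ turns an eigenvalue of modulus one into either a weak saddle with a homoclinic connection or directly produces a small-period source/sink pair; iterating and using genericity, one shows $X$ itself must carry periodic orbits of arbitrarily large "multiplicity per unit period." If $\gamma$ is a non-hyperbolic singularity, one first uses a connecting lemma to create a homoclinic loop at $\gamma$ and then unfolds it — here the shear of the flow near the singularity is the delicate issue, but the Lorenz-like return map one obtains still yields exponentially many periodic orbits with controlled periods. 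Third, translate the "robustly many periodic orbits of controlled period" statement into the quantitative bound: if for infinitely many $n$ there are at least $e^{c_n n}$ periodic orbits of period $\le n$ with $c_n\to\infty$, then $\limsup_T \frac1T\log\#P_T(X)=+\infty$; the definition $\#P_T(X)=\sum_{[x]\in P_T(X)}\pi(x)$ only helps (each orbit is counted with weight $\ge 1$).

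The main obstacle I expect is the singularity case: controlling periods near a non-hyperbolic singularity while producing many periodic orbits. Unlike the diffeomorphism setting, a homoclinic loop at a singularity can have return times that blow up logarithmically, so one must be careful that the exponential proliferation of orbits is not exactly cancelled by a matching growth in periods. I would handle this by working with the rescaled linear Poincar\'e flow $\psi^*_t$ and the extended linear Poincar\'e flow $\widetilde\psi_t$ (Lemma \ref{lemma: continuous of elp} and Lemma \ref{Lem:dominated-splitting-closure}), which precisely isolate the hyperbolic behaviour in the normal direction from the shear; the transgression machinery (Lemma \ref{ms}) lets one phrase the eigenvalue conditions in a compact setting where perturbation lemmas apply cleanly. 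A secondary obstacle is making the Baire-category "transfer of robust phenomena" argument rigorous for flows rather than diffeomorphisms — one needs a countable family of open conditions whose realization is dense, and the non-star hypothesis must be shown to force membership in infinitely many of them; this is where the precise form of the Shi-Gan-Wen star theory and the $C^1$ connecting lemma for flows get used.
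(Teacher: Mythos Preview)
Your overall architecture (a Baire-category residual set plus ``robust phenomena transfer to the generic vector field'') matches the paper's, but the perturbation mechanism you propose is both far heavier than needed and contains a real gap. The paper does not build horseshoes, does not invoke connecting lemmas, and does not touch the extended linear Poincar\'e flow here at all. Its argument is elementary: if $Y$ near $X$ has a non-hyperbolic periodic orbit of period $T'$, a Franks-type perturbation makes $Y$ \emph{linear} in a flow-box neighbourhood of that orbit; the neutral eigenvalue of $\psi^Y_{T'}$ (real or rational-rotation after a further perturbation) then yields an entire arc or disk in a cross-section on which $\varphi^Y_{T'}$ (or $\varphi^Y_{kT'}$) is the identity. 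This is a \emph{continuum} of periodic orbits all with the \emph{same} period, so for any prescribed $N$ one simply picks $e^{NT'}$ of them. The generic transfer lemma (the paper's Lemma~\ref{L8}) is correspondingly simple: ``$X$ has $k$ hyperbolic periodic orbits with period in $(T,3T/2)$'' is an open condition, and density of this together with its negation gives the residual set.

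The gap in your route is that a homoclinic tangency or an unfolded horseshoe gives at most a \emph{fixed} exponential rate $\log k$ of periodic orbits, not an arbitrarily large one. Your target is $\limsup_T \tfrac1T\log\#P_T(X)=+\infty$, which requires, for every $N$, producing $\gtrsim e^{NT}$ orbits of period $\le T$ for some $T$. A single horseshoe, or a cascade coming from a single homoclinic unfolding, will never give $c_n\to\infty$ in your notation; you would have to argue that one can build horseshoes of arbitrarily high entropy near the non-hyperbolic orbit, and your sketch does not supply this. The ``identity on a disk'' mechanism sidesteps this entirely.

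Finally, your singularity worry is a red herring for this theorem. The paper works directly with a nearby non-hyperbolic \emph{periodic} orbit and never confronts a non-hyperbolic singularity; the transgression and extended-$\widetilde\psi_t$ machinery you invoke is used only in the \emph{star} case (Theorems~\ref{Thm:dominated-measure} and~\ref{Thm:measure-exponential}), not here.
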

    
   For star vector fields, we have two steps. First, based on that any regular ergodic measure of star vector field is hyperbolic, we show that the hyperbolic Oseledec splitting is a dominated splitting (Theorem \ref{Thm:dominated-measure}). Secondly, we prove that if the hyperbolic Oseledec splitting w.r.t. a regular hyperbolic measure is a dominated splitting, then the growth rate of periodic orbits is larger than or equal to the metric entropy (Theorem \ref{Thm:measure-exponential}).  

\begin{Theorem}\label{Thm:dominated-measure}
   If $\mu$ is a regular ergodic invariant measure of a $C^1$ star vector field $X$ with $h_\mu(X)>0$, then $\mu$ is a hyperbolic measure and its hyperbolic Oseledec splitting $\mathcal{N}=\mathcal{E}^s\oplus\mathcal{F}^u$ is a dominated splitting.  
\end{Theorem}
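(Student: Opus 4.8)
The plan is to prove Theorem \ref{Thm:dominated-measure} in two stages: first establish hyperbolicity of $\mu$, then promote the hyperbolic Oseledec splitting to a dominated splitting. The hyperbolicity is essentially a citation: since $X$ is star and $\mu$ is a regular ergodic measure, Shi-Gan-Wen \cite[Theorem E]{SGW14} gives that all Lyapunov exponents of the linear Poincar\'e flow $\psi_t$ with respect to $\mu$ are nonzero, so $\mu$ is hyperbolic in the sense of Definition \ref{Def:hyperbolicmeasure} and we may write $\mathcal{N}=\mathcal{E}^s\oplus\mathcal{F}^u$ with negative exponents on $\mathcal{E}^s$ and positive exponents on $\mathcal{F}^u$. (The hypothesis $h_\mu(X)>0$ is presumably there to guarantee $\mathcal{E}^s$ and $\mathcal{F}^u$ are both nontrivial via Ruelle's inequality applied to the Poincar\'e flow, although domination would hold vacuously otherwise.)

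For the domination statement, I would work over the orbit closure $\Lambda=\mathrm{Supp}(\mu)$ and pass to the extended linear Poincar\'e flow $\widetilde\psi_t$ on $\mathcal{N}_{\widetilde\Lambda}SM$, using the transgression $\widetilde\mu$ of $\mu$ from Lemma \ref{ms}, which has the same Lyapunov exponents. The key point is that the star condition is an open condition that forbids weak hyperbolicity: if the hyperbolic splitting $\mathcal{E}^s\oplus\mathcal{F}^u$ (defined $\mu$-a.e.) were \emph{not} dominated, then by a standard argument (Liao's selecting lemma / Ma\~n\'e-type perturbation techniques adapted to flows, as in Gan-Wen or Shi-Gan-Wen) one could find periodic orbits of $C^1$-nearby vector fields whose linear Poincar\'e flow has an eigenvalue arbitrarily close to $1$ in modulus — contradicting that a full $C^1$ neighborhood of $X$ has only hyperbolic critical elements. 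Concretely, I would: (i) use ergodicity and the Oseledec/Birkhoff theorems to produce a long orbit segment of $X$ along which the angle between the would-be stable and unstable directions is small infinitely often and along which the Lyapunov behavior is controlled; (ii) apply a closing-and-perturbation lemma (Liao's shadowing for the linear Poincar\'e flow, plus a Franks-type lemma for flows) to create, after an arbitrarily small $C^1$ perturbation, a periodic orbit whose Poincar\'e return map is non-hyperbolic or has a complex/real eigenvalue of modulus $\approx 1$; (iii) derive a contradiction with the star hypothesis. Running this for the stable side and the unstable side gives domination of $\mathcal{E}^s\oplus\mathcal{F}^u$.

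I expect the main obstacle to be exactly step (ii): making the perturbation theory work uniformly near singularities and controlling the \emph{shear} (the flow-direction component that the linear Poincar\'e flow quotients out). Since $\Lambda$ may accumulate on $\mathrm{Sing}(X)$, one cannot directly invoke compactness of $\mathcal{N}_\Lambda$; this is the reason for introducing the extended flow $\widetilde\psi$ on the compact set $\widetilde\Lambda$ and for Lemmas \ref{lemma: continuous of elp} and \ref{Lem:dominated-splitting-closure}. The careful point is that a dominated splitting for $\psi_t$ on $\Lambda\setminus\mathrm{Sing}(X)$ is equivalent to one for $\widetilde\psi_t$ on $\widetilde\Lambda$, so it suffices to rule out non-domination on the compact model, where the usual $C^1$-perturbation machinery (Franks' lemma for flows, Liao's closing lemma, the uniformity of the star condition on a neighborhood $\mathcal{U}$) can be applied. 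I would organize the argument so that the non-compact/singular difficulties are absorbed entirely into the passage to $\widetilde\Lambda$, after which the proof mirrors the classical diffeomorphism case: star $\Rightarrow$ uniform hyperbolicity of periodic Poincar\'e maps $\Rightarrow$ (via a measure-to-periodic-orbit approximation and Liao-type selecting lemmas) domination of the Oseledec splitting of every regular ergodic measure.
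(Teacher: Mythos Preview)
Your hyperbolicity step matches the paper exactly: both simply cite \cite[Theorem~E]{SGW14}. For the domination, however, you take a genuinely different route from the paper.

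The paper does not argue by contradiction or invoke a Franks-type lemma. Instead it proceeds \emph{directly}: Liao's a~priori estimates for star flows (Lemma~\ref{Lem:liao-estimates}) give uniform constants $\eta,T_0$ and a neighborhood $\mathcal{U}$ of $X$ such that \emph{every} periodic orbit of \emph{every} $Y\in\mathcal{U}$ already carries a dominated (and uniformly hyperbolic) splitting for $\psi^Y_t$, with the same constants. The paper then applies Wen's ergodic closing lemma (Theorem~\ref{T2}) at a $\mu$-typical strongly closable point $x$, producing vector fields $X_n\to X$ with periodic orbits ${\rm Orb}(x_n)$ whose empirical measures $\mu_n$ converge weak$^*$ to $\mu$. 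The hypothesis $h_\mu(X)>0$ is used exactly where you guessed: together with Ruelle's inequality it rules out that infinitely many ${\rm Orb}(x_n)$ are sinks or sources, so the splittings $\mathcal{E}_{x_n}\oplus\mathcal{F}_{x_n}$ are nontrivial. Passing to the limit using Lemmas~\ref{lemma: continuous of elp} and~\ref{ms} yields a dominated splitting $\mathcal{G}\oplus\mathcal{H}$ at $x$, and a short Birkhoff--ergodic--theorem computation on the transgression $\widetilde\mu$ shows $\int\log\|\widetilde\psi_{T_0}|\mathcal G\|\,d\widetilde\mu\le-\eta$ and $\int\log m(\widetilde\psi_{T_0}|\mathcal H)\,d\widetilde\mu\ge\eta$, forcing $\mathcal G=\mathcal E^s$ and $\mathcal H=\mathcal F^u$.

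Your contradiction strategy is in principle viable and closer in spirit to Ma\~n\'e's diffeomorphism argument, but it front-loads the hard work: you must make precise what it means for a $\mu$-a.e.\ defined measurable splitting to fail domination, and then actually implement a Franks-type perturbation for the linear Poincar\'e flow along a selected recurrent orbit segment --- precisely the singularity/shear issues you flag. The paper's route sidesteps all of this because Lemma~\ref{Lem:liao-estimates} has already converted the star hypothesis into uniform domination on periodic orbits of nearby flows, so only the comparatively soft ergodic closing lemma and a limiting/identification argument are needed.
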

   The proof of Theorem \ref{Thm:dominated-measure} is in Section \ref {Proof}. Recall the classical definition of the entropy. For a homeomorphism $f:M\rightarrow M$, the metric $d^f_n$ defined as $d^f_n(x,y)=\max\limits_{0\leq i\leq n-1} d(f^ix,f^iy)$, the topological entropy $h_{top}(f)$ is defined as $$h_{top}(f)= \lim_{\varepsilon\rightarrow 0}\limsup_{n\rightarrow \infty}\dfrac{\log N_f(n,\varepsilon)}{n},$$where $N_f(n,\varepsilon)$ is the minimal number of $\varepsilon$-balls in the $d^f_n$ metric covering the space $M$. Katok \cite[Theorem 1.1]{Katok} defined the metric entropy $h_\mu(f)$ of $f$-invariant ergodic measure $\mu$ as $$h_\mu(f)= \lim_{\varepsilon\rightarrow 0}\lim_{n\rightarrow \infty}\dfrac{\log N_f(n,\varepsilon,\delta)}{n},$$where $N_f(n,\varepsilon,\delta)$ is the minimal number of $\varepsilon$-balls in the $d^f_n$ metric covering the set of measure larger than or equal to $1-\delta$.

\begin{Theorem}\label{Thm:measure-exponential}
   Let $\mu$ be a regular ergodic invariant hyperbolic measure of $X \in \mathcal {X}^1(M)$. If the hyperbolic Oseledec splitting $\mathcal{N}=\mathcal{E}^s\oplus\mathcal{F}^u$ is a dominated splitting, then $$\limsup_{T\to \infty}\frac{1}{T}\log \#P_T(X)\geq h_\mu(X):=h_\mu(\varphi_1).$$
\end{Theorem}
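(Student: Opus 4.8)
The plan is to run Katok's strategy — approximate $\mu$ by Pesin-like hyperbolic blocks, close up long orbit arcs into genuine periodic orbits, and count them against $(n,\e)$-separated sets — but with two substitutions forced by the $C^1$ flow setting: the \emph{dominated} Oseledec splitting $\cN=\cE^s\oplus\cF^u$ of the linear Poincar\'e flow plays the role of Pesin theory, and Liao's shadowing (closing) lemma for the \emph{rescaled} linear Poincar\'e flow $\psi^*_t$ — which is the device that keeps the \emph{period} of the shadowing orbit under control — replaces Katok's closing lemma. We may assume $h:=h_\mu(\varphi_1)>0$, otherwise there is nothing to prove. By Katok's description of $h_\mu$ via separated sets (the formula quoted above), there is, for every small $\delta>0$, a number $\e_0>0$ such that any Borel set of $\mu$-measure $\geq 1-\delta$ contains, for all large $n$, an $(n,\e_0)$-separated set for the metric $d^{\varphi_1}_n$ of cardinality $\geq{\rm e}^{n(h-\delta)}$. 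Each such separated point will be ``closed up'' into a periodic orbit of period $\approx n$, and the assignment will be shown to be at most polynomially many-to-one; counting then gives the theorem.

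\emph{Step 1 (uniformization).} The hypothesis is that the hyperbolic Oseledec splitting $\cN=\cE^s\oplus\cF^u$, whose $\psi_t$-Lyapunov exponents are bounded away from $0$, is dominated (Definition \ref{Def:dominated-poincare}); note the rescaling leaves the exponents unchanged, as $\int\frac{\langle DX(X),X\rangle}{\parallel X\parallel^2}\,d\mu=0$ once $\log\parallel X\parallel\in L^1(\mu)$ — a standard but nontrivial point using that $\mu$ is regular. Applying Birkhoff's ergodic theorem to $t\mapsto\log\parallel\psi^*_t|\cE^s\parallel$ and $t\mapsto\log\parallel\psi^*_{-t}|\cF^u\parallel$, and then Pliss's lemma, one obtains a compact set $K\subset M\setminus{\rm Sing}(X)$ with $\mu(K)>1-\delta$ and constants $C_0,\lambda'>0$ such that every orbit arc $\{\varphi_t(x):0\leq t\leq\tau\}$ with $x,\varphi_\tau(x)\in K$ and $\tau\geq T_0$ is a $(C_0,\lambda')$-quasi-hyperbolic string for $\psi^*$: uniform contraction of $\cE^s$ and, in reverse time, of $\cF^u$ along the arc, the domination supplying the transversality between the two bundles. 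Fixing a small ball $\Gamma_0\subset K$ with $\mu(\Gamma_0)>0$ and performing the Birkhoff/pigeonhole bookkeeping of Katok's proof — but now also recording the time parameters — one arranges, for all large $n$, a set $S_n$ of $(n,\e_0)$-separated points, all lying in $\Gamma_0$, each with a return time $\tau(x)\in[(1-\delta)n,(1+\delta)n]$ for which $\varphi_{\tau(x)}(x)\in\Gamma_0$, and with $\#S_n\geq{\rm e}^{n(h-2\delta)}$ (the bookkeeping costs at most a polynomial factor, which does not affect the exponential rate).

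\emph{Step 2 (closing up and counting).} Fix $\rho\ll\e_0$. For $x\in S_n$ the arc from $x$ to $\varphi_{\tau(x)}(x)$ is a quasi-hyperbolic \emph{periodic} pseudo-orbit for $\psi^*$; by Liao's shadowing lemma for the rescaled linear Poincar\'e flow — the $C^1$ analogue of the Anosov closing lemma, built on the dominated structure, cf.\ \cite{LGW05} and Liao's work — there is a genuine periodic point $p_x$ of period $\pi_x$ whose orbit $\gamma_x$ $\rho$-shadows the arc and, crucially, satisfies $|\pi_x-\tau(x)|<\rho$, so $\pi_x\in[(1-2\delta)n,(1+2\delta)n]$. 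If $x,y\in S_n$ have $\gamma_x=\gamma_y=:\gamma$, then both arcs wrap once around $\gamma$ (as $\pi_x,\pi_y\approx n$), so $x$ is $\rho$-close to some $\varphi_s(q)$ and $y$ to some $\varphi_{s'}(q)$ with $q\in\gamma$; since $\gamma$ is a single periodic orbit, $d(\varphi_t x,\varphi_t y)\leq 2\rho+|s-s'|\cdot\parallel X\parallel_\infty$ for all $t$, so two such points can be $(n,\e_0)$-separated only if $|s-s'|\gtrsim\e_0$. Hence each periodic orbit $\gamma$ is closed up from at most $O(\parallel X\parallel_\infty\,\pi_\gamma/\e_0)=O(n)$ points of $S_n$, so $X$ has at least $c\,\#S_n/n\geq c\,{\rm e}^{n(h-2\delta)}/n$ distinct periodic orbits of period in $[(1-2\delta)n,(1+2\delta)n]$; each of them contributes its period $\geq(1-2\delta)n$ to $\#P_{(1+2\delta)n}(X)$, so
$$\#P_{(1+2\delta)n}(X)\ \geq\ c(1-2\delta)\,{\rm e}^{n(h-2\delta)}.$$
Taking logarithms, dividing by $(1+2\delta)n$, letting $n\to\infty$ along the chosen $n$'s and then $\delta\to0$, we get $\limsup_{T\to\infty}\frac1T\log\#P_T(X)\geq h$, as desired.

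\emph{Main obstacle.} The delicate point is Step 2: extracting a true \emph{periodic} orbit of the flow (not merely a periodic string for $\psi^*$) whose period is pinned to $\tau(x)$ within $\ll\e_0$. This is exactly where the shear of the flow would wreck the count — a naive closing lemma returns an orbit that shadows the arc only up to an uncontrolled time reparametrization, which is fatal since we are summing periods in $\#P_T$. Carrying the factor $\parallel X(x)\parallel/\parallel X(\varphi_t x)\parallel$ throughout, i.e.\ working with $\psi^*_t$ and Liao's quasi-hyperbolic strings, is what converts the hyperbolicity estimates into simultaneous control of the shadowing distance and of the period; the domination of $\cE^s\oplus\cF^u$ (a hypothesis here, supplied by Theorem \ref{Thm:dominated-measure} in the star case) is precisely what makes the shadowing/cone-field construction go through in the $C^1$ category, where Pesin's stable-manifold theory is unavailable. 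A secondary technical point, kept separate, is the integrability $\log\parallel X\parallel\in L^1(\mu)$ used to identify the $\psi^*$- and $\psi$-exponents.
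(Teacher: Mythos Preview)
Your overall architecture matches the paper's: Pesin block of positive measure for $\psi^*$, an $(n,\e_0)$-separated set inside it via Katok's entropy formula, close up each point by Liao's shadowing, then count. The gap is in Step~2, exactly at the point you flag as the ``main obstacle'' but then resolve incorrectly.

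You assert that Liao's shadowing produces $p_x$ with $|\pi_x-\tau(x)|<\rho$ for a \emph{fixed} $\rho$ independent of $n$, and your counting inequality $d(\varphi_t x,\varphi_t y)\le 2\rho+|s-s'|\cdot\|X\|_\infty$ tacitly uses the stronger statement $|\theta_x(t)-t|<\rho$ for \emph{all} $t$. But the standard Liao lemma (Theorem~\ref{Thm:Liao-shadowing}) gives only $|\theta'(t)-1|<\e_1$, hence $|\theta(t)-t|\le\e_1 t$, a drift that grows linearly in $t$. Shadowing says $d(\varphi_t x,\varphi_{\theta_x(t)}(p_x))<\rho$, not $d(\varphi_t x,\varphi_{t+s}(q))<\rho$; so the middle term of your triangle inequality at the separation time $j$ is $|\theta_x(j)-\theta_y(j)-(s'-s)|\cdot\|X\|_\infty$, and the drift contributes up to $2\e_1 n\|X\|_\infty$, which for large $n$ swamps $\e_0$ and destroys the ``$O(n)$ points per orbit'' bound.

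The paper devotes its Section~\ref{Sec:shadow-measure} to exactly this: Proposition~\ref{Pro:time-control} proves $|\theta(\tau)-\tau|\le N\cdot d(x,\varphi_T(x))$ for every integer $\tau\in[0,T]$, with $N=N(\eta,T_0)$ independent of $T$. The mechanism is \emph{not} a consequence of the rescaling. It combines (i) a local holonomy estimate (Lemma~\ref{L3}, via the implicit function theorem on the sectional Poincar\'e map) bounding the per-step time drift by the normal distance, with (ii) exponential decay of the shadowing distance along the arc (Lemma~\ref{L4}, using the dominated splitting and Gan's generalized shadowing \cite{Gan02}); summing the per-step drifts then gives a convergent geometric series bounded by a constant times $d(x,\varphi_T(x))$. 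Your sentence ``working with $\psi^*_t$ \ldots\ is what converts the hyperbolicity estimates into simultaneous control of the shadowing distance and of the period'' conflates two separate issues: the rescaling makes the quasi-hyperbolicity constants uniform as orbits approach singularities, but the bounded time drift requires the additional argument just described. Once Proposition~\ref{Pro:time-control} is in hand, your counting goes through essentially as written (and is how the paper concludes).
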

   For this theorem, we have to deal with the re-parametrization problem. In Liao's shadowing lemma, the period of periodic point which shadows the recurrent point is re-parametrization of the recurrent time. For our goal, we have to estimate the difference between the recurrent time and its re-parametrization. In section \ref{Sec:shadow-measure}, we give the shadowing lemma with time control. Theorem \ref{Thm:measure-exponential} is proved in section \ref{Sec:periodic orbits}.      

\begin{proof}[Proof of Theorem \ref{Thm:main}]
   Take a dense $G_\delta$ set $\mathcal{R}\subset{\cal X}^1(M)$ as Theorem \ref{Thm:non-star-infinite}. For any $X\in\mathcal{R}$, if $X$ is not star, by Theorem \ref{Thm:non-star-infinite}, one has $$\limsup_{T\to \infty}\frac{1}{T}\log \#P_T(X)=\infty> h_{top}(X).$$ If $X$ is star, then any ergodic invariant measure $\mu$ of $X$ is a hyperbolic measure by \cite[Theorem E]{SGW14}. We can suppose that $h_\mu(X)>0$, since if $h_\mu(X)=0$, the inequality is true. According to Theorem \ref{Thm:dominated-measure}, the hyperbolic Oseledec splitting $\mathcal{N}=\mathcal{E}^s\oplus\mathcal{F}^u$ w.r.t. the ergodic invariant measure $\mu$ is a dominated splitting. By Theorem \ref{Thm:measure-exponential}, one has $$\limsup_{T\to \infty}\frac{1}{T}\log \#P_T(X)\geq h_\mu(X).$$ By the variational principle, $h_{top}(\varphi_1) = \sup~ \{h_\mu(\varphi_1): \mu {\rm \ is \ an \ ergodic \  measure \ of \ X }\}$. Thus one has $$\limsup_{T\to \infty}\frac{1}{T}\log \#P_T(X)\geq h_{top}(X).$$ The proof of Theorem \ref{Thm:main} is complete. 
\end{proof}

\subsection{The proof of Theorem \ref{Thm:dominated-measure}}\label{Proof}

 Liao has proved the following estimates for star flows in \cite[Proposition 4.4]{Lia79}.       
\begin{Lemma}\label{Lem:liao-estimates}  
   For every star vector field $X\in\mathcal{X}^1(M)$, there are a $C^1$ neighborhood $\mathcal{U}$ of $X$ and constants $\eta>0,~T_0>0$, such that for any periodic orbit $\mathcal{O}$ of every $Y\in\mathcal{U}$ with $\pi(\mathcal{O})\geq T_0$ and the natural hyperbolic splitting $\mathcal{N}_{\mathcal{O}}=\mathcal{E}\oplus\mathcal{F}$ w.r.t. $\psi^Y_t$, we have
\begin{enumerate}
   \item For any $x\in \mathcal{O}$ and every $t\geq T_0$, one has 
   $\dfrac{\parallel\psi^Y_t|\mathcal{E}_x\parallel}{m(\psi^Y_t|\mathcal{F}_x)}\leq {\rm e}^{-2\eta t};$ 
   \item For any $x\in \mathcal{O}$, $$\prod^{[\pi(\mathcal{O})/T_0]-1}_{i=0}\parallel\psi^Y_{T_0}|\mathcal{E}_{\varphi^Y_{iT_0}(x)}\parallel\leq {\rm e}^{-\eta\pi(\mathcal{O})},~~\prod^{[\pi(\mathcal{O})/T_0]-1}_{i=0} m(\varphi^Y_{T_0}|\mathcal{F}_{\varphi^Y_{iT_0}(x)})\geq {\rm e}^{\eta\pi(\mathcal{O})}.$$
\end{enumerate}
\end{Lemma}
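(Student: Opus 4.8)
The plan is to follow Liao's strategy and prove the two estimates by contradiction, extracting from any failure of uniformity a $C^1$-small perturbation of a nearby vector field that creates a periodic orbit with an eigenvalue of modulus $1$ on its Poincar\'e return map, contradicting the star property. The one genuine analytic ingredient is a Franks-type perturbation lemma for flows: given a periodic orbit $\mathcal O$ of $Y$ and a $C^0$-small, spatially distributed perturbation of the linear Poincar\'e cocycle $\psi^Y_t|_{\mathcal O}$, there is $Z$ that is $C^1$-close to $Y$, coincides with $Y$ outside thin flow boxes along $\mathcal O$, still has $\mathcal O$ as a periodic orbit (with period changed only by a small reparametrization -- this is where the ``shear'' has to be tracked), and whose linear Poincar\'e cocycle along $\mathcal O$ is the prescribed one. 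I would invoke it through two elementary cost estimates, both using that the relevant orbits have large period: redistributing a total multiplicative amplification $\mathrm e^{a}$ over a sub-arc of $\mathcal O$ of length $L$ costs a $C^1$-perturbation of size $O(a/L)$, and rotating a direction field through total angle $\theta$ over a sub-arc of length $L$ costs $O(\theta/L)$; hence perturbations that realign directions or redistribute small amounts of contraction/expansion over long stretches of $\mathcal O$ are $C^1$-cheap.

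Assuming the Lemma fails, pick $\mathcal U_k\downarrow\{X\}$, thresholds $T_0^{(k)}\uparrow\infty$ and rates $\eta_k\downarrow 0$ with $\eta_kT_0^{(k)}\to 0$, together with periodic orbits $\mathcal O_k$ of $Y_k\in\mathcal U_k$, $\pi(\mathcal O_k)\ge T_0^{(k)}$, on which the first or the second estimate fails; pass to a subsequence so the same estimate fails throughout, and (using that $-Y$ is star whenever $Y$ is, which disposes of the $\mathcal F$-statements by time reversal) so that it is the $\mathcal E$-statement. If the second estimate fails, there is $x_k\in\mathcal O_k$ with $\prod_i\|\psi^{Y_k}_{T_0^{(k)}}|\mathcal E_{\varphi_{iT_0^{(k)}}(x_k)}\|>\mathrm e^{-\eta_k\pi(\mathcal O_k)}$. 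Choose in each block a norm-maximizing unit vector $v_i^k\in\mathcal E_{\varphi_{iT_0^{(k)}}(x_k)}$; since each block has length $T_0^{(k)}\to\infty$, a $C^1$-cheap rotation aligns $\psi_{T_0^{(k)}}v_i^k$ with $v_{i+1}^k$ all the way around, so that $\mathrm{span}(v_0^k)$ becomes an invariant line of the return map of $\mathcal O_k$ whose multiplier has modulus $>\mathrm e^{-\eta_k\pi(\mathcal O_k)}$ up to a controlled correction from the short leftover window; a further cheap amplification then forces the modulus to be exactly $1$, producing a non-hyperbolic periodic orbit of a vector field in $\mathcal U_k$, a contradiction. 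If instead the first (domination) estimate $\|\psi_t|\mathcal E_x\|\cdot\|\psi_{-t}|\mathcal F_{\varphi_t(x)}\|\le\mathrm e^{-2\eta t}$ fails, then -- reducing $t$ modulo the period and using that over a full period $\psi_\pi|\mathcal E$ already contracts while $\psi_\pi|\mathcal F$ already expands (a statement weaker than the second estimate, and proved the same way) -- the obstruction localizes to one long window on which a stable direction $u$ and an unstable direction $w$ have nearly reciprocal multipliers; rotating $u$ onto $w$ over that window replaces the pair by a two-dimensional block with complex multipliers of modulus within $\mathrm e^{\pm O(\eta_k)}$ of $1$, and a final small amplification makes the modulus $1$ -- again contradicting the star property. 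Once the estimates hold with uniform constants, the constant-free form of the first estimate for all $t\ge T_0$ follows by enlarging $T_0$ to absorb the multiplicative constant, starting the estimate (by periodicity) at a well-chosen base point.

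The hard part is the perturbation lemma and its accounting: one needs a Franks-type lemma for flows realizing prescribed small distributed perturbations of the linear Poincar\'e cocycle along a long periodic orbit while controlling the orbit's reparametrization, and one must verify that non-uniform hyperbolicity, resp.\ non-uniform domination, really translates -- for orbits of large enough period -- into a $C^1$-perturbation whose norm tends to $0$. A secondary difficulty is the regime where $\pi(\mathcal O)/T_0$ stays bounded rather than tending to $\infty$: there the redistribution cost need not vanish, and one falls back on the uniform hyperbolicity of periodic orbits of bounded period near $X$, together with continuity of the linear Poincar\'e flow (Lemma~\ref{lemma: continuous of elp}). The remaining ingredients -- the rotation/amplification cost estimates, the choice of $T_0$, and the bookkeeping of leftover windows -- are routine.
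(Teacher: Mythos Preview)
The paper does not prove this lemma at all: it is stated with a citation to Liao \cite[Proposition 4.4]{Lia79} and used as a black box. Your sketch is essentially a reconstruction of Liao's original argument (the diffeomorphism analogue being Ma\~n\'e's): assume uniformity fails along periodic orbits of growing period for vector fields accumulating on $X$, and use a Franks-type perturbation of the linear Poincar\'e cocycle---rotations and amplifications whose $C^1$ cost scales like the total effect divided by the available orbit length---to manufacture a modulus-one eigenvalue, contradicting the star condition. The outline is correct and the difficulties you flag (the flow Franks lemma with shear control, and the bounded-period regime) are precisely the places where work is needed. So there is no genuine methodological difference to compare: you are supplying what the paper outsources to the reference, and your route is the classical one. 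One small remark: your negation scheme with $\eta_k T_0^{(k)}\to 0$ is a convenient choice but not forced by the logic---the negation of the lemma only gives you freedom to let $\eta_k\downarrow 0$ and $T_0^{(k)}\uparrow\infty$ independently, and the argument should (and does) go through for any such sequences.
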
   
   Next, we introduce the ergodic closing lemma for flows and give the statement about the relationship between periodic orbits and metric entropy.     

\begin{Definition}
   A point $x\in M\setminus{\rm Sing}(X)$ is \emph{strongly closable}, if for any $C^1$ neighborhood $\mathcal{U}$ of $X$, $L>0$ and any $\delta>0,$ there are $Y\in \mathcal{U}$, $y\in M$ and $\tau_0>0$ such that
\begin{enumerate}
   \item $\varphi^Y_{\tau_0}(y)=y;$ 
   \item $d(\varphi^Y_t(y),\varphi^X_t(x))<\delta,~~\forall~~0\leq 		t\leq \tau_0;$
   \item $X=Y$ on $M\backslash B$, \text {where} $B=\bigcup\limits _{t\in[-L,0]}B(\varphi^X_t(x),\delta)$.
\end{enumerate}
\end{Definition}
   Denote by $\varSigma(X)$ the set of strongly closable points of $X$, Wen \cite[Theorem 3.9]{Wen3} gave the following flow version of the ergodic closing lemma.
\begin{Theorem}\label{T2}(\cite[Theorem 3.9]{Wen3})
   For any $C^1$ vector field $X$ and any $\varphi^X_t$-invariant Borel probability measure $\mu$, one has $\mu({\rm Sing}(X)\cup\varSigma(X))=1$.
\end{Theorem}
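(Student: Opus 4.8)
The plan is to prove this flow version of Mañé's ergodic closing lemma by a direct perturbation argument: follow the orbit of a $\mu$-typical point and weld it into a periodic orbit by a single localized modification of the vector field. By the ergodic decomposition it suffices to treat an ergodic invariant $\mu$. Since $\mathrm{Sing}(X)$ is closed and invariant, ergodicity forces $\mu(\mathrm{Sing}(X))\in\{0,1\}$; the case $\mu(\mathrm{Sing}(X))=1$ is trivial, and if $\mu$ is carried by a single periodic orbit then every point of it lies in $\varSigma(X)$ with witness $Y=X$. So we may assume $\mu$ is ergodic, non-atomic and $\mu(\mathrm{Sing}(X))=0$, and must show $\mu(\varSigma(X))=1$. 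The three quantifiers in the definition of $\varSigma(X)$ can be replaced by a countable family (a countable basis of $C^1$-neighborhoods of $X$, $\delta\in\{1/m\}$, $L\in\NN$), so by the usual bookkeeping it is enough, for fixed $\e>0$, $\delta>0$, $L>0$, to exhibit a full-measure set of points closable with those data and then to intersect; monotonicity (a witness for smaller $\e$ and smaller $\delta$ serves for larger ones, and a perturbation localized near a short orbit arc is localized near a longer one) recovers all $\mathcal U,\delta,L$.

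For the core step, work with a $\mu$-a.e.\ $x$, which is Poincar\'e recurrent and Birkhoff generic. Since $\mu(\mathrm{Sing}(X))=0$ there is $\rho_0>0$ with $\mu(\Delta_{\rho_0})>0$, where $\Delta_{\rho_0}=\{z: d(z,\mathrm{Sing}(X))\ge\rho_0\}$, so the orbit of $x$ returns to $B(x,\e')\cap\Delta_{\rho_0}$ with positive frequency for every $\e'>0$. Fix $\e'$ tiny (to be chosen against $\e$ and $\rho_0$) and a large such return time $t_k$ with $\varphi^X_{t_k}(x)\in B(x,\e')\cap\Delta_{\rho_0}$, chosen moreover — a first-return-type selection among the returns, dodging finitely many near-self-approaches by spreading the perturbation over a short block of consecutive returns when necessary, exactly as in Pugh's closing lemma — so that the arc $\varphi^X_{[1,\,t_k-2s]}(x)$ stays at distance $\gtrsim\rho$ from a closing site $p:=\varphi^X_{t_k-s}(x)$, with $\rho\ll\delta$ and $s>0$ the small crossing time. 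Now define $Y$ by modifying $X$ only inside the flow box $B(p,\rho)$, adding a bump field transverse to $X$ that reroutes the orbit arc through the box so that $\varphi^Y_{\tau_0}(x)=x$ for a $\tau_0$ close to $t_k$. Because $p\in\Delta_{\rho_0}$, where $\|X\|$ is bounded below, the $C^1$-size of this bump is $O\!\big(\e'\,\|X\|_{C^1}/\rho^{2}\big)$ once the transverse coordinates are normalized by $1/\|X\|$ — that is, when the estimate is phrased through the rescaled linear Poincar\'e flow of Definition \ref{Def:linearPoincare} — so choosing $\e'\ll\e\rho^{2}$ puts $Y$ in the prescribed neighborhood. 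Then $y=x$ is $Y$-periodic with period $\tau_0=t_k+O(\rho/\|X\|)$; its $Y$-orbit agrees with the $X$-orbit of $x$ outside the $\rho$-neighborhood of $p$ and stays within $\delta$ of $\varphi^X_t(x)$ for $0\le t\le\tau_0$ (it literally follows $\varphi^X_t(x)$ except near the closing, where both orbits are within $O(\e'+\rho)$ of $x$); and $Y=X$ off $B=\bigcup_{t\in[-L,0]}B(\varphi^X_t(x),\delta)$, because, taking $\delta$ small enough that $s<L$, the box $B(p,\rho)$ lies in $B(\varphi^X_{-s}(x),\delta)\subset B$. Hence $x\in\varSigma(X)$, the good set has full $\mu$-measure, intersecting over the countable family gives $\mu(\varSigma(X))=1$ for ergodic non-singular $\mu$, and adding back the singular part via ergodic decomposition yields $\mu(\mathrm{Sing}(X)\cup\varSigma(X))=1$ for every invariant $\mu$.

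I expect the main obstacle to be exactly this perturbation step, and the flow setting is what makes it harder than Ma\~n\'e's diffeomorphism case. Two flow-specific phenomena are responsible. First, singularities and shear: a regular ergodic measure can have orbits lingering near $\mathrm{Sing}(X)$, where $\|X\|$ degenerates and the tangent flow ${\rm d}\varphi^X_t$ develops unbounded shear along the flow direction, so neither the size estimate above nor the "reroute through a box" construction is uniform there; the remedy is to force the closing site $p$ into $\Delta_{\rho_0}$ (legitimate precisely because $\mu(\mathrm{Sing}(X))=0$) and to carry out all the estimates in $1/\|X\|$-normalized coordinates, i.e.\ via the (rescaled) linear Poincar\'e flow, the normalization under which the perturbation bounds become uniform. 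Second, reparametrization: the welded orbit has period $\tau_0\ne t_k$; this is harmless for the present statement, which only asks for a $\delta$-shadowing periodic orbit and not for $\tau_0=t_k$, but it must be quantified as $|\tau_0-t_k|=O(\rho/\|X\|)$ so that the same construction can be reused with period control later in the paper. Granting the Pugh-type selection of a non-self-obstructed recurrence time, which is the remaining technical kernel, the argument is complete.
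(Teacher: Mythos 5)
The paper does not prove this statement at all: it is quoted verbatim as Wen's flow version of Ma\~n\'e's ergodic closing lemma (\cite[Theorem 3.9]{Wen3}), so the only thing to assess is whether your sketch would constitute a proof. It would not, because the step you defer --- ``granting the Pugh-type selection of a non-self-obstructed recurrence time'' --- is not a technical kernel that can be granted; it is essentially the whole content of the theorem. A single bump supported in $B(p,\rho)$ leaves the orbit of $x$ unchanged only until its first entry into that box, and by recurrence the $X$-orbit of $x$ typically re-enters any neighborhood of the closing site many times before time $t_k$; after the first such intermediate pass the $Y$-orbit of $x$ decouples from the $X$-orbit and you lose both the closing $\varphi^Y_{\tau_0}(x)=x$ and, more fatally, condition (2) of strong closability, which demands $\delta$-shadowing of the \emph{entire} segment $[0,\tau_0]$, not just proximity near the welding point. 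Moreover the two scales you introduce are coupled against each other: to keep the $C^1$ size of the bump below $\e$ you need the return distance $\e'\lesssim\e\rho^2$, which forces later and later return times, while avoidance of the box $B(p,\rho)$ by the intermediate arc becomes harder as the return time grows; for a fixed recurrent point there is in general \emph{no} return satisfying both constraints. This is precisely why the conclusion is only an almost-everywhere statement with respect to every invariant measure rather than a statement about all recurrent points, and why Ma\~n\'e's proof (and Wen's adaptation to flows) is genuinely measure-theoretic: one perturbs in a long string of small flow boxes distributed along the orbit, and one shows by a covering/counting argument that the set of points whose orbits make ``obstructed'' returns to every admissible box system has measure zero for every invariant measure. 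None of that machinery appears in your sketch, so the argument as written does not prove the theorem; your surrounding reductions (ergodic decomposition, countable bases for the quantifiers, working in $\Delta_{\rho_0}$ since $\mu(\Sing(X))=0$, rescaled Poincar\'e coordinates near the radius-$\rho_0$ region, and the observation that reparametrization of the period is harmless here) are all reasonable, but they only frame the problem. For this paper the correct move is simply to cite \cite{Wen3}, as the authors do.
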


\begin{proof}[Proof of Theorem \ref{Thm:dominated-measure}]
   According to \cite[Theorem E]{SGW14}, $\mu$ is a hyperbolic measure. Let $\mathcal{N}=\mathcal{E}^s\oplus\mathcal{F}^u$ be the hyperbolic Oseledec splitting w.r.t. the measure $\mu$. By Lemma \ref{Lem:liao-estimates}, there are $\eta>0,~T_0>0$ and a $C^1$ neighborhood $\mathcal{U}$ of $X$, such that for every periodic orbit $\mathcal{O}$ of any $Y\in\mathcal{U}$ with $\pi(\mathcal{O})\geq T_0$ and the natural hyperbolic splitting $\mathcal{N}_{\mathcal{O}}=\mathcal{E}\oplus\mathcal{F}$ w.r.t. $\psi^Y_t$, one has $$\dfrac{\parallel\psi^Y_t|\mathcal{E}_x\parallel}{m(\psi^Y_t|\mathcal{F}_x)}\leq {\rm e}^{-2\eta t},~~\forall~t\ge T_0,\forall~x\in\mathcal O;$$ $$\prod^{[\pi(\mathcal{O})/T_0]-1}_{i=0}\parallel\psi^Y_{T_0}|\mathcal{E}_{\varphi^Y_{iT_0}(x)}\parallel\leq {\rm e}^{-\eta\pi(\mathcal{O})},~~\prod^{[\pi(\mathcal{O})/T_0]-1}_{i=0} m(\psi^Y_{T_0}|\mathcal{F}_{\varphi^Y_{iT_0}(x)})\geq {\rm e}^{\eta\pi(\mathcal{O})},\forall~x\in\mathcal O.$$
  Since $h_\mu(X)>0$ and the metric entropy on any critical element is zero, we will assume that $\mu$ does not support on any critical element for the rest of proof. Define $$B(\mu)=\{x: \lim\limits_{T\rightarrow\infty}\dfrac{1}{T}\int^T_0f(\varphi_t(x))dt =\int fd\mu, \forall~\text{continuous function $f:M\rightarrow\mathbb{R}$}\}.$$ Since $\mu$ is ergodic, one has $\mu(B(\mu))=1$. Thus, one has $\mu(B(\mu)\cap Supp(\mu)\cap\varSigma(X))=1$ by Theorem \ref{T2}. For $\mu$-a.e. $x$, there are vector fields $\{X_n\}_{n\in\mathbb{N}^+}\subset\mathcal{U}$, points $\{x_n\}_{n\in\mathbb{N}^+}\subset M$ and $\{\tau_n:\tau_n>0\}_{n\in\mathbb{N}^+}$ with $\varphi^{X_n}_{\tau_n}(x_n)=x_n$ satisfying:
\begin{itemize}
\item $d(\varphi^{X_n}_t(x_n),\varphi^X_t(x))<\frac{1}{n},$ for $\forall~t\in[0,\tau_n];$\\
\item $\lVert X_n-X\lVert_{C^1}<\frac{1}{n}.$
\end{itemize}
Consider the ergodic measure $\mu_n$ which is supported on the orbit of $x_n$. Since $x$ is strongly closable, for any continuous function $f$, one has $$\lim_{n\rightarrow\infty}\int fd\mu_n=\lim_{n\rightarrow\infty}\dfrac{1}{\tau_n}\int^{\tau_n}_0f(\varphi_t(x_n))dt = \lim_{n\rightarrow\infty}\dfrac{1}{\tau_n}\int^{\tau_n}_0f(\varphi_t(x))dt = \int fd\mu.$$ Thus, $\mu_n\to \mu$ in the sense of weak$^*$ topology. As $\mu$ is not supported on any critical element, one has $\tau_n\rightarrow\infty$ as $n\rightarrow\infty$.  
\begin{Claim}
There are only finite sinks or sources among $\{Orb(x_n)\}$.
\end{Claim}   
\begin{proof}
If not, for fixed $x\in B(\mu)\cap Supp(\mu)\cap\varSigma(X)$, we may assume that $Orb(x_n)$ are sinks, then one only has 
$$\prod^{[\tau_n/T_0]-1}_{i=0}\parallel\psi^{X_n}_{T_0}(\varphi^{X_n}_{iT_0}(x_n))\parallel\leq {\rm e}^{-\eta\tau_n}.$$ 
By the definition of the extended linear Poincar\'e flow, one has that
$$\prod^{[\tau_n/T_0]-1}_{i=0}\parallel\widetilde{\psi}^{X_n}_{T_0}(\varphi^{X_n}_{iT_0}(x))\parallel \leq {\rm e}^{-\eta\tau_n}.$$ 
Since $d(\varphi^{X_n}_t(x_n),\varphi^X_t(x))<\frac{1}{n}$ for any $t\in[0,\tau_n]$ and $\lVert X_n-X\lVert_{C^1}<\frac{1}{n}$, one has that 
$$\prod^{[\tau_n/T_0]-1}_{i=0}\parallel\widetilde{\psi}^X_{T_0}(\varphi^X_{iT_0}(x))\parallel\leq {\rm e}^{-\eta\tau_n}.$$ 
By Lemma~\ref{ms}, the definition of the transgression of a measure, one has that $\int\log\parallel\widetilde{\psi}^X_{T_0}\parallel d\widetilde{\mu}(x)\leq -\eta$. It means that $\int\log\parallel\psi^X_{T_0}\parallel d\mu(x)\leq -\eta$. Therefore, the Lyapunov exponents of the linear Poincar\'e flow $\psi_t$ are negative. By the Ruelle inequality \cite[Theorem 2]{RD}, one can get $h_\mu(\varphi_{T_0})=0$. Since $\mu$ is an ergodic measure, $h_\mu(\varphi_{T_0})=\arrowvert T_0\arrowvert h_\mu(\varphi_1)=\arrowvert T_0\arrowvert h_\mu(X)>0$. This is a contradiction. The claim is thus proved.
\end{proof}   
   By Lemma \ref{Lem:liao-estimates}, for the non-trivial hyperbolic splitting $\mathcal{N}_{Orb(x_n)}=\mathcal{E}\oplus\mathcal{F}$ w.r.t. $\psi^{X_n}_t$,    
\begin{equation} \label{E2}
\begin{split} 
   \prod^{[\tau_n/T_0]-1}_{i=0}\parallel\psi^{X_n}_{T_0}|\mathcal{E}_{\varphi^{X_n}_{iT_0}(y)}\parallel\leq {\rm e}^{-\eta\tau_n},~~\prod^{[\tau_n/T_0]-1}_{i=0}m(\psi^{X_n}_{T_0}|\mathcal{F}_{\varphi^{X_n}_{iT_0}(y)})\geq {\rm e}^{\eta  \tau_n},~\forall~y\in Orb(x_n).
\end{split} 
\end{equation}
   We may assume that the indices of $Orb(x_n)$ are the same, then there is a dominated splitting on the limit point $x$ as $\mathcal{N}_x=\mathcal{G}_x\oplus\mathcal{H}_x$, where $\mathcal{G}_x=\lim\limits_{n\rightarrow\infty}\mathcal{E}_{x_n}$ and $\mathcal{H}_x=\lim\limits_{n\rightarrow\infty}\mathcal{F}_{x_n}$. We only need to prove that $\mathcal{G}_x=\mathcal{E}^s_x$ and $\mathcal{H}_x=\mathcal{F}^u_x$. By Lemma \ref{ms}, the inequalities (\ref{E2}) means that 
   $$\int\log\parallel\widetilde{\psi}^{X_n}_{T_0}|\mathcal{E}_x\parallel d\widetilde{\mu}_n(x)\leq -\eta,~~\int\log m(\widetilde{\psi}^{X_n}_{T_0}|\mathcal{F}_x)d\widetilde{\mu}_n(x)\geq\eta.$$ 
   By Lemma~\ref{lemma: continuous of elp}, since $\lVert X_n-X\lVert_{C^1}<\frac{1}{n}$ for every $n\in\mathbb{N}^+$, one has $$\int\log\parallel\widetilde{\psi}^X_{T_0}|\mathcal{E}_x\parallel d\widetilde{\mu}_n(x)\leq -\eta,~~\int\log m(\widetilde{\psi}^X_{T_0}|\mathcal{F}_x)d\widetilde{\mu}_n(x)\geq\eta.$$ By Lemma \ref{lemma: continuous of elp} again, one has  $\int\log\parallel\widetilde{\psi}^X_{T_0}|\mathcal{G}_x\parallel d\widetilde{\mu}(x)\leq -\eta$ and $\int\log m(\widetilde{\psi}^X_{T_0}|\mathcal{H}_x)d\widetilde{\mu}(x)\geq\eta$. According to the Birkhoff ergodic theorem and Lemma \ref{ms}, one has $$\lim_{m\rightarrow \infty}\dfrac{1}{m}\sum^{m-1}_{i=0}\log\parallel\psi^X_{T_0}|\mathcal{G}_{\varphi_{iT_0}(x)}\parallel=\lim_{m\rightarrow \infty}\dfrac{1}{m}\sum^{m-1}_{i=0}\log\parallel\widetilde{\psi}^X_{T_0}|\mathcal{G}_{\varphi_{iT_0}(x)}\parallel=\int\log\parallel\widetilde{\psi}^X_{T_0}|\mathcal{G}_x\parallel d\widetilde{\mu}(x)\leq -\eta,$$ $$\lim_{m\rightarrow \infty}\dfrac{1}{m}\sum^{m}_{i=1}\log m(\psi^X_{T_0}|\mathcal{H}_{\varphi_{iT_0}(x)})=\lim_{m\rightarrow \infty}\dfrac{1}{m}\sum^{m}_{i=1}\log m(\widetilde{\psi}^X_{T_0}|\mathcal{H}_{\varphi_{iT_0}(x)}) =\int\log m(\widetilde{\psi}^X_{T_0}|\mathcal{H}_x) d\widetilde{\mu}(x)\geq\eta.$$ 
  It means that $\lim\limits_{m\rightarrow \infty}\dfrac{1}{m}\sum\limits^{m-1}_{i=0}\log\parallel\psi^X_{T_0}|\mathcal{G}_{\varphi_{iT_0}(x)}\parallel\leq-\eta<0.$ That implies $\mathcal{G}_x\subset\mathcal{E}^s_x$. If $\mathcal{E}^s_x\nsubseteq\mathcal{G}_x$, then there is a non-zero vector $v$ belong to $\mathcal{E}^s_x$ but not belong to $\mathcal{G}_x$. One has the dominated splitting $v= v_1 +v_2~{\rm and \ }v_1\in\mathcal{G}_x,~0\neq v_2\in\mathcal{H}_x$. Therefore, $$\lim\limits_{t\rightarrow\infty}\dfrac{1}{t}\log \lVert \psi^X_t(v)\rVert=\lim\limits_{t\rightarrow+\infty}\dfrac{1}{t}\log \lVert \psi^X_t(v_2)\rVert\geq\lim_{m\rightarrow \infty}\dfrac{1}{m}\sum^{m}_{i=1}\log m(\widetilde{\psi}^X_{T_0}|\mathcal{H}_{\varphi_{iT_0}(x)})>0.$$ This contradicts to the fact that the Lyapunov exponents along $\mathcal{E}^s_x$ are negative. Consequently, one has that $\mathcal{E}^s_x\subset\mathcal{G}_x$. Therefore, one has that $\mathcal{E}^s_x=\mathcal{G}_x$. Similarly, one can get that $\mathcal{F}^u_x=\mathcal{H}_x$.
\end{proof}

\subsection{Proof of Theorem \ref{Thm:non-star-infinite}}\label{Sec:generic-non-star}
\begin{Lemma}\label{L8}
   There is a dense $G_\delta$ set $\mathcal{R}\subset{\cal X}^1(M)$ such that for given $T,~k\in \mathbb{N}^+$, if for every $C^1$ neighborhood $\mathcal{U}$ of $X\in \mathcal{R}$, there is $Y\in \mathcal{U}$ having $k$ periodic orbits whose periods belong to $(T,\frac{3T}{2})$, then $X$ has $k$ orbits whose periods belong to $(T,\frac{3T}{2})$. 
\end{Lemma}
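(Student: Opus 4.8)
The plan is a Baire-category argument organized around the observation that, for fixed $T$ and $k$, the condition ``$X$ has at least $k$ periodic orbits with minimal period in the \emph{open} interval $(T,\frac{3T}{2})$'' becomes an open condition after an arbitrarily small $C^1$-perturbation; a single dense $G_\delta$ set then serves all $(T,k)\in\NN^+\times\NN^+$ at once via a countable intersection.

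Fix $T,k\in\NN^+$ and let $U_{T,k}\subset\mathcal X^1(M)$ be the set of vector fields having at least $k$ pairwise distinct periodic orbits whose minimal periods lie in $(T,\frac{3T}{2})$. The core step is to show that $\operatorname{int}(U_{T,k})$ is dense in $U_{T,k}$, i.e. $\overline{U_{T,k}}=\overline{\operatorname{int}(U_{T,k})}$. To see this, take $X\in U_{T,k}$ with distinct periodic orbits $\mathcal O_1,\dots,\mathcal O_k$ of minimal periods $p_1,\dots,p_k\in(T,\frac{3T}{2})$. Since each $p_i$ lies strictly inside the interval, a standard localized $C^1$-perturbation near each $\mathcal O_i$ in turn (with supports in small pairwise disjoint neighbourhoods of the $\mathcal O_i$, as in the proof of the Kupka--Smale theorem) yields, for every $\varepsilon>0$, a vector field $X'$ with $\|X'-X\|_{C^1}<\varepsilon$ carrying $k$ distinct \emph{hyperbolic} periodic orbits $\mathcal O_1',\dots,\mathcal O_k'$, with $\mathcal O_i'$ close to $\mathcal O_i$ and with minimal periods still in $(T,\frac{3T}{2})$. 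Hyperbolicity of each $\mathcal O_i'$ then provides a $C^1$-neighbourhood of $X'$ on which $\mathcal O_i'$ admits a hyperbolic continuation whose minimal period depends continuously on the vector field; after shrinking the neighbourhood, the $k$ continuations stay pairwise distinct and their periods stay in $(T,\frac{3T}{2})$. Thus a full neighbourhood of $X'$ lies in $U_{T,k}$, so $X'\in\operatorname{int}(U_{T,k})$; letting $\varepsilon\to 0$ gives $X\in\overline{\operatorname{int}(U_{T,k})}$.

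Granting this, set $\mathcal R_{T,k}=\mathcal X^1(M)\setminus\partial\bigl(\operatorname{int}(U_{T,k})\bigr)$, where $\partial(\cdot)$ denotes the topological boundary. Since $\operatorname{int}(U_{T,k})$ is open, its boundary is closed with empty interior, so $\mathcal R_{T,k}$ is open and dense, and $\mathcal R:=\bigcap_{T,k\in\NN^+}\mathcal R_{T,k}$ is a dense $G_\delta$ subset of the Banach (hence Baire) space $\mathcal X^1(M)$. Now fix $X\in\mathcal R$ and $T,k\in\NN^+$, and suppose every $C^1$-neighbourhood of $X$ contains some $Y$ with $k$ periodic orbits whose periods lie in $(T,\frac{3T}{2})$; this says precisely that $X\in\overline{U_{T,k}}$. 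By the core step, $\overline{U_{T,k}}=\overline{\operatorname{int}(U_{T,k})}=\operatorname{int}(U_{T,k})\cup\partial\bigl(\operatorname{int}(U_{T,k})\bigr)$, and since $X\in\mathcal R_{T,k}$ excludes the second alternative we obtain $X\in\operatorname{int}(U_{T,k})\subset U_{T,k}$; that is, $X$ itself has at least $k$ periodic orbits with periods in $(T,\frac{3T}{2})$, which is the conclusion. (If the statement is read with ``exactly $k$'' in place of ``at least $k$'', one applies the same reasoning also to $(T,k+1)$: on $\mathcal R$, a vector field lying in $U_{T,k+1}$ lies in $\operatorname{int}(U_{T,k+1})$ and so cannot be a limit of vector fields having exactly $k$ such orbits, which together with the case just proved pins down the count.)

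The step that requires genuine care rather than formal point-set topology is the core step, specifically the localized perturbation making the chosen periodic orbits hyperbolic without the perturbations interfering with one another and, above all, without pushing their periods out of the open window $(T,\frac{3T}{2})$; this is exactly where the hypothesis that the period interval is open is used, and it is the only place analysis enters. Everything after it is the Baire property of $\mathcal X^1(M)$ together with the elementary fact that the boundary of an open set is nowhere dense.
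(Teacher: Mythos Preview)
Your proof is correct and shares the same analytical backbone as the paper's: both hinge on the fact that, after a Kupka--Smale type localized perturbation, the $k$ periodic orbits can be made hyperbolic while their minimal periods remain in the open window $(T,\tfrac{3T}{2})$, so that the property becomes $C^1$-open; a Baire argument over the countable family $(T,k)\in\NN^+\times\NN^+$ then finishes.

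The packaging differs. The paper introduces a countable topological base $\{O_i\}$ of $M$, forms the finite unions $\{U_n\}$, and for each triple $(k,n,T)$ defines an open set $\mathcal H^k_{n,T}$ (fields with $k$ \emph{hyperbolic} such orbits inside $U_n$) and its ``complement'' $\mathcal N^k_{n,T}$, proving $\mathcal H^k_{n,T}\cup\mathcal N^k_{n,T}$ is open and dense and intersecting over all $(k,n,T)$. You bypass the extra index $n$ entirely: working directly with $U_{T,k}$, you prove $\overline{U_{T,k}}=\overline{\operatorname{int}(U_{T,k})}$ and use the elementary fact that the boundary of an open set is nowhere dense to get the open--dense set $\mathcal R_{T,k}=\mathcal X^1(M)\setminus\partial(\operatorname{int}(U_{T,k}))$. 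This is cleaner bookkeeping and yields the statement with less notation; the paper's $U_n$-localization is not actually needed for the conclusion as stated, though it does give the marginally stronger information that on $\mathcal R$ the periodic orbits can be pinned down in a prescribed open region of $M$.
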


\begin{proof}
   Fix a countable topological base $\{O_1,O_2,\dots,O_i,\dots\}$ of $M$. Let $\{U_1,U_2,\dots,U_n,\dots\}$ be the family of finite unions of $\{O_i\}$. We define $$\mathcal{H}^k_{n,T}\triangleq\{X: X\text{ has $k$ hyperbolic periodic orbits with period belonging to $(T,3T/2)$ in $U_n$} \},$$
\begin{eqnarray*}
   \mathcal{N}^k_{n,T}\triangleq\{~X: \exists~C^1 {\rm  \ neighborhood \ \mathcal{U} \ of \ X },~s.t.~\text{for any $Y\in\mathcal{U}$, either $Y$ has}\\ \text{ no $k$ periodic orbits with periods belonging to $(T,3T/2)$ or all $k$ periodic}\\ \text{ orbits with period belonging to $(T,3T/2)$ of $Y$ are not in}~U_n~\}.
\end{eqnarray*}
   By the definition, the set $\mathcal{N}^k_{n,T}$ is open. By the stability of hyperbolicity, $\mathcal{H}^k_{n,T}$ is open. 
\begin{Claim}
   $\overline{\mathcal{H}^k_{n,T}\cup \mathcal{N}^k_{n,T}}=\mathcal{X}^1(M).$
\end{Claim}
\begin{proof}[Proof of Claim:]
   For any $X\in\mathcal{X}^1(M)$, if $X\notin \mathcal{N}^k_{n,T}$, then for any $C^1$ neighborhood $\mathcal{U}$ of $X$, there is a $Y\in\mathcal{U}$ which has $k$ periodic orbits whose periods belong to $(T,3T/2)$ belonging to $U_n$. Thus, there is a sequence $\{X_m\}_{m\in\mathbb{N}^+}\subset \mathcal{H}^k_{n,T}$ such that $X_m\rightarrow X$. Therefore, $X\in\overline{\mathcal{H}^k_{n,T}}$. Consequently, $\mathcal{H}^k_{n,T}\bigcup\mathcal{N}^k_{n,T}$ is open and dense. 
\end{proof}
   Let $$\mathcal{R}=\bigcap^{\infty}_{k=1}\bigcap^{\infty}_{n=1}\bigcap^{\infty}_{T=1}\left(\mathcal{H}^k_{n,T}\cup \mathcal{N}^k_{n,T}\right).$$ 
   It is clear that $\mathcal{R}$ is a residual subset of $\mathcal{X}^1(M)$. For given $T>0$ and any $X\in \mathcal{R}$, if there exists a $C^1$ neighborhood $\mathcal{U}$ of $X\in \mathcal{R}$ such that any $Y\in \mathcal{U}$ has $k$ periodic points with period belonging to $(T,3T/2)$, then $X\notin \mathcal{N}^k_{n_0,T}$ for some $n_0$. Therefore, $X\in \mathcal{H}^k_{n_0,T}.$  
\end{proof}

   We prove Theorem \ref{Thm:non-star-infinite} based on the generic property of vector fields.  
\begin{proof}[Proof of Theorem \ref{Thm:non-star-infinite}]
   Take a dense $G_\delta$ set $\mathcal{R}\subset{\cal X}^1(M)$ as Lemma \ref{L8}. If any $X \in\mathcal{R}$ is not star, then for any $C^1$ neighborhood $\mathcal{U}$ of $X$, there is $Y\in\mathcal{U}$ which has a non-hyperbolic periodic point $x$ of $Y$. Let $T'$ be the period of $x$, then $\psi^Y_{T'}$ has a eigenvalue $\lambda$ whose module is $1$. For any $N\in\mathbb{N}^+$, we consider the following two cases.
	
\paragraph{$\lambda$ is real.} In this case, $\lambda=\pm 1$. We may assume that $\lambda=1$ ( The case $\lambda=-1$ can be proved similarly). After a $C^1$ perturbation, one can assume that $Y$ is locally linear in a small neighborhood of the periodic orbit. Therefore, there is an infinite subset $B\subseteq M$ such that $\varphi^Y_{T'}|B=Id$. We can find at least $e^N$ fixed points of $\varphi^Y_{T'}$ in a small cross section at $x$. By Lemma~\ref{L8}, $X$ has $e^{N}$ periodic orbits whose periods belong to $([T']-1,3([T'-1])/2)$. Consequently, one has 
$\dfrac{1}{2T'}\log\#P_{2T'}(X)\geq N/2$. 
	
\paragraph{$\lambda$ is not real.} In this case, $D\varphi^Y_{T'}$ is a rotation map on the sub-eigenspace $V$ w.r.t. the eigenvalue $\lambda$. One can also assume that $Y$ is locally linear in a small neighborhood of the periodic orbit after a $C^1$ perturbation, one can also assume that $\varphi^Y_{T'}|V$ is a rational rotation by perturbation. Thus there is $k\in\mathbb{N}^+$ such that $\varphi^Y_{kT'}|V=Id$. Therefore, one can find at least $e^{NT}$ fixed points of $\varphi^Y_{T}$, where $T=kT'$. Consequently, $\dfrac{1}{T}\log\#P_{T}(Y)\geq N$.

\bigskip
  
In any case, for any $C^1$ neighborhood $\mathcal{U}$ of $X$ and every $n\in\mathbb{N}^+$, there are $Y\in\mathcal{U}$ and $T_0=T_0(n)$ such that $Y$ has at least $e^{nT_0}$ periodic orbits whose periods is $T_0$. By Lemma \ref{L8}, $X$ has at least $e^{nT_0}$ periodic orbits whose periods belong to $(T_0,3T_0/2)$. By the arbitrariness of $n$, one has $$\limsup_{T\rightarrow \infty}\dfrac{1}{T}\log\#P_T(X)=+\infty.$$ 
\end{proof} 

\section{A shadowing lemma with time control}\label{Sec:shadow-measure}   
   For the linear Poincar\'e flow, one has the shadowing lemma of Liao for some quasi-hyperbolic orbit segments.

\begin{Definition}\label{Def:quasi-hyperbolic}
   Assume that $\Lambda\subset M\setminus{\rm Sing}(X)$ is an invariant (not necessarily compact) set having a dominated splitting $\cN_\Lambda=\cE\oplus\cF$ w.r.t. the linear Poincar\'e flow.  Given $\eta>0$ and $T_0 >0$, an orbit arc $\varphi^X_{[0 , T]}(x)\subset\Lambda$ with $T>T_0$ is \emph{$(\eta,T_0)$-quasi hyperbolic} (associated to $\Lambda$) if there is a time partition $0=t_0<t_1<t_2<t_3<\cdot\cdot\cdot<t_l=T$ with $t_{i+1}-t_i\leq T_0,~i=0,\cdots,l-1$ such that for $k = 0,1,\cdot\cdot\cdot,l-1$, one has
\begin{align*}
   \prod^{k-1}_{i=0}\parallel\psi^\ast_{t_{i+1}-t_i}|\mathcal {E}_{\varphi^X_{t_i}(x)}\parallel\leq e^{-\eta t_k},~\prod^{l-1}_{i=k}m(\psi^\ast_{t_{i+1}-t_i}|\mathcal {F}_{\varphi^X_{t_i}(x)})\geq {\rm e}^{\eta (T-t_k)}.
\end{align*}
\end{Definition}
   For obtaining a periodic orbit, we have the Liao's shadowing Lemma (see \cite[Theorem 5.5]{Lia85} and \cite[Theorem I]{Liao}) which means the recurrent quasi-hyperbolic orbits whose initial point and terminal point are far away from ${\rm Sing}(X)$ can be shadowed by periodic orbits.
\begin{Theorem}\label{Thm:Liao-shadowing}
   (Liao's shadowing Lemma) Suppose $\Lambda \subset M\backslash {\rm Sing}(X)$ is an invariant set with a dominated splitting $\cN_\Lambda=\cE\oplus\cF$. Given $\varepsilon_0>0$ and two constants $\eta >0, T_0\geq 1$, for every $\varepsilon > 0$, there is $\delta >0$ such that for any orbit segment $\varphi^X_{[0 , T]}(x)\subset\Lambda$ with the following properties:
\begin{itemize}
  \item $d(x,{\rm Sing}(X))\ge\varepsilon_0$ and $d(\varphi^X_T(x),{\rm Sing}(X))\ge\varepsilon_0$,

  \item $\varphi^X_{[0 , T]}(x)$ is $(\eta,T_0)$-quasi hyperbolic,

  \item $d(x,\varphi^X_T(x))<\delta$,
\end{itemize}
   Then there exist a $C^1$ increasing homeomorphism $\theta: [0 , T]\rightarrow \RR$ and a periodic point $p\in M$ whose period is $\theta(T)$ such that:
\begin{enumerate}
  \item $1-\varepsilon < \theta'(t)< 1+\varepsilon,~\forall~t\in[0,T];$ 
  \item $d(\varphi^X_t(x), \varphi^X_{\theta(t)}(p)) \leq \varepsilon\lvert X(\varphi^X_t(x))\rvert,~\forall~t\in[0,T].$
\end{enumerate}
\end{Theorem}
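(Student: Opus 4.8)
The plan is to follow Liao's argument: reinterpret the quasi-hyperbolic arc as a hyperbolic pseudo-orbit for a sequence of Poincar\'e return maps between cross-sections, produce a shadowing orbit by a non-autonomous graph transform, close it up using the near-return hypothesis, and read off the period and the reparametrization from the return times. Concretely, first I would set up cross-section coordinates: using the partition $0=t_0<t_1<\cdots<t_l=T$ from Definition~\ref{Def:quasi-hyperbolic}, at each base point $y_i:=\varphi^X_{t_i}(x)$ take a small embedded disk $\Sigma_i$ transverse to $X$, modelled on the fiber $\mathcal{N}_{y_i}$ of the normal bundle through the rescaled exponential chart $v\mapsto\exp_{y_i}(\|X(y_i)\|\,v)$. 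The rescaling by $\|X(y_i)\|$ is essential because $\Lambda$ need not be compact and $\|X\|$ may be small along the arc, and it is exactly the device that turns the derivative of the holonomy into the \emph{rescaled} linear Poincar\'e flow: since $t_{i+1}-t_i\le T_0$ the first-return map $P_i:\Sigma_i\to\Sigma_{i+1}$ is well defined near $y_i$, sends $y_i$ to $y_{i+1}$, and in these charts $D_{y_i}P_i=\psi^\ast_{t_{i+1}-t_i}|\mathcal{N}_{y_i}$, which preserves the dominated splitting $\mathcal{E}\oplus\mathcal{F}$ restricted to the arc. (If $\Lambda$ is not closed, one first extends the splitting to $\widetilde\Lambda$ as in Lemma~\ref{Lem:dominated-splitting-closure} to obtain uniform constants.)

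Next, the two telescoping inequalities in Definition~\ref{Def:quasi-hyperbolic} say precisely that along the partition $P_{l-1}\circ\cdots\circ P_0$ contracts the $\mathcal{E}$-directions at rate $\mathrm{e}^{-\eta t_k}$ from the left and expands the $\mathcal{F}$-directions at rate $\mathrm{e}^{\eta(T-t_k)}$ from the right; together with domination (to bound the off-diagonal blocks and to propagate invariant cone fields) this makes the non-autonomous sequence $(P_i)$ uniformly hyperbolic in the sense needed for a graph transform. Because $d(x,\varphi^X_T(x))<\delta$ while $d(x,\mathrm{Sing}(X))\ge\varepsilon_0$ and $d(\varphi^X_T(x),\mathrm{Sing}(X))\ge\varepsilon_0$, for $\delta$ small the terminal section $\Sigma_l$ is identified with $\Sigma_0$ up to a small shift, so $\widehat P:=P_{l-1}\circ\cdots\circ P_0$ becomes a self-map of a chart around $y_0$; hyperbolicity gives a unique fixed point $q$ whose $(P_i)$-orbit stays within rescaled distance $\varepsilon$ of the $y_i$. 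Taking $p$ to be the point on the $X$-orbit of $q$ lying in $\Sigma_0$ and letting $s_i$ be the successive return times of $\varphi^X_\bullet(p)$ to $\Sigma_i$ (so $s_0=0$), the orbit of $q$ closing up makes $p$ periodic with period $s_l$; interpolating $t_i\mapsto s_i$ monotonically and smoothing yields the increasing $C^1$ map $\theta$ of the statement with $\theta(T)=s_l$, and conclusion (2) is the assertion that at each $t_i$ the two orbits lie on the common section $\Sigma_i$ at rescaled distance $\le\varepsilon$, propagated across each flow box.

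The main obstacle — and the estimate genuinely needed for Theorem~\ref{Thm:measure-exponential}, where the period of $p$ must be compared with $T$ — is the reparametrization bound (1), $|\theta'(t)-1|<\varepsilon$, equivalently $|(s_{i+1}-s_i)-(t_{i+1}-t_i)|\le\varepsilon(t_{i+1}-t_i)$ for each $i$. Here one uses that the return time from $\Sigma_i$ to $\Sigma_{i+1}$ is a smooth function on $\Sigma_i$ whose Lipschitz constant, measured in the rescaled chart, is controlled by the $C^1$-size and modulus of continuity of $X$ on a unit-size flow box (again uniform in $i$ thanks to the rescaling); since the shadowing orbit enters $\Sigma_i$ within rescaled distance $\varepsilon$ of $y_i$, its return time differs from $t_{i+1}-t_i$ by at most a constant times $\varepsilon$, and choosing the $\varepsilon$ of the cone construction small relative to $T_0$ and to that modulus of continuity forces the per-step bound; summing over $i$ and differentiating the interpolation controls $\theta'$ and hence $\theta(T)=\pi(p)$. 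Keeping track of the flow-direction component of the shift when $\Sigma_l$ is identified with $\Sigma_0$ — the ``shear'' flagged in the introduction — is the part of the bookkeeping requiring the most care.
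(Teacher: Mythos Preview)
The paper does not prove Theorem~\ref{Thm:Liao-shadowing}: it is quoted as Liao's shadowing Lemma with references to \cite[Theorem 5.5]{Lia85} and \cite[Theorem I]{Liao}, and is used as a black box in the subsequent arguments (Proposition~\ref{Pro:time-control}, Lemma~\ref{L4}, and the proof of Theorem~\ref{Thm:measure-exponential}). So there is no in-paper proof to compare your proposal against.

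That said, your outline is a faithful description of the standard strategy behind Liao's lemma: pass to rescaled normal sections along the partition, interpret the holonomy maps as a non-autonomous hyperbolic sequence whose linear parts are the rescaled linear Poincar\'e flow $\psi^*$, close up via a graph-transform/fixed-point argument using $d(x,\varphi^X_T(x))<\delta$, and recover $\theta$ from the successive return times. The emphasis you place on the rescaling by $\|X(y_i)\|$ to obtain uniform constants near singularities, and on the Lipschitz control of return times to bound $|\theta'-1|$, is exactly right and is what the paper itself exploits downstream (Lemma~\ref{L3} and Proposition~\ref{Pro:time-control} make the return-time Lipschitz estimate precise in order to control $|\theta(\tau)-\tau|$). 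If you want a self-contained write-up, the step requiring the most care is the one you flag last: when identifying $\Sigma_l$ with $\Sigma_0$ you must absorb both the normal displacement and the flow-direction shear into the pseudo-orbit gap, and verify that the resulting fixed-point problem still contracts; Liao's original papers handle this, and Gan's generalized shadowing lemma \cite{Gan02} (used in the paper's Lemma~\ref{L4}) gives a clean modern formulation.
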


\begin{Remark-numbered}\label{Rem:normal}
In Theorem~\ref{Thm:Liao-shadowing}, in fact one can have that $$\varphi_{\theta(t)}^X(p)\in \exp_{\varphi_t(x)}\cN_{\varphi_t(x)}(2\varepsilon|X(\varphi_t(x))|)$$ for any $t\in[0,T]$, where $\cN_y(\chi)=\{v\in\cN_y:~|v|\le\chi\}$ for any regular point $y$ and any $\chi>0$.

\end{Remark-numbered}

In fact, one can get more information on the periodic orbit.

\begin{Proposition}\label{Pro:time-control}
   Under the setting of Theorem \ref{Thm:Liao-shadowing}, if $T=mT_0$ for some $m\in\mathbb{R}^+$, then there is $N=N(\eta,T_0)$ such that
    $$|\theta(\tau)-\tau|\leq N\cdot d(x,\varphi^X_T(x)),~~~\forall~\tau\in\NN\cap[0,T].$$
\end{Proposition}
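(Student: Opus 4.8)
The plan is to track carefully how the reparametrization $\theta$ is produced inside the proof of Liao's shadowing lemma (Theorem~\ref{Thm:Liao-shadowing}) and to show that the deviation $\theta(\tau)-\tau$ accumulates in a controlled, essentially geometric-series fashion, with total size proportional to the "closing gap" $d(x,\varphi^X_T(x))$. The periodic point $p$ shadows $\varphi^X_{[0,T]}(x)$ with error at most $\varepsilon|X(\varphi^X_t(x))|$ transversally to the flow (Remark~\ref{Rem:normal}), so at each integer time $\tau$ the point $\varphi^X_{\theta(\tau)}(p)$ lies in the normal disc $\exp_{\varphi_\tau(x)}\cN_{\varphi_\tau(x)}(2\varepsilon|X(\varphi_\tau(x))|)$. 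The key observation is that the time-shift $\theta(\tau)-\tau$ is, up to bounded multiplicative constants coming from $\inf\|X\|$ and $\sup\|X\|$ on the $\varepsilon_0$-thick part where the orbit lives, comparable to the component of the shadowing error along the flow direction $X(\varphi_\tau(x))$.

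Concretely, I would proceed as follows. First, fix the time partition $0=t_0<t_1<\cdots<t_l=T$ from the definition of $(\eta,T_0)$-quasi-hyperbolicity; since $T=mT_0$ one may take $t_i=iT_0$, so $l=m$ and all steps have length exactly $T_0$. Second, for each $i$ write $p_i=\varphi^X_{\theta(t_i)}(p)$ and $x_i=\varphi^X_{t_i}(x)$, and decompose the difference between $\varphi^X_{T_0}(p_i)$ and $x_{i+1}$ into a normal component (which is handled by the quasi-hyperbolic contraction/expansion exactly as in the original proof) and a flow-direction component, which is precisely what determines $\theta(t_{i+1})-\theta(t_i)-T_0$. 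The flow-direction discrepancy at step $i+1$ is controlled by: the discrepancy carried over from step $i$ (amplified by at most a fixed factor $K=K(\eta,T_0)$ bounding $\|{\rm d}\varphi^X_{T_0}\|$ and its inverse on the relevant compact region), plus a new contribution of size $O(d(x,\varphi^X_T(x)))$ coming from the terminal closing condition being propagated backwards. Third — and this is where the geometry of Liao's construction enters — one uses that the shadowing orbit is built so that the normal errors form a summable (dominated) sequence and the along-flow errors, being slaved to them through the bounded cocycle, also sum to something of order $d(x,\varphi^X_T(x))$; summing $\theta(t_{i+1})-\theta(t_i)-T_0$ over $i=0,\dots,m-1$ telescopes to $\theta(T)-T$, and bounding the partial sums up to any integer $\tau$ gives the stated estimate with $N$ the resulting constant depending only on $\eta$ and $T_0$ (the $\varepsilon_0$- and $M$-dependence being absorbed since $\|X\|$ is bounded above and, on the $\varepsilon_0$-thick set, below).

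The cleanest way to package the last step is probably to set $g(i)=\theta(t_i)-t_i$ and establish a recursive inequality of the shape $|g(i+1)-g(i)|\le C_1 a_i + C_2\, d(x,\varphi^X_T(x))$, where $\sum_i a_i \le C_3\, d(x,\varphi^X_T(x))$ comes from the dominated-splitting estimates already used to produce $\theta$ (the normal shadowing error is geometrically small along $\cE$ from the start and along $\cF$ from the end, hence summable with sum controlled by the initial/terminal gap). Since $\tau\in\NN\cap[0,T]$ and the step length $T_0\ge 1$, each integer $\tau$ lies within one partition step of some $t_i$, and on that last short piece $|\theta'-1|<\varepsilon$ gives $|g(\tau)-g(i)|\le \varepsilon T_0$, which is itself $\le\varepsilon T_0 \le d(x,\varphi^X_T(x))\cdot(\text{const})$ after absorbing into $\delta$; so it suffices to bound $|g(i)|$ at partition times, which follows by summing the recursion. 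Collecting constants yields $N=N(\eta,T_0)$.

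\medskip

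\noindent\textbf{Main obstacle.} The real work is not the telescoping bookkeeping but extracting from the \emph{internal} construction of Liao's shadowing lemma the precise claim that the along-flow time-shift at each step is linearly slaved to a summable normal-error sequence whose sum is $O(d(x,\varphi^X_T(x)))$. In the literature Liao's lemma is usually stated as a black box producing $(\theta,p)$ with $1-\varepsilon<\theta'<1+\varepsilon$, and the reparametrization is obtained as the period of a fixed point of a Poincar\'e-type return map on a tube around the orbit; one must reopen that fixed-point argument (a graph-transform / contraction-mapping step in the product of normal discs and a one-dimensional "time" coordinate) and read off quantitative Lipschitz dependence of the time coordinate of the fixed point on the closing defect $d(x,\varphi^X_T(x))$. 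Everything else — boundedness of $\|X\|^{\pm1}$ on the $\varepsilon_0$-thick region, converting normal displacement to time displacement via $X$, summing a dominated series — is routine once that quantitative input is in hand.
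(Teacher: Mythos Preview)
Your outline is essentially the paper's proof, but you misidentify where the work lies. The paper does \emph{not} reopen the fixed-point construction inside Liao's lemma; instead it isolates the two ingredients you already named in your third paragraph as standalone lemmas. First (Lemma~\ref{L3}), a purely local flow-box / implicit-function argument shows that for $y\in N_x(\delta|X(x)|)$ the hitting time to the next normal section satisfies $|t(y)-1|\le C\,d(x,y)$; iterating over $T_0$ unit steps gives $|\theta(t_{i+1})-\theta(t_i)-T_0|\le C_2\,d(\varphi_{t_i}(x),\varphi_{\theta(t_i)}(p))$. Second (Lemma~\ref{L4}), the quasi-hyperbolic estimates together with Gan's generalized shadowing bound $d(x,p)\le C\,d(x,\varphi_T(x))$ yield $d(\varphi_{iT_0}(x),\varphi_{\theta(iT_0)}(p))\le C_1\,\alpha^{\min\{i,m-i\}}\,d(x,\varphi_T(x))$ with $\alpha=e^{-\eta/2}$. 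Summing the geometric series gives $N=2C_1C_2/(1-\alpha)$. No internal surgery on Liao's argument is needed.

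Two technical slips to fix. In your recursion $|g(i+1)-g(i)|\le C_1 a_i + C_2\,d(x,\varphi_T(x))$, the second term must be absent: if it were present, summing over $i$ would produce $mC_2\,d(x,\varphi_T(x))$, which grows with $m$ and destroys the uniform bound. The correct inequality is simply $|g(i+1)-g(i)|\le C_2\,a_i$ with $a_i$ the normal error at step $i$. Also, your handling of the fractional step from $t_i$ to an integer $\tau$ via $|\theta'-1|<\varepsilon$ gives only $\varepsilon T_0$, which is \emph{not} controlled by $d(x,\varphi_T(x))$ (recall $\varepsilon$ is fixed before $\delta$ is chosen, so $d(x,\varphi_T(x))<\delta$ can be arbitrarily small compared to $\varepsilon$). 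Instead apply the same Lipschitz hitting-time estimate (Lemma~\ref{L3}) on the partial interval $[t_i,\tau]$, which again gives a bound proportional to $a_i$.
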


For a normed vector space $A$ and $r>0$, denote by $A(r)=\{v\in A:~\|v\|<r\}$. Recall that $N_x(\chi)=\exp_x\cN_x(\chi)$.

\begin{Lemma}\label{L3}
   For the flow $\varphi^X_t$ generated by $X\in{\cal X}^1(M^d)$, there are two constants $C>0,~\delta>0$ such that if $y\in N_x(\delta| X(x)|)$, then there is a unique $t=t(y) \in [0,2]$ such that $\varphi_t(y)\in N_{\varphi_1(x)}(\delta)$ and $| t(y)-1|<C\cdot d(x,y)$.
\end{Lemma}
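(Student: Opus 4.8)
The plan is to use the flow box / tubular neighborhood structure around a regular orbit, together with a quantitative implicit function theorem. First I would observe that the statement is local and uniform: since $M$ is compact, it suffices to produce, for each regular point $x_0$, a neighborhood of $x_0$ on which the conclusion holds with uniform constants, and then take a finite subcover; at singularities there is nothing to prove because the condition $y\in N_x(\delta|X(x)|)$ becomes vacuous once $\delta$ is fixed and $|X(x)|$ is small, but to be safe one restricts attention to the compact set where $|X|\ge c>0$ and handles a uniform neighborhood of $\Sing(X)$ separately (there the hypothesis is empty for $\delta$ times a small bound). So fix a regular point and work in a flow box.

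The key step is to define the hitting-time function. Consider the smooth map $F(y,t)=\varphi_t(y)$ and the (local, codimension-one) section $\Sigma=N_{\varphi_1(x)}(\rho)$ transverse to the flow at $\varphi_1(x)$. For $y=x$ we have $F(x,1)=\varphi_1(x)\in\Sigma$, and $\frac{\partial}{\partial t}F(x,t)|_{t=1}=X(\varphi_1(x))$ is transverse to $T_{\varphi_1(x)}\Sigma=\cN_{\varphi_1(x)}$. Hence by the implicit function theorem there is a $C^1$ function $y\mapsto t(y)$, defined for $y$ near $x$ and near the base point of the section, with $\varphi_{t(y)}(y)\in N_{\varphi_1(x)}(\rho)$ and $t(x)=1$; uniqueness of $t(y)$ in $[0,2]$ follows from transversality of the flow to the section (the flow crosses $\Sigma$ once, in one direction, for $y$ close enough). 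The Lipschitz bound $|t(y)-1|<C\,d(x,y)$ is then just the statement that $t(\cdot)$ is $C^1$ hence locally Lipschitz; the constant $C$ is controlled by $\|DX\|_{C^0}$, by a lower bound on $|X|$ on the box, and by the geometry of $\exp$, all of which are uniform on compact pieces of $M\setminus\Sing(X)$. One should be slightly careful that the estimate is in terms of $d(x,y)$ and not $d(\varphi_1(x),\varphi_{t(y)}(y))$; this is fine because $\varphi_1$ is bi-Lipschitz with a uniform constant (again by compactness and the $C^1$ bound on $X$), so distances are comparable before and after flowing for unit time.

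I expect the main obstacle to be the uniformity of the constants near singularities, i.e. making precise that the radius $\delta|X(x)|$ of the allowed ball shrinks fast enough as $x\to\Sing(X)$ that the local flow-box argument, which degenerates there, never actually needs to be invoked. The clean way is: choose $\delta_1$ so small that the compact set $K=\{x:\,d(x,\Sing(X))\ge \delta_1\}$ admits a finite cover by flow boxes with uniform constants $C_0,\rho_0$, giving the lemma with $C=C_0$, $\delta=\rho_0$ on $K$; then for $x\notin K$, pick $\delta$ even smaller so that $\delta|X(x)|$ is dominated by $\delta_1 \inf_{K}|X|$ (using $|X(x)|\to 0$ as $x\to\Sing(X)$, or simply a global bound $|X(x)|\le \|X\|_{C^0}$ combined with choosing $\delta$ small), forcing $y\in N_x(\delta|X(x)|)\Rightarrow$ either the hypothesis is void or one has reduced to the case $x\in K$ after a bounded flow time. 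The remaining details are the routine quantitative estimates in the implicit function theorem, which I would not grind through.
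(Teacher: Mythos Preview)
Your core idea---using the implicit function theorem to define the hitting time $t(y)$ to the transverse section $N_{\varphi_1(x)}$ and then reading off a Lipschitz bound from its derivative---is exactly what the paper does. The difference, and the place where your plan has a real gap, is in how uniformity in $x$ is obtained as $x$ approaches $\Sing(X)$.

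Your proposed handling of this case does not work. For a regular point $x$ arbitrarily close to a singularity, the hypothesis $y\in N_x(\delta|X(x)|)$ is never vacuous: it describes a small but nonempty normal disk. Nor can you ``reduce to the case $x\in K$ after a bounded flow time'': the orbit $\varphi_{[0,1]}(x)$ may remain inside any prescribed neighbourhood of $\Sing(X)$ (think of a slow passage near a hyperbolic singularity), so no finite cover of $K=\{d(\cdot,\Sing(X))\ge\delta_1\}$ by flow boxes ever sees it. A compactness argument on $\{|X|\ge c\}$ therefore cannot produce constants valid for all regular $x$.

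The paper obtains uniformity differently, by exploiting the rescaling built into the hypothesis. Working in exponential coordinates at $x$ with $e_1=X(x)/|X(x)|$, one writes the hitting condition as $H(x,t,y,\tau)=0$, where $H$ is the component of $\widetilde\varphi_\tau\circ\widetilde\varphi_t(|X(x)|y)/|X(\varphi_t(x))|$ along the unit flow direction at $\varphi_t(x)$. The point is that $\partial H/\partial\tau|_{y=0,\tau=0}$ is the inner product of a unit vector with itself, hence equals $1$ identically in $x$; and $\partial H/\partial y$, $\partial H/\partial\tau$ are equicontinuous because their size is governed only by the global bound $K=\sup\{|\widetilde X_x|,\|D\widetilde X_x\|\}$, which is finite since $X$ is $C^1$ on a compact manifold. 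The implicit function theorem then gives $|\partial\tau/\partial y|$ uniformly bounded, with no need to stay away from singularities. (The paper carries this out first for short times $t\in[\delta/3,2\delta/3]$ and then concatenates to reach $t=1$.) This is the mechanism you should replace your compactness sketch with: the factor $|X(x)|$ in the radius of $N_x$ is not there to make the hypothesis empty near $\Sing(X)$, but to make the rescaled Poincar\'e map uniformly $C^1$.
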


\begin{proof}
   We take $\varepsilon_0>0$ such that the exponential map $exp_x$ is a diffeomorphism on the ball $T_xM(\varepsilon_0)$. For any $x\in M$ and any $y$ close to $x$, one can lift the local orbit of $y$ to $T_xM$ in the following way: for any $v\in T_xM$, if $\lVert v\rVert<\varepsilon_0$, then one can define 
 $$\widetilde{\varphi}_t(v)= exp^{-1}_x\circ\varphi_t\circ exp_x(v).$$ 
   It is clear that $\widetilde{\varphi}_t$ is a local flow generated by a $C^1$ vector field $\widetilde{X}_x$ on $T_xM$, where 
 $${\widetilde X}_x(v)=D(exp_x^{-1})\circ X(exp_x(v)).$$
 Thus, we have that
 $$K=\sup_{x\in M,~v\in T_x M(\varepsilon_0)} \{|\widetilde{X}_x(v)|,~\Arrowvert D\widetilde{X}_x(v)\Arrowvert\}<+\infty.$$

Now for any regular point $x\in M$, one can identify $T_x M$ to $\RR^d$ by some isometrical transformation satisfying  $e_1=X(x)/\lvert X(x)\rvert$ for an orthonormal basis $e=\{e_1,\cdots,e_d\}$ of $\RR^d$. In this way, the  flow $\widetilde{\varphi}_t$ can be regard as the solution of differential equation: $dz/dt=\widetilde{X}_x(z)$.

Given $\varepsilon>0$, by reducing $\varepsilon_0$ if necessary, one can assume that the map $D\exp_x(v)$ is $\varepsilon$-close to the identity map for any $x$ and $v\in T_xM(\varepsilon_0)$.

\begin{Claim}
There is $\delta>0$ such that for any regular point $x$, one has that
$$N_x(\delta|X(x)|)\cap {\rm Sing}(X)=\emptyset.$$
\end{Claim}
 
\begin{proof}[Proof of the Claim]
 It suffices to consider the flow ${\widetilde \varphi}_t$ and the vector field ${\widetilde X}_x$. For $\delta<\varepsilon/K$, we have that for any $v\in N_x(\delta|X(x)|)$,
 $$|\widetilde{X}_x(v)|\ge |\widetilde{X}_x(0)|-\max_{\xi\in T_xM(\varepsilon_0)}\|D\widetilde{X}_x(\xi)\|.|v|\geq| X(x)|-K\delta|X(x)|\geq (1-\varepsilon)|X(x)|>0.$$
  Since the map $D\exp_x$ is $\varepsilon$-close to identity, one can conclude.
\end{proof}

\begin{Claim}
 By reducing $\delta$ if necessary, for any regular point $x$, any point $y\in N_x(\delta|X(x)|/2)$ and any $t\in[\delta/3,2\delta/3]$, there is a unique $s=s(t,y)\in[0,\delta]$ such that $\varphi_s(y)\in N_{\varphi_t(x)}(\delta)$.

\end{Claim}
\begin{proof}[Proof of the Claim]
As before, one can work in the local chart introduced above. By reducing $\delta$ if necessary such that for any $|v|\leq\delta|X(x)|$, one has 
 $$\sup_{t\in(-\delta,\delta)}\dfrac{| \widetilde{X}_x(v)|}{|\widetilde{X}_x(\widetilde{\varphi}_t(v))|}<1+\dfrac{\varepsilon}{K},~\sup_{t\in(-\delta,\delta)}\angle(\widetilde{X}_x(v),\widetilde{X}_x(\widetilde{\varphi}_t(v)))<\dfrac{\varepsilon}{K}.$$

For any $v\in N_x(\delta|X(x)|/3 )$, for the time $t_0$ satisfying
$$|\widetilde{\varphi}_{t_0}(v)|=\delta| {\widetilde X}_x({\widetilde\varphi}_{t_0}(0))|, ~{\textrm{and}}~|\widetilde{\varphi}_s(v)|\leq\delta|{\widetilde X}_x({\widetilde \varphi}_s(0))|,~~~\forall s\in [0,t_0),$$ 
one has 
$$|\widetilde{\varphi}_{t_0}(v)|=|v+\int^{t_0}_0\widetilde{X}_x(\widetilde{\varphi}_t(v))dt|\leq|v|+(1+\varepsilon)t_0|{\widetilde X}_x(0)|.$$
Thus,
$$\delta|{\widetilde X}_x(0)|\le \delta(1+\varepsilon/K)|{\widetilde X}_x(0)|\le \delta/3 |{\widetilde X}_x(0)|+(1+\varepsilon)t_0|{\widetilde X}_x(0)|.$$
Consequently, we have $t_0\geq\frac{2\delta}{3(1+\varepsilon)}\geq\frac{\delta}{2}$.  Similar estimate gives the fact that $t_0\le2\delta$ by reversing the inequalities.

%
%
%
  \end{proof}

The above claim allows one to define the non-linear dynamics along the flows: the \emph{sectional Poincar\'e map}  and the \emph{rescaled sectional Poincar\'e map}\footnote{These two maps have been theoretically studied in \cite{Yang} and \cite{CrY17}.}. For any regular point $x$ and $t$,  the sectional Poincar\'e map 
   $$\mathcal{P}_t:\mathcal{N}_x(\delta| X(x)|)\rightarrow\mathcal{N}_{\varphi_t(x)}(\delta)$$
 is the lift of the holonomy map induced by the local flow from $\exp_x(\mathcal{N}_x(\delta| X(x)|))$ to $\exp_{\varphi_t(x)}(\mathcal{N}_{\varphi_t(x)}(\delta))$.
   
The rescaled sectional Poincar\'e map
   $${\mathcal P}_t^*:\mathcal{N}_x(\delta)\rightarrow\mathcal{N}_{\varphi_t(x)}(\delta)$$
 is defined by 
 $${\cP}_t^*(v)=\frac{\cP_t(|X(x)|v)}{|X(\varphi_t(x))|},~~~\forall v\in\cN_x(\delta).$$

     In the local coordinate, by the choice of $e_1$, one can assume that $N_x(r)=\{\sum y_i e_i\}$, where $y=(y_2,\cdots,y_d)\in \RR^{d-1}$. By abuse of the notions, one also denotes $\sum y_i e_i$ by $y$. In this local coordinate, one can present $\cP_t^*$. Define the map $\tau:\mathcal{N}_x(\delta/2)\rightarrow\mathbb{R}^1$ such that 
$$\widetilde{\varphi}_{\tau(y)}\circ\widetilde{\varphi}_t(\|{\widetilde X}_x(x)\|y)/\|{\widetilde X}_x({\widetilde\varphi}_t(x))\|\in\cN_{\varphi_t(x)}(\delta)$$ 
for any $y\in \mathcal{N}_x(\delta/2)$. From the above facts, the map $\tau$ is an injective.
In the local chart, the rescaled sectional Poincar\'e map $\cP_t^*$ can be written by
$$\cP_{t}^*(y)=\widetilde{\varphi}_{\tau(y)}\circ\widetilde{\varphi}_t(\|{\widetilde X}_x(x)\|y)/\|{\widetilde X}_x({\widetilde\varphi}_t(x))\|.$$

We are going to estimate $\tau(y)$. For $t\in[\delta/3,2\delta/3]$, we consider the function
    $$H(x,t,y,\tau)=\langle\widetilde{\varphi}_\tau\circ\widetilde{\varphi}_t(|{\widetilde X}_x(x)|y)/|{\widetilde X}_x({\widetilde\varphi}_t(x))|,\dfrac{\widetilde{X}_x(\widetilde{\varphi}_t(x))}{|\widetilde{X}_x(\widetilde{\varphi}_t(x))|}\rangle,$$ where $\langle\cdot,\cdot\rangle$ denotes the inner product in the local Euclidean coordinate. From the definition of map $\tau$, one has 
  \begin{enumerate}
 \item $H(x,t,y_0,\tau(y_0))=0$ from the definition of map $\tau$.
  	
 \item $\partial H/\partial y$ and $\partial H/\partial \tau$ are equi-continuous w.r.t. $x$ and $t$.
  	
  \item $\partial H/\partial\tau\arrowvert_{y=0,\tau=0}=\langle\dfrac{\widetilde{X}_x(\widetilde{\varphi}_t(x))}{| \widetilde{X}_x(\widetilde{\varphi}_t(x))|},\dfrac{\widetilde{X}_x(\widetilde{\varphi}_t(x))}{|\widetilde{X}_x(\widetilde{\varphi}_t(x))|}\rangle=1$.
  \end{enumerate}
By the Implicit Function Theorem, one has $\dfrac{\partial \tau}{\partial y}=-\dfrac{\partial H/\partial y}{\partial H/\partial \tau}$. Since $\varphi_t$ is $C^1$ and $\varphi_t(y)\in T_xM(\varepsilon_0)$, $\partial H/\partial\tau$ is uniformly bounded away from zero and  $\partial H/\partial y$ is uniformly bounded. Therefore, $\partial \tau/\partial y$ is uniformly bounded w.r.t. $y$. This means that $| s(t,y)-t|=| \tau(y)|\leq C_0\cdot d(x,y)$, where $C_0$ is a constant decided by $\partial \tau/\partial y$.      
  
 By adding the time consecutively, one can get the constant $C$. 
  
\end{proof}
   Fixed $\eta>0,~T_0\geq 1$, the following Lemma~\ref{L4} will show that the distance between $(\eta,T_0)$-quasi hyperbolic orbit and its shadowing periodic orbit can be controlled by the distance between the starting point and terminal point of this quasi hyperbolic orbit. The idea about proof of Lemma \ref{L4} can refer to \cite[Page 269, Corollary 6.4.17]{Katok1}. 
  
\begin{Lemma}\label{L4}
  Under the assumption of Proposition \ref{Pro:time-control}, taking $\alpha=e^{-\eta/2}$, there is a constant $C>0$ such that for the $(\eta,T_0)$-quasi hyperbolic orbit $\varphi_{[0,mT_0]}(x)$ and the shadowing orbit $\varphi_{[0,\theta(mT_0)]}(p)$ in Theorem \ref{Thm:Liao-shadowing}, one has $$d(\varphi_{iT_0}(x),\varphi_{\theta(iT_0)}(p))\leq C\cdot\alpha^{min\{i,m-i\}}\cdot d(x,\varphi_{mT_0}(x)),~\forall~i\in [0,n].$$
\end{Lemma}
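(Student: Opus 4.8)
The plan is to push everything into the normal bundle along the arc $\varphi_{[0,mT_0]}(x)$ and reduce the statement to a two–endpoint estimate for a nearly linear recursion. By Remark~\ref{Rem:normal}, for each $i\in\{0,1,\dots,m\}$ we may write $\varphi_{\theta(iT_0)}(p)=\exp_{\varphi_{iT_0}(x)}(\xi_i)$ with $\xi_i\in\cN_{\varphi_{iT_0}(x)}$ and $\|\xi_i\|\le 2\varepsilon\,|X(\varphi_{iT_0}(x))|$; set $w_i=\xi_i/|X(\varphi_{iT_0}(x))|$, so $w_i\in\cN_{\varphi_{iT_0}(x)}$ and $\|w_i\|\le 2\varepsilon$ for all $i$. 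Since $\varphi_{\theta(t)}(p)$ stays in the normal sections for every $t$, the passage from $w_i$ to $w_{i+1}$ is exactly the rescaled sectional Poincar\'e map $\cP^{(i)}$ from the section over $\varphi_{iT_0}(x)$ to that over $\varphi_{(i+1)T_0}(x)$; its derivative at the zero section is the rescaled linear Poincar\'e flow $\psi^*_{T_0}$, and by the analysis underlying Lemma~\ref{L3} (and its footnote references) $\cP^{(i)}$ is uniformly $C^1$–close to this linear map on the $2\varepsilon$–ball of each normal fiber, so that $\cP^{(i)}(v)=\psi^*_{T_0}v+R_i(v)$ with $\|R_i(v)\|\le\kappa\|v\|$ and $\kappa=\kappa(\varepsilon)\to 0$ as $\varepsilon\to 0$. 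Lemma~\ref{L4} then amounts to showing that the bounded, essentially closed solution $(w_i)$ of this recursion decays like $\alpha^{\min\{i,m-i\}}$ from both ends.

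The second ingredient is a boundary relation from periodicity: normalizing $\theta(0)=0$, the period of $p$ is $\theta(mT_0)$, so $\varphi_{\theta(mT_0)}(p)=\varphi_{\theta(0)}(p)=p$, hence $w_0$ and $w_m$ describe the \emph{same} point $p$ seen from the base points $x$ and $\varphi_{mT_0}(x)$, which are at distance $d:=d(x,\varphi_{mT_0}(x))<\delta$. Identifying $\cN_x$ with $\cN_{\varphi_{mT_0}(x)}$ through the short local orbit joining these base points and discarding the time component, this gives $\|w_0-w_m\|\le C_1 d$, with $C_1$ depending only on $X$ (here $d(x,\Sing(X))\ge\varepsilon_0$ keeps $|X|$ bounded below along the relevant points). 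Now I would run the hyperbolic shadowing estimate of \cite[Corollary~6.4.17]{Katok1} adapted to this cocycle: decompose $w_i=w_i^{\cE}\oplus w_i^{\cF}$ along the dominated splitting $\cN=\cE\oplus\cF$ (the projections are uniformly bounded, since domination forces a uniform angle); use the two product inequalities defining $(\eta,T_0)$–quasi hyperbolicity — which are precisely the estimates \emph{from} time $0$ along $\cE$ and \emph{to} time $mT_0$ along $\cF$ — to obtain the diagonal bounds $\|w_i^\cE\|\lesssim e^{-\eta iT_0}\|w_0^\cE\|+(\text{cross terms})$ and $\|w_i^\cF\|\lesssim e^{-\eta(m-i)T_0}\|w_m^\cF\|+(\text{cross terms})$; then use domination to control the cross terms and the smallness of $\kappa$ to close a contraction–mapping argument for $(w_i)$ in the weighted norm $\|(w_i)\|_\ast=\sup_i\alpha^{-\min\{i,m-i\}}\|w_i\|$, with $\|w_0-w_m\|\le C_1 d$ entering as the inhomogeneous term. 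This pins $w_0^\cE$ and $w_m^\cF$ to size $O(d)$ and yields $\|w_i\|\le C_2\,\alpha^{\min\{i,m-i\}}\,d$ for every $i$; note that the choice $\alpha=e^{-\eta/2}$ leaves slack, since $e^{-\eta T_0}\le e^{-\eta}<\alpha$ (as $T_0\ge 1$) and the factor $\tfrac12$ in the exponent absorbs the telescoped cross–coupling.

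Finally, $\|w_i\|$ and $d(\varphi_{iT_0}(x),\varphi_{\theta(iT_0)}(p))$ differ only by a bounded factor — the rescaling by $|X(\varphi_{iT_0}(x))|\le\max_{y\in M}|X(y)|$ together with the bi-Lipschitz control of $\exp$ — so the displayed inequality follows after possibly enlarging $C$, and all constants depend only on $\eta$, $T_0$ and $X$, not on $m$, $x$ or $p$. The remaining bookkeeping is that the quasi-hyperbolicity partition $0=t_0<\dots<t_l=mT_0$ need not be the grid $\{0,T_0,\dots,mT_0\}$; but consecutive partition times are within $T_0$, so the flow over each such interval distorts distances by at most $e^{LT_0}$ ($L$ a Lipschitz constant of $X$) and $\min\{j,l-j\}$ for $t_j$ near $iT_0$ agrees with $\min\{i,m-i\}$ up to a bounded additive error, so the estimate transfers from partition times to the grid with an enlarged constant.

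The step I expect to be the genuine obstacle is the middle one. Unlike the uniformly hyperbolic situation of \cite[Corollary~6.4.17]{Katok1}, here $(\eta,T_0)$–quasi hyperbolicity supplies only the \emph{endpoint} product estimates along $\cE$ and $\cF$: there is no step-wise contraction and no segment-wise estimate $\prod_{p=j}^{i-1}\|\psi^*_{T_0}|\cE_{\varphi_{pT_0}(x)}\|\lesssim e^{-\eta'(i-j)T_0}$, so the interior cross-coupling terms cannot be bounded term by term. Controlling them forces one to use the domination of $\cE$ by $\cF$ together with the \emph{a priori} bound $\|w_i\|\le 2\varepsilon$ and the smallness of $\kappa$ simultaneously — i.e. to set the whole estimate up as one fixed-point problem in the weighted norm rather than as independent bounds — and, alongside this, to keep the reparametrization $\theta$ (the ``shear'') from corrupting the clean recursion $w_{i+1}=\cP^{(i)}(w_i)$.
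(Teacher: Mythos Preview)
Your setup --- lifting to the normal bundle via Remark~\ref{Rem:normal}, working with the rescaled sectional Poincar\'e maps, and recording the boundary relation $\|w_0-w_m\|\lesssim d(x,\varphi_{mT_0}(x))$ --- matches the paper's exactly. Where you diverge, and where the difficulty you correctly flag bites, is in linearizing $\cP^{(i)}=\psi^*_{T_0}+R_i$ and then trying to absorb the cross-coupling in a weighted-norm fixed-point scheme. The paper sidesteps this by two moves you do not make. First, instead of linearizing, it uses the invariant \emph{plaques} of the dominated splitting for the sequence $\{\cP^{(i)}\}$: the $\cE$-plaque is genuinely invariant for the \emph{nonlinear} maps, so $d_\cE(\varphi_{iT_0}(x),\varphi_{\theta(iT_0)}(p))$ is controlled by the product $\prod_{j=0}^{i-1}\|\psi^*_{T_0}|\cE\|\le e^{-\eta i}$ with no remainder $R_i$ and hence no cross-coupling; likewise $d_\cF$ contracts backward from $i=m$. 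Quasi-hyperbolicity supplies precisely these endpoint-anchored products, so the segment-wise bounds $\prod_{p=j}^{i-1}$ you worry about are never needed. Second, the endpoint estimates $d(x,p)\le C\,d(x,\varphi_T(x))$ and $d(\varphi_T(x),\varphi_{\theta(T)}(p))\le C\,d(x,\varphi_T(x))$ are not re-derived but quoted from Gan's generalized shadowing lemma \cite[Theorem~1.1]{Gan02}, which is designed exactly for quasi-hyperbolic (not step-wise hyperbolic) sequences and already packages the contraction-mapping argument you outline.

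The obstacle you name is therefore real for your route: in the linearized recursion the telescoped cross-terms involve $\sum_{j}\|A_{i-1}\cdots A_{j+1}\|\,\kappa\,\|w_j\|$, and neither domination (which bounds only the ratio $\|A_i\|/m(B_i)$) nor the a~priori bound $\|w_i\|\le 2\varepsilon$ yields the missing segment-wise control of $\|A_{i-1}\cdots A_{j+1}\|$, so the weighted-norm scheme as stated does not close. The resolution is not to push harder on the fixed point but to change gears: use invariant plaques so that $R_i$ disappears, and cite Gan for the endpoints. After that, the intermediate bound is the two-line chain $d\le d_\cE+d_\cF\le \alpha^i d_\cE(x,p)+\alpha^{m-i}d_\cF(\varphi_{mT_0}(x),p)\le C\alpha^{\min\{i,m-i\}}d(x,\varphi_{mT_0}(x))$ the paper writes down.
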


\begin{proof}
By Remark~\ref{Rem:normal}, one can assume that $\varphi_{\theta(t)}^X(p)\in \exp_{\varphi_t(x)}\cN_{\varphi_t(x)}(2\varepsilon|X(\varphi_t(x))|)$ for any $t\in[0,T]$. Thus, one can lift the dynamics in the normal fibers and consider a sequence of rescaled sectional Poincar\'e maps $\{\cP_{T_0,\varphi_{iT_0}(x)}^*:\cN_{\varphi_{iT_0}(x)}\to \cN_{\varphi_{(i+1)T_0}(x)}\}$. Since $\varphi_{[0,mT_0]}(x)$ is a $(\eta,T_0)$-quasi hyperbolic orbit, by the Definition \ref{Def:quasi-hyperbolic}, for the dominated splitting $\cN_\Lambda=\cE\oplus\cF$ w.r.t. the linear Poincar\'e flow, one has $$\prod^{k-1}_{i=0}\parallel\psi^\ast_{T_0}|\mathcal {E}_{\varphi_{(i-1)T_0}(x)}\parallel\leq e^{-k\eta },~\prod^{m-1}_{i=k}m(\psi^\ast_{T_0}|\mathcal {F}_{\varphi_{iT_0}(x)})\geq {\rm e}^{(m-k)\eta},~{\rm for  }~k=0,\cdots,m.$$ 
By Liao's shadowing Lemma \ref{Thm:Liao-shadowing}, one has $d(\varphi_t(x), \varphi_{\theta(t)}(p)) \leq \varepsilon\lvert X(\varphi_t(x))\rvert,~\forall~t\in[0,T]$. For the maps $\{\cP_{T_0,\varphi_{iT_0}(x)}^*\}$, the periodic orbit of $p$ also admits a dominated splitting $\cN_x=\cE_p\oplus \cF_p$ with respect to $\{D\cP_{T_0,\varphi_{iT_0}(x)}^*\}$. Thus, they have plaques in the normal fibers. The distance in the $\cE$-plaques are denoted by $d_\cE$ and the the distance in the $\cF$-plaques are denoted by $d_\cF$. One can split the distance into $\cE$-distance and $\cF$-distance.  There is a constant $C\ge1$ such that $d_\cF(p,x)\le C d(p,x)$, $d_\cE(p,x)\le C d(p,x)$ and $d(p,x)\le d_\cE(p,x)+d_\cF(p,x)$. Therefore, 
\begin{eqnarray*}
d(\varphi_{iT_0}(x),\varphi_{\theta(iT_0)}(p))&\leq& d_{\cE}(\varphi_{iT_0}(x),\varphi_{\theta(iT_0)}(p))+d_{\cF}(\varphi_{iT_0}(x),\varphi_{\theta(iT_0)}(p))\\
&\leq& \alpha^id_\cE(x,p)+\alpha^{m-i}d_\cF(\varphi_{mT_0}(x),\varphi_{\theta(mT_0)}(p))\\
&\leq&\alpha^id(x,p)+\alpha^{m-i}d(\varphi_{mT_0}(x),\varphi_{\theta(mT_0)}(p))\\
&\leq& \alpha^{\min\{i,m-i\}}(d(x,p)+d(\varphi_{mT_0}(x),\varphi_{\theta(mT_0)}(p)))
\end{eqnarray*}	
By adapting a generalized shadowing lemma of S. Gan \cite[Theorem 1.1]{Gan02}, by enlarging $C$ if necessary, we have that $$d(x,p)\leq C d(x,\varphi_T(x))~\text{and}~d(\varphi_T(x), \varphi_{\theta(T)}(p)) \leq Cd(x,\varphi_T(x))$$ Therefore, $$d(\varphi_{iT_0}(x),\varphi_{\theta(iT_0)}(p))\leq C^2\cdot\alpha^{min\{i,m-i\}}\cdot d(x,\varphi_{mT_0}(x)),~\forall~i\in [0,m].$$
\end{proof}

\begin{proof}[Proof of Proposition \ref{Pro:time-control}]
  By Theorem \ref{Thm:Liao-shadowing}, given $\varepsilon_0>0$, $\eta >0$ and $T_0\geq 1$, for every $\varepsilon > 0$, there is $\tilde{\delta} >0$ such that for any $(\eta,T_0)$-quasi hyperbolic orbit segment $\varphi^X_{[0 , T]}(x)\subset\Lambda$ with $d(x,{\rm Sing}(X))\ge\varepsilon_0$, $d(\varphi_T(x),{\rm Sing}(X))\ge\varepsilon_0$ and $d(x,\varphi_T(x))<\tilde{\delta}$, there is a periodic point $p\in M$ and a $C^1$ strictly increasing function $\theta$ such that $\varphi_{\theta(T)}(p)=p$ and $d(\varphi_t(x),\varphi_{\theta(t)}(p))\leq \varepsilon\lvert X(\varphi_t(x))\rvert$ for any $t\in [0,T]$. Consider a time partition $0=t_0<t_1<t_2<t_3<\cdot\cdot\cdot<t_m=T$ with $t_{i+1}-t_i=T_0$. Taking $\alpha={\rm e}^{-\eta/2}\in(0,1)$, by Lemma \ref{L4}, there is $C_1>0$ such that $$d(\varphi_{\theta(t_i)}(p),\varphi_{t_i}(x))\leq C_1\cdot\alpha^{\min\{i,m-i\}}\cdot d(x,\varphi_T(x)),~i=0,1,\dots,m.$$ On the fact that $d(\varphi_t(x),\varphi_{\theta(t)}(p))\leq \varepsilon\lvert X(\varphi_t(x))\rvert$ for every $t\in [0,T]$, by Lemma~\ref{L3}, there is $C_2=C_2(T_0)>0$ such that for $\varphi_{\theta(t_i)}(p)\in N_{\varphi_{t_i}(x)}(\delta),~i=0,1,\dots,m$, one has $$|\theta(t_{i+1})-\theta(t_i)-(t_{i+1}-t_i)|\leq C_2\cdot d(\varphi_{t_i}(x),\varphi_{\theta(t_i)}(p)),~\forall~i=0,1,\dots,m.$$Let $N=N(\eta,T_0)=\dfrac{2 C_2\cdot C_1}{1-\alpha}$, one has    
\begin{eqnarray*}
  |\theta(T)-T|&\leq&\sum^{m-1}_{i=0}|\theta(t_{i+1})-\theta(t_i)-(t_{i+1}-t_i)|~\leq~C_2\cdot\sum^{m-1}_{i=0}d(\varphi_{t_i}(x),\varphi_{\theta(t_i)}(p))\\
  &\leq&C_2\cdot C_1\cdot d(\varphi_T(x),x)\sum^{m-1}_{i=0}\alpha^{\min\{i,m-i\}}
  \leq N \cdot d(\varphi_T(x),x).
\end{eqnarray*}
From the above discussion, for any $\tau\in\NN\cap[0,T]$ $$|\theta(\tau)-\tau|\leq\sum^{m-1}_{i=0}|\theta(t_{i+1})-\theta(t_i)-(t_{i+1}-t_i)|\leq N\cdot d(x,\varphi^X_T(x)).$$
\end{proof}

\section{Periodic Orbits of Vector Fields by Shadowing}\label{Sec:periodic orbits}

\subsection{Pesin block of vector fields} 
   For a regular hyperbolic ergodic measure $\mu$ and its hyperbolic Oseledec splitting $\mathcal{N}=\mathcal{E}^s\oplus\mathcal{F}^u$, by the definition of $\psi_t^*$, for $\mu$-a.e. $x$, one has
\begin{equation} \label{E1}
\begin{split} 
  \lambda^-(\mu)=\lim_{t\rightarrow\infty}\dfrac{1}{t}\log\parallel\psi^*_t|\mathcal{E}^s_x\parallel<0,~~~\lambda^+(\mu)=\lim_{t\rightarrow\infty}\dfrac{1}{t}\log m(\psi^*_t|\mathcal{F}^u_x)>0
\end{split} 
\end{equation}
    
\begin{Lemma}\label{L6}
   If the hyperbolic Oseledec splitting of a regular hyperbolic ergodic measure $\mu$ is a dominated splitting, then for any $\varepsilon >0,$ there exists $T(\varepsilon) \in \mathbb{R}$ such that for $\mu$-$a.e.~x\in M$ and every $T\geq T(\varepsilon),$ one has 
  $$\lim_{k\to\infty}\dfrac{1}{k\cdot T}\sum^{k-1}_{i=0}\log\parallel\psi^*_T|\mathcal{E}^s_{\varphi_{iT}(x)}\parallel {\rm exists \ and \ is \ contained \ in \ } [\lambda^-(\mu),\lambda^-(\mu)+\varepsilon),$$ 
  $$\lim_{k\to\infty}\dfrac{1}{k\cdot T}\sum^{k-1}_{i=0}\log\parallel\psi^*_{-T}|\mathcal{F}^u_{\varphi_{-iT}(x)}\parallel {\rm exists \ and \ is \ contained \ in \ } (-\lambda^+(\mu)-\varepsilon,-\lambda^+(\mu)].$$   
\end{Lemma}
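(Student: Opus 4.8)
\textbf{Proof proposal for Lemma~\ref{L6}.}
The plan is to obtain the two assertions from the Birkhoff ergodic theorem applied to the map $\varphi_T$ (for a suitable $T$), once we know that the time-$T$ cocycles $\log\|\psi^*_T|\cE^s\|$ and $\log\|\psi^*_{-T}|\cF^u\|$ are genuinely $\varphi_T$-integrable and that their $\mu$-averages approximate $T\lambda^-(\mu)$ and $-T\lambda^+(\mu)$ respectively. First I would record the elementary subadditivity: since $\psi^*_{s+t}|\cE^s_x = (\psi^*_t|\cE^s_{\varphi_s(x)})\circ(\psi^*_s|\cE^s_x)$, the function $t\mapsto \log\|\psi^*_t|\cE^s_x\|$ is subadditive along orbits, and by domination $\cN=\cE^s\oplus\cF^u$ the operator norms $\|\psi^*_t|\cE^s_x\|$ are uniformly bounded on compact time intervals (away from singularities, which have $\mu$-measure zero), so Kingman's subadditive ergodic theorem gives $\frac1t\log\|\psi^*_t|\cE^s_x\|\to\lambda^-(\mu)$ both in $L^1(\mu)$ and $\mu$-a.e.; the analogous statement holds for $\cF^u$ with $-\psi^*_{-t}$. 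This is the content already encoded in~(\ref{E1}); the point of the lemma is to upgrade the a.e. continuous-time limit to a clean discrete-time statement with a two-sided error bound, uniformly over $\mu$-a.e.\ base point.

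Next I would fix $\varepsilon>0$ and use the $L^1$-convergence above to pick $T(\varepsilon)$ so large that for every $T\ge T(\varepsilon)$,
\begin{align*}
\lambda^-(\mu)\le \frac1T\int \log\|\psi^*_T|\cE^s_x\|\,d\mu(x) < \lambda^-(\mu)+\varepsilon,\qquad
-\lambda^+(\mu)-\varepsilon < \frac1T\int \log\|\psi^*_{-T}|\cF^u_x\|\,d\mu(x)\le -\lambda^+(\mu);
\end{align*}
the left-hand inequalities here are just Kingman's monotone statement $\inf_t \frac1t\int\log\|\psi^*_t|\cE^s\|\,d\mu = \lim_t\frac1t\int\log\|\psi^*_t|\cE^s\|\,d\mu=\lambda^-(\mu)$ together with the fact that for each fixed $T$ the time-$T$ average $\frac1T\int$ is $\ge\lambda^-(\mu)$ by subadditivity. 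Then for such a fixed $T$, the function $g_T(x)=\log\|\psi^*_T|\cE^s_x\|$ is in $L^1(\mu)$ and $\mu$ is $\varphi_T$-invariant (though not necessarily $\varphi_T$-ergodic), so the Birkhoff ergodic theorem for $\varphi_T$ gives that $\frac1k\sum_{i=0}^{k-1} g_T(\varphi_{iT}(x))$ converges $\mu$-a.e.\ to $\mathbb{E}_\mu(g_T\mid \mathcal{I}_{\varphi_T})(x)$, the conditional expectation on the $\varphi_T$-invariant $\sigma$-algebra. Dividing by $T$ gives existence of the first limit in the statement; the same argument with $\log\|\psi^*_{-T}|\cF^u\|$ and the flow run backwards (using $\varphi_{-T}$-invariance of $\mu$) handles the second.

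It remains to see that the $\mu$-a.e.\ limit lies in the half-open interval $[\lambda^-(\mu),\lambda^-(\mu)+\varepsilon)$. The upper bound is immediate: the conditional expectation of $g_T$ has the same $\mu$-integral as $g_T$, and subadditivity along the orbit forces $\mathbb{E}_\mu(g_T\mid\mathcal I_{\varphi_T})(x)\le$ the a.e.\ value of $\lim_k\frac1{kT}\sum\le\frac1T\int g_T\,d\mu<\lambda^-(\mu)+\varepsilon$ after dividing by $T$ — more precisely, $\frac1{kT}\sum_{i=0}^{k-1}g_T(\varphi_{iT}x)\ge\frac1{kT}\log\|\psi^*_{kT}|\cE^s_x\|$, whose limit is $\lambda^-(\mu)$ for $\mu$-a.e.\ $x$ by Kingman, giving the lower bound $\lambda^-(\mu)$; and integrating the limit against $\mu$ gives a value $<\lambda^-(\mu)+\varepsilon$, so the limit is $\mu$-a.e.\ strictly below $\lambda^-(\mu)+\varepsilon$ — wait, this last step needs care, since a function with integral below a bound need not be pointwise below it. The clean fix is to invoke $\varphi_T$-ergodicity after all: because $\mu$ is $\varphi_1$-ergodic, the set of $T$ for which $\mu$ fails to be $\varphi_T$-ergodic is at most countable, so by slightly enlarging $T(\varepsilon)$ we may assume $\mu$ is $\varphi_T$-ergodic for our chosen $T$ (alternatively restrict to rational multiples and use that $\varphi_T$-ergodicity fails only when $T$ is a period resonance). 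Then the Birkhoff limit is the constant $\frac1T\int g_T\,d\mu$, which lies in $[\lambda^-(\mu),\lambda^-(\mu)+\varepsilon)$ by the choice of $T$, and the same for $\cF^u$. \emph{The main obstacle} is precisely this ergodicity-of-$\varphi_T$ subtlety: one must either argue that $\varphi_T$ is ergodic for a.e.\ (or a well-chosen) $T$, or else work with conditional expectations and separately verify the pointwise two-sided bound — the cleanest route is the first, justified by the standard fact that a $\varphi_1$-ergodic measure is $\varphi_T$-ergodic for all but countably many $T$.
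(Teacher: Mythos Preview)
Your approach matches the paper's: pick $T(\varepsilon)$ so that $\tfrac1T\int\log\|\psi^*_T|\mathcal E^s\|\,d\mu$ is within $\varepsilon$ of $\lambda^-(\mu)$, apply Birkhoff for $\varphi_T$, and obtain the lower bound from sub-multiplicativity of the norms. The paper routes integrability and continuity through the extended linear Poincar\'e flow on the transgression (Lemmas~\ref{lemma: continuous of elp}--\ref{ms}) so as to work with a continuous cocycle on a compact base, whereas you argue boundedness of $\|\psi^*_T|\cE^s\|$ directly; either is fine here. You are in fact more careful than the paper on the decisive point: the paper silently equates the $\varphi_T$-Birkhoff limit with the integral $\tfrac1T\int g_T\,d\mu$, tacitly assuming $\mu$ is $\varphi_T$-ergodic, while you flag this gap and note that the non-ergodic times are at most countable --- your fix yields the conclusion for all but countably many $T\ge T(\varepsilon)$, which is slightly weaker than the literal ``for every $T\ge T(\varepsilon)$'' but entirely sufficient for the only application (Proposition~\ref{P1}), where a single large $L$ is fixed.
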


\begin{proof}
   Let $R$ be the support of $\mu$ and $\widetilde{R}$ be the transgression of $R$. By Lemma \ref{Lem:dominated-splitting-closure}, $\widetilde R$ admits a dominated splitting ${\cal N}_{\widetilde R}SM={\mathcal E}^s\oplus {\mathcal F}^u$ w.r.t. the extended linear Poincar\'e flow. By Lemma~\ref{ms}, one has $$\lambda^-(\mu)=\lambda^-(\widetilde\mu)=\lim_{t\rightarrow +\infty}\frac{1}{t}\int \log\parallel\widetilde{\psi}_t|\mathcal{E}^s\parallel {\rm d}\widetilde\mu,$$ where $\widetilde\mu$ is the transgression of $\mu$. Therefore, for any $\varepsilon >0$, there is $T(\varepsilon)>0$ large enough such that for every $T\geq T(\varepsilon)$, one has $|\dfrac{1}{T}\int \log\parallel\widetilde{\psi}_T|\mathcal{E}^s_x\parallel {\rm d}\widetilde\mu-\lambda^-(\mu)|<{\varepsilon}$. By Lemma \ref{lemma: continuous of elp}, since every $\frac{X(x)}{\parallel X(x)\parallel}\in\widetilde R$ is a unit vector, for the fixed $T\geq T(\varepsilon)$, $\widetilde{\psi}_T(\cdot)$ is continuous on $NSM \triangleq\{(v_1,v_2):v_1\in S_xM,~v_2\in T_xM,~v_1\perp v_2\}$. Thus, by the Birkhoff ergodic theorem, one has $$\lim_{k\rightarrow\infty}\dfrac{1}{k\cdot T}\sum^{k-1}_{i=0}\log\parallel\psi^*_T|\mathcal{E}^s_{\varphi_{iT}(x)}\parallel=\dfrac{1}{T}\int\log\parallel\widetilde\psi_T|\mathcal{E}^s_x\parallel d\widetilde\mu <\lambda^-(\mu)+\varepsilon.$$ Since the norms are sub-multiplicative, one has $$\lim_{k\rightarrow\infty}\dfrac{1}{k\cdot T}\sum^{k-1}_{i=0}\log\parallel\psi^*_T|\mathcal{E}^s_{\varphi_{iT}(x)}\parallel\geq\lim_{t\rightarrow\infty}\dfrac{1}{t}\log\parallel\psi^*_t|\mathcal{E}^s_x\parallel=\lambda^-(\mu).$$ The conclusion for sub-bundle $\mathcal{F}^u$ can be obtained similarly.     
\end{proof}

\begin{Definition} \label{pesin block}
  Let $\mu$ be a regular hyperbolic ergodic measure of $X\in{\cal X}^1(M)$, ${\cal N}_\Lambda={\mathcal E}^s\oplus {\mathcal F}^u$ be the hyperbolic Oseledec splitting, where $\Lambda$ is a Borel set with $\mu$-total measure. Given $\lambda\in (0,1)$, $L>0$ and $k\in\mathbb{R}^+$, the \emph{Pesin block} $\Lambda^L_\lambda(k)$ is defined as:
\begin{eqnarray*}
  \Lambda_\lambda^L(k) = \{x\in \Lambda: &\prod\limits^{n-1}_{i=0}&\parallel   \psi^*_{L}|{\mathcal{E}^s_{\varphi_{iL}(x)}}\parallel \leq k \lambda^n,~\forall~n\geq 1,\\
  &\prod\limits^{n-1}_{i=0}&\parallel\psi^*_{-L}|{\mathcal{F}^u_{\varphi_{-iL}(x)}}\parallel \leq k \lambda^n,~\forall~n\geq 1,~d(x,{\rm Sing}(X)\cap \Lambda)\geq\frac{1}{k}~\}.
\end{eqnarray*} 
\end{Definition}
        
\begin{Proposition}\label{P1}
  If the hyperbolic Oseledec splitting of a regular hyperbolic measure $\mu$ is a dominated splitting, then the \emph{Pesin block} $\Lambda^L_\lambda(k)$ is a compact set such that
  $$\mu(\Lambda^L_\lambda(k))\rightarrow 1 {\rm \ as \ } k\to +\infty.$$ where $\lambda={\rm e}^{-\eta}, 0<\eta<\min\{-\lambda^-(\mu),\lambda^+(\mu)\}$, $L\geq T(\min\{-\lambda^-(\mu),\lambda^+(\mu)\}-\eta)$ as in Lemma \ref{L6}. 
\end{Proposition}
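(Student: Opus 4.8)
The plan is to establish the two claims—compactness of $\Lambda_\lambda^L(k)$ and the convergence $\mu(\Lambda_\lambda^L(k))\to 1$—separately, treating compactness as a routine closedness-plus-boundedness argument and the measure convergence via a Birkhoff/Egorov argument combined with Lemma~\ref{L6}.

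\textbf{Compactness.} First I would fix $L\geq T(\min\{-\lambda^-(\mu),\lambda^+(\mu)\}-\eta)$ and $\lambda=\mathrm{e}^{-\eta}$ as in the statement, and note that $\Lambda_\lambda^L(k)\subset M$, so it suffices to show it is closed. The defining conditions involve: for each fixed $n\geq 1$, the inequalities $\prod_{i=0}^{n-1}\|\psi^*_L|\mathcal{E}^s_{\varphi_{iL}(x)}\|\leq k\lambda^n$ and its $\mathcal{F}^u$-counterpart, together with $d(x,{\rm Sing}(X)\cap\Lambda)\geq 1/k$. The subtle point is that the bundles $\mathcal{E}^s$ and $\mathcal{F}^u$ are a priori only measurably defined (Oseledec splitting), so the maps $x\mapsto\|\psi^*_L|\mathcal{E}^s_{\varphi_{iL}(x)}\|$ are not obviously continuous. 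Here is where the domination hypothesis does its work: by Theorem~\ref{Thm:dominated-measure} (or directly the hypothesis) the hyperbolic Oseledec splitting \emph{is} a dominated splitting, hence by Lemma~\ref{Lem:dominated-splitting-closure} it extends continuously to a dominated splitting $\mathcal{N}_{\widetilde\Lambda}SM=\widetilde{\mathcal E}^s\oplus\widetilde{\mathcal F}^u$ over the transgression $\widetilde\Lambda$ of $\Lambda$, and on $\widetilde\Lambda$ the maps $v\mapsto\|\widetilde\psi^*_L|\widetilde{\mathcal E}^s_v\|$ are continuous by Lemma~\ref{lemma: continuous of elp}. Using $\psi^*_L(v)={\rm Proj}_N\widetilde\psi_L(X(x)/\|X(x)\|,v)$ and the rescaling, one transfers this to continuity of the relevant functions along the orbit of any $x$ with $X(x)/\|X(x)\|\in\widetilde\Lambda$. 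So a sequence $x_j\in\Lambda_\lambda^L(k)$ with $x_j\to x_\infty$ has $x_\infty\in\Lambda$ (by $\mu$-total measure one may work inside a closed full-measure set, or use that the extended objects are defined on the closure), the product inequalities pass to the limit for each fixed $n$ by continuity, and $d(x_\infty,{\rm Sing}(X)\cap\Lambda)\geq 1/k$ passes to the limit by continuity of distance; hence $\Lambda_\lambda^L(k)$ is closed and therefore compact.

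\textbf{Measure convergence.} For this I would fix $L$ as above and set $\varepsilon_0=\min\{-\lambda^-(\mu),\lambda^+(\mu)\}-\eta>0$, so that by Lemma~\ref{L6} applied at scale $T=L$, for $\mu$-a.e.\ $x$ the Birkhoff averages $\frac1{nL}\sum_{i=0}^{n-1}\log\|\psi^*_L|\mathcal{E}^s_{\varphi_{iL}(x)}\|$ converge to a value $<\lambda^-(\mu)+\varepsilon_0=-\eta+(\text{something}<0)$; more precisely the limit is $\leq\lambda^-(\mu)+\varepsilon_0<-\eta$ by the choice of $\varepsilon_0$, and similarly the $\mathcal{F}^u$-sum converges to something $<-\eta$. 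Consequently for $\mu$-a.e.\ $x$ there is $n_0(x)$ with $\prod_{i=0}^{n-1}\|\psi^*_L|\mathcal{E}^s_{\varphi_{iL}(x)}\|\leq\lambda^n$ for all $n\geq n_0(x)$ (absorbing the finitely many initial terms), and the same on the unstable side; also $d(x,{\rm Sing}(X)\cap\Lambda)>0$ for $\mu$-a.e.\ $x$ since $\mu$ is regular (not supported on singularities) and $\mu({\rm Sing}(X)\cap\Lambda)=0$. Now define the measurable function $x\mapsto\kappa(x)$ as the smallest $k$ for which all three defining conditions of $\Lambda_\lambda^L(k)$ hold at $x$; the preceding paragraph shows $\kappa(x)<\infty$ for $\mu$-a.e.\ $x$. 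Since $\Lambda_\lambda^L(k)=\{\kappa\leq k\}$ is increasing in $k$ with $\bigcup_k\Lambda_\lambda^L(k)$ of full measure, continuity of measure from below gives $\mu(\Lambda_\lambda^L(k))\to 1$ as $k\to\infty$.

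\textbf{Main obstacle.} The step I expect to be most delicate is the compactness proof: one must carefully justify that the functions $x\mapsto\prod_{i=0}^{n-1}\|\psi^*_L|\mathcal{E}^s_{\varphi_{iL}(x)}\|$ behave continuously enough along limits, which genuinely requires passing through the extended linear Poincaré flow on the transgression (Lemmas~\ref{Lem:dominated-splitting-closure} and~\ref{lemma: continuous of elp}) rather than working naively with the measurable Oseledec bundles on $M$ itself; the bookkeeping of rescaling factors $\|X(x)\|/\|X(\varphi_L(x))\|$ and the handling of limit points whose normalized flow direction lies in $\widetilde\Lambda$ but may approach the singular set must be done with the $d(x,{\rm Sing}(X))\geq 1/k$ cutoff keeping everything uniformly regular. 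The measure-convergence half, by contrast, is essentially a direct consequence of Lemma~\ref{L6} plus monotone continuity of measure and should present no serious difficulty.
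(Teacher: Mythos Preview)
Your proposal is correct and follows essentially the same route as the paper. For the measure convergence, both you and the paper invoke Lemma~\ref{L6} to obtain, for $\mu$-a.e.\ $x$, a finite constant $C(x)$ controlling all the products, and then use monotone continuity of measure on the increasing family $\{\Lambda_\lambda^L(k)\}_k$ (the paper does this via an auxiliary set $\Gamma_\lambda^L(k)$ that drops the singularity-distance condition, then handles that condition separately using $\mu({\rm Sing}(X))=0$; your function $\kappa(x)$ packages the same idea). For compactness, the paper dispatches it in a single sentence (``By Definition~\ref{pesin block}, $\Lambda_\lambda^L(k)$ is a compact set''), whereas you spell out why the defining inequalities are closed conditions: the domination hypothesis, via Lemmas~\ref{Lem:dominated-splitting-closure} and~\ref{lemma: continuous of elp}, makes the bundles and hence the functions $x\mapsto\|\psi^*_L|\mathcal E^s_{\varphi_{iL}(x)}\|$ continuous. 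Your elaboration is exactly the content the paper is tacitly relying on, so there is no genuine divergence in strategy.
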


\begin{proof}
  By Definition \ref{pesin block}, $\Lambda^L_\lambda(k)$ is a compact set. By the hyperbolicity of the regular measure and the choice of $\lambda$, for $\mu$-a.e. $x\in M$, by Lemma \ref{L6}, there is $C(x)$ satisfying$$\prod^{n-1}_{i=0}\parallel   \psi^*_{L}|{\mathcal{E}^s_{\varphi_{iL}(x)}}\parallel \leq C(x)\cdot \lambda^n,~\prod^{n-1}_{i=0}\parallel\psi^*_{-L}|{\mathcal{F}^u_{\varphi_{-iL}(x)}}\parallel \leq C(x)\cdot \lambda^n,~\forall ~n\geq 1.$$  Let $\Gamma^L_\lambda(k) = \{x\in \Lambda: \prod^{n-1}_{i=0}\parallel \psi^*_L|{\mathcal{E}^s_{\varphi_{iL}(x)}}\parallel \leq k \lambda^n, \prod^{n-1}_{i=0}\parallel\psi^*_{-L}|{\mathcal{F}^u_{\varphi_{-iL}(x)}}\parallel \leq k \lambda^n, \forall~n\geq 1\}$. Therefore, $$\mu(\bigcup_{k>0}\Gamma^L_\lambda(k))=1.$$ For two real numbers $0<k_1<k_2$, one has $\Gamma^L_\lambda(k_1)\subset\Gamma^L_\lambda(k_2)$. Consequently, $$\mu(\Gamma^L_\lambda(k))\rightarrow 1~~~ {\rm as }~~~k\rightarrow\infty.$$ According to the facts that $\Lambda^L_\lambda(k)\subset \Gamma^L_\lambda(k)$ and $\mu({\rm Sing}(X))=0$, for any $\varepsilon>0$, there is $K=K(\varepsilon)\in\mathbb{N}$ such that $| \mu(\Gamma^L_\lambda(k))-\mu(\Lambda^L_\lambda(k))| <\varepsilon,~\forall~k\geq K$. Then $$\mu(\Lambda^L_\lambda(k))\rightarrow 1~{\rm as }~k\rightarrow\infty.$$   	
\end{proof}

\subsection{Constructing many periodic orbits: proof of Theorem \ref{Thm:measure-exponential}  }
   We have the following version of Poincar\'e Recurrence Theorem for flows. It can be deduced by the case of diffeomorphisms. Hence, the proof is omitted.
\begin{Proposition}\label{Pro:poincare-recurrence}
   Let $\mu$ be an $\varphi_t$-invariant measure. For any fixed time $t_0$ and any set $B$ with positive $\mu$-measure, there is a set $R\subset B$ with $\mu(R)=\mu(B)$ and a sequence of integers $0<n_1< n_2< n_3<\dots<n_i<\cdots$ such that for every $x\in R,$  
\begin{enumerate}
  \item $\varphi_{n_i\cdot t_0}(x)\in R,$ for all $i\in \mathbb{N};$ 	
  \item $d(x,\varphi_{n_i\cdot t_0}(x))\rightarrow 0,$ as $i\rightarrow\infty;$
\end{enumerate}
\end{Proposition}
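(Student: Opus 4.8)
The plan is to reduce everything to the classical Poincar\'e recurrence theorem for the single measure-preserving transformation $f=\varphi_{t_0}$ (it preserves $\mu$ because $\mu$ is $\varphi_t$-invariant), and then to upgrade the bare conclusion ``$\mu$-a.e.\ $x\in B$ returns to $B$ infinitely often'' into ``the returns land in a full-measure subset $R\subset B$ and accumulate at $x$''. First I would fix a countable basis $\{O_j\}_{j\ge1}$ of $M$; for each $j$ with $\mu(B\cap O_j)>0$, the discrete theorem says that $N_j:=\{x\in B\cap O_j:\ \#\{n\ge1:\ f^n(x)\in B\cap O_j\}<\infty\}$ is $\mu$-null.

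Next I would set $Z$ equal to the union of all the $N_j$ together with the set of points of $B$ all of whose basis neighbourhoods are $\mu$-negligible in $B$ (this last set is easily seen to be $\mu$-null), and forward-saturate it: $\widehat{Z}=\bigcup_{n\ge0}f^{-n}(Z)$. Then $\widehat{Z}$ is again $\mu$-null (a countable union of null sets, $f$ preserving $\mu$) and satisfies $f^{-1}(\widehat{Z})\subset\widehat{Z}$, so no forward orbit started outside $\widehat{Z}$ ever meets it. Put $R=B\setminus\widehat{Z}$, so $\mu(R)=\mu(B)$. For any $x\in R$ and any basis element $O_j\ni x$ one has $x\in(B\cap O_j)\setminus N_j$, hence $f^n(x)\in B\cap O_j$ for infinitely many $n$, and each such $f^n(x)$ lies in $B\setminus\widehat{Z}=R$. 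Since $x$ has basis neighbourhoods of arbitrarily small diameter, all of positive $\mu$-measure in $B$, picking one return of $x$ into each of a shrinking sequence of them produces integers $0<n_1<n_2<\cdots$ (depending on $x$) with $\varphi_{n_it_0}(x)=f^{n_i}(x)\in R$ and $d(x,\varphi_{n_it_0}(x))\to0$, which are exactly conclusions (1) and (2).

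The only point at which one does more than invoke the discrete recurrence theorem --- hence the crux --- is arranging that these recurrences simultaneously return into the full-measure set $R$ and cluster at the base point; the forward saturation $\widehat{Z}$ takes care of the former and second countability of $M$ of the latter. The passage from diffeomorphisms to flows is cost-free here, because the argument only ever uses the time-$t_0$ map $\varphi_{t_0}$, so no reparametrization and no singularity issue enters.
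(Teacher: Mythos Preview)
Your argument is correct and is precisely the route the paper indicates: the authors write only that the proposition ``can be deduced by the case of diffeomorphisms'' and omit the proof, and you carry out exactly that deduction via $f=\varphi_{t_0}$, using second countability to force clustering and the forward saturation $\widehat{Z}$ to keep the returns inside $R$. One small remark: the paper's phrasing places the sequence $(n_i)$ before the quantifier ``for every $x\in R$'', which could be misread as a single sequence working for all $x$; you correctly take the sequence to depend on $x$, which is the only reading under which the statement is true and the only one needed downstream.
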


In fact, one can have the following stronger recurrent property

\begin{Proposition}\label{Pro:strong-recurrence}
Assume that $f$ is a homeomorphism on a compact metric space $M$. Let $\mu$ be an ergodic invariant measure of $f$. If $\Lambda$ is a set with positive measure of $\mu$, then given $\delta>0$ and $\varepsilon>0$, we have that
$$\lim_{n\to\infty}\mu(\Lambda_n)=\mu(\Lambda),$$
where
$$\Lambda_n=\{x\in\Lambda:~\exists~ m\in[n,(1+\varepsilon)n],~f^m(x)\in\Lambda,~d(f^m(x),x)<\delta\}.$$
\end{Proposition}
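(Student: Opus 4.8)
The plan is to deduce Proposition~\ref{Pro:strong-recurrence} from the ordinary Poincar\'e recurrence theorem together with the ergodic theorem, by a covering/pigeonhole argument on return times. Fix $\delta>0$, $\varepsilon>0$, and the set $\Lambda$ with $\mu(\Lambda)>0$. Cover $\Lambda$ (up to a set of arbitrarily small measure) by finitely many sets $C_1,\dots,C_N$ each of diameter less than $\delta$; it suffices to prove the statement with $\Lambda$ replaced by each piece $C_j$ of positive measure and then sum, since $d(f^m(x),x)<\delta$ is automatic once $x$ and $f^m(x)$ lie in the same piece. So we may assume $\operatorname{diam}(\Lambda)<\delta$ and it remains to control the return time: we must show that for $\mu$-a.e.\ $x\in\Lambda$ there are infinitely many $m$ with $f^m(x)\in\Lambda$ and, moreover, at least one such $m$ in every window $[n,(1+\varepsilon)n]$ for all large $n$.

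The key point is a density statement for the set of return times. By the Birkhoff ergodic theorem applied to the indicator $\mathbf 1_\Lambda$, for $\mu$-a.e.\ $x$ one has $\frac1n\#\{0\le i<n:~f^i(x)\in\Lambda\}\to\mu(\Lambda)>0$. First I would record that this forces, for a.e.\ $x\in\Lambda$ and all large $n$, the existence of a return time of $f$ into $\Lambda$ inside the interval $[n,(1+\varepsilon)n]$: indeed if the window $[n,(1+\varepsilon)n]$ contained no index $i$ with $f^i(x)\in\Lambda$, then the visit-count up to $(1+\varepsilon)n$ equals the visit-count up to $n$, and comparing this with the limit $\mu(\Lambda)$ along both times $n$ and $(1+\varepsilon)n$ yields $\mu(\Lambda)=(1+\varepsilon)\mu(\Lambda)$, a contradiction. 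Thus for a.e.\ $x\in\Lambda$ there is $n_0(x)$ with: for every $n\ge n_0(x)$ there exists $m\in[n,(1+\varepsilon)n]$ with $f^m(x)\in\Lambda$, hence (same piece) $d(f^m(x),x)<\delta$. Writing $\Lambda_{n_0}^\ast=\{x\in\Lambda:~n_0(x)\le n_0\}$ we get $\mu(\bigcup_{n_0}\Lambda_{n_0}^\ast)=\mu(\Lambda)$, and since $\Lambda_{n_0}^\ast\subset\Lambda_n$ for all $n\ge n_0$, it follows that $\liminf_{n\to\infty}\mu(\Lambda_n)\ge\mu(\Lambda_{n_0}^\ast)$ for each $n_0$; letting $n_0\to\infty$ gives $\liminf_n\mu(\Lambda_n)\ge\mu(\Lambda)$. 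The reverse inequality $\mu(\Lambda_n)\le\mu(\Lambda)$ is trivial since $\Lambda_n\subset\Lambda$, so $\lim_n\mu(\Lambda_n)=\mu(\Lambda)$.

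The main obstacle, and the only place where care is genuinely needed, is the passage from the finite-diameter reduction back to the original $\Lambda$: one must make sure the small-measure error sets discarded in the covering step, together with the null set where Birkhoff fails, do not interfere with the conclusion. This is handled by a standard $\epsilon/2^j$ exhaustion: choose the cover so that $\mu(\Lambda\setminus\bigcup_j C_j)<\epsilon'$, prove $\liminf_n\mu((\Lambda_n)\cap C_j)\ge\mu(C_j)$ for each $j$ by the argument above (applied with the metric condition automatic inside $C_j$), sum over $j$ to get $\liminf_n\mu(\Lambda_n)\ge\mu(\Lambda)-\epsilon'$, and let $\epsilon'\to0$. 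Everything else is routine: ergodicity is used only through the a.e.\ convergence of Birkhoff averages of $\mathbf 1_\Lambda$, and no structure of $M$ beyond compactness (for the finite cover) and the metric (for $\delta$) is needed.
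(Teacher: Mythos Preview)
Your proposal is correct and follows essentially the same route as the paper: reduce to small-diameter pieces (the paper uses a finite measurable partition $\{P_i\}$ of $\Lambda$ with ${\rm diam}(P_i)\le\delta$, you use a finite cover up to small measure), and then apply Birkhoff's theorem to $\chi_{P_i}$ to force a visit to $P_i$ in every window $[n,(1+\varepsilon)n]$ for large $n$. The paper phrases the Birkhoff step via the explicit two-sided estimates $S_n\le n\mu(P_i)(1+\varepsilon/3)$ and $S_{[n(1+\varepsilon)]}\ge n\mu(P_i)(1+2\varepsilon/3)$, while you argue by the equivalent contradiction $\mu(C_j)=(1+\varepsilon)^{-1}\mu(C_j)$; these are the same idea. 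One cosmetic point: make your pieces $C_j$ disjoint (e.g.\ replace $C_j$ by $C_j\setminus\bigcup_{i<j}C_i$), so that ``sum over $j$'' is literally valid; the paper avoids this by working with a partition from the start.
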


\begin{proof}Given $\delta>0$ and $\varepsilon>0$, take a finite measurable partition ${\cal P}=\{P_i\}^{\ell}_{i=1}$ such that $${\rm diam} (P_i)\leq \delta,~P_i\subset \Lambda{\rm \ or \ } P_i\cap \Lambda=\emptyset,~\text{for $ i=1,2,\cdots,\ell$}.$$
Consider the set $$\Lambda_n({\cal P})\triangleq\{x\in\Lambda: \exists~i\in[1,\ell] {\rm~and}~m\in[n,(1+\varepsilon)n], {\rm s.t.}~f^m(x)\in \Lambda,~x,f^m(x)\in P_i\in{\mathcal P}\}.$$ Fix $P_i\subset\Lambda$, define $${\cal P}^i_{n,\varepsilon}=\{x\in P_i: \sum^{n-1}_{j=0}\chi_{P_i}(f^j(x))\leq n\mu(P_i)(1+\dfrac{\varepsilon}{3}),\sum^{[n(1+\varepsilon)]}_{j=0}\chi_{P_i}(f^j(x))\geq n\mu(P_i)(1+\dfrac{2\varepsilon}{3})\},$$ where $\chi_{P_i}$ is the characteristic function of set $P_i$. Therefore, ${\cal P}^i_{n,\varepsilon}\subset P_i\bigcap\Lambda_n({\cal P})$. By the Birkhoff ergodic theorem, $\mu(P_i\setminus{\cal P}^i_{n,\varepsilon})\rightarrow 0$ as $n\rightarrow\infty$. This implies the proposition.
\end{proof}

Now we are ready to prove Theorem \ref{Thm:measure-exponential}.
\begin{proof}[Proof of Theorem \ref{Thm:measure-exponential}]The proof will follow some steps.
\paragraph{Choose a Pesin block.}
   Since the hyperbolic Oseledec splitting $\mathcal{N}=\mathcal{E}^s\oplus\mathcal{F}^u$ of the ergodic hyperbolic measure $\mu$ is a dominated splitting, one can take $\lambda\in(0,1)$ which is bigger than and close to ${\rm e}^{-\min\{-\lambda^-(\mu),~\lambda^+(\mu)\}}$ and $L$ as Lemma \ref{L6} to define Pesin block (Definition \ref {pesin block}) $\Lambda^L_\lambda(k):$ 
\begin{eqnarray*}
 \Lambda_\lambda^L(k) = \{x\in \Lambda: &\prod\limits^{n-1}_{i=0}&\parallel   \psi^*_{L}|{\mathcal{E}^s_{\varphi_{iL}(x)}}\parallel \leq k \lambda^n,~\forall~n\geq 1,\\
 &\prod\limits^{n-1}_{i=0}&\parallel\psi^*_{-L}|{\mathcal{F}^u_{\varphi_{-iL}(x)}}\parallel \leq k \lambda^n,~\forall~n\geq 1,~d(x,{\rm Sing}(X)\cap \Lambda)\geq\frac{1}{k}~\}.
\end{eqnarray*}
   By Proposition \ref {P1}, taking $k>0$ large enough such that $\mu(\Lambda_\lambda^L(k))>0$. One can fix $\lambda_0\in (\lambda,1)$, then there is $j=j(k)\in \mathbb{N}^+$ such that for any $x\in\Lambda^L_\lambda(k)$, $$\prod\limits^{n-1}_{i=0}\parallel   \psi^*_{j(k)L}|{\mathcal{E}^s_{\varphi_{ij(k)L}(x)}}\parallel \leq\lambda_0^n,~~\prod\limits^{n-1}_{i=0}\parallel\psi^*_{-j(k)L}|{\mathcal{F}^u_{\varphi_{-ij(k)L}(x)}}\parallel \leq \lambda_0^n,~\forall~ n\geq 1.$$ Then, for the set $\Lambda^{L_0}_{\lambda_0}(k)$ defined by 
\begin{eqnarray*}
 \Lambda_{\lambda_0}^{L_0}(k) \triangleq \{x\in \Lambda: &\prod\limits^{n-1}_{i=0}&\parallel   \psi^*_{j(k)L}|{\mathcal{E}^s_{\varphi_{ij(k)L}(x)}}\parallel \leq  \lambda_0^n,~\forall~n\geq 1,\\
 &\prod\limits^{n-1}_{i=0}&\parallel\psi^*_{-j(k)L}|{\mathcal{F}^u_{\varphi_{-ij(k)L}(x)}}\parallel \leq  \lambda_0^n,~\forall~n\geq 1,~d(x,{\rm Sing}(X)\cap \Lambda)\geq\frac{1}{k}~\},
\end{eqnarray*} 
   where $L_0=jL$, we have $\mu(\Lambda_{\lambda_0}^{L_0}(k))\ge\mu(\Lambda_\lambda^L(k))>0$. Hereafter we fix this $k$. By Proposition \ref{Pro:poincare-recurrence}, taking $B=\Lambda_{\lambda_0}^{L_0}(k)$, one has that for $\mu$-a.e. $x\in \Lambda_{\lambda_0}^{L_0}(k)$, the forward orbit of $x$ will return to $\Lambda_{\lambda_0}^{L_0}(k)$ and will be arbitrarily close to $x$. Let $\eta_0=-\log\lambda_0$. If $\varphi_{nL_0}(x)\in\Lambda_{\lambda_0}^{L_0}(k)$ for some $n\in{\mathbb N}$, then $\varphi_{[0,nL_0]}(x)$ is $(\eta_0, L_0)$-quasi hyperbolic orbit arc.

\paragraph{The shadowing constants.}
    Let $C=\max\{1,\max_{x\in M}\lvert X(x)\rvert\}$. Given $\varepsilon_0=1/k$, $\eta=\eta_0$, $T_0=L_0$ and $\varepsilon>0$, for $\varepsilon_1=\varepsilon/3C$, by Theorem \ref{Thm:Liao-shadowing}, there is $\delta=\delta(\varepsilon)$ much smaller than $\varepsilon$ such that for any $x,\varphi_{nL_0}(x)\in\Lambda_{\lambda_0}^{L_0}(k)$, if $d(x,\varphi_{nL_0}(x))<\delta$, then there is a point $p\in M$ and $C^1$ strictly increasing function $\theta: [0,nL_0]\rightarrow \mathbb{R}$ such that $p=\varphi_{\theta(n L_0)}(p)$ and $d(\varphi_t(x),\varphi_{\theta(t)}(p))\leq\varepsilon_1\lvert X(\varphi_t(x))\rvert<\varepsilon/3$ for all $t\in[0,nL_0]$. Moreover, by Proposition \ref{Pro:time-control}, one has $|\theta(t)-t|\le N\delta$ for any integer $t\in[0,nL_0]$, where $N$ is a constant which is independent of $x$ and $n$. One can also assume that $N\delta$ is much smaller than $\varepsilon$.

\paragraph{A seperation set $K_n$.} For $\varepsilon>0$ and $n\in {\mathbb N}$, we claim that there is a finite set $K_n=K_n(k,\varepsilon)\subset \Lambda^{L_0}_{\lambda_0}(k)$ with the following properties:
\begin{enumerate}
  \item\label{I.1} For points $x,y\in K_n$, there is an integer  $t\in[0,nL_0]$ such that $d(\varphi_t(x),\varphi_t(y))>\varepsilon$;

  \item\label{I.2} For any $x\in K_n$, there is an integer $m=m(n)$ with $n<m\leq(1+\varepsilon)n$ such that $\varphi_{mL_0}(x)\in \Lambda^{L_0}_{\lambda_0}(k)$ and $d(x,\varphi_{mL_0}(x))<\delta(\varepsilon)$;

  \item\label{I.3} $\liminf\limits_{\varepsilon\rightarrow 0}\liminf\limits_{n\to\infty}\dfrac{1}{nL_0}\log \#K_{n}\geq {\rm h}_{\mu}(\varphi_1).$
\end{enumerate}    
\paragraph{The construction of $K_n$.} Now, we give the precise construction of $K_n$.   We consider the following sets:
$$\Lambda_{\lambda_0}^{L_0}(k,n)=\{x\in\Lambda_{\lambda_0}^{L_0}(k):~\exists m\in[n,(1+\varepsilon)n],~\varphi_{mL_0}(x)\in\Lambda,~d(x,\varphi_{mL_0}(x))<\delta(\varepsilon)\}.$$
By the construction, we have that $\mu(\Lambda_{\lambda_0}^{L_0}(k))>0$. Apply Proposition~\ref{Pro:strong-recurrence} by taking $f=\varphi_{L_0}$, one has
$$\lim_{n\to\infty}\mu(\Lambda_{\lambda_0}^{L_0}(k,n))=\mu(\Lambda_{\lambda_0}^{L_0}(k)).$$ 

We take a maximal choice of $K_n=K_n(k,\varepsilon)\subset \Lambda_{\lambda_0}^{L_0}(k,n)$ such that Item~\ref{I.1} is satisfied. By the definition of $\Lambda_{\lambda_0}^{L_0}(k,n)$, Item~\ref{I.2} is satisfied.

For Item~\ref{I.3}, let us recall the definition of the metric entropy by Katok \cite[Theorem 1.1]{Katok}, see also Section~\ref{Sec:reduction}. By the maximality of $K_n$, one has 
$$\#K_{n}\geq  N_{\varphi_1}(nL_0,\varepsilon,1-\mu(\Lambda^{L_0}_{\lambda_0}(k,n))).$$ 
 Thus, $$\liminf_{\varepsilon\rightarrow 0}\liminf_{n\to\infty}\dfrac{1}{nL_0}\log \#K_{n}\geq \liminf_{\varepsilon\rightarrow 0}\liminf_{n\to\infty}\dfrac{1}{nL_0}\log N_{\varphi_1}(nL_0,\varepsilon,1-\mu(\Lambda^{L_0}_{\lambda_0}(k,n)))\geq {\rm h}_{\mu}(\varphi_1).$$ The construction of $K_n$ is hence complete. 
\paragraph{Estimate the growth rate of the periodic orbits.}   
Now we can complete the poof of Theorem \ref{Thm:measure-exponential}. For every $x\in K_n$, there is $m_x$ with $n\le m_x\leq n(1+\varepsilon)$ such that $\varphi_{m_xL_0}(x)\in \Lambda_{\lambda_0}^{L_0}(k)$. By Theorem \ref{Thm:Liao-shadowing}, there is a strictly increasing map $\theta_x:[0,m_xL_0]\rightarrow\mathbb{R}$ and a periodic point $p=p_x$ of period $\theta(m_xL_0)$ such that 
$$d(\varphi_t(x),\varphi_{\theta_x(t)}(p))<\varepsilon_1\lvert X(\varphi_t(x))\rvert<\varepsilon/3,~~~\forall t\in[0,m_xL_0].$$ 
 By Proposition~\ref{Pro:time-control}, one has that 
 $$| \theta_x(\tau)-\tau|\leq N\cdot d(x,\varphi_{mL_0}(x))\leq N\delta,~~~\forall \tau\in\NN\cap [0,mL_0].$$

For two different points $x,y\in K_n$, by the construction of $K_n$, there is $j\in\NN\cap [0,nL_0]$ such that $d(\varphi_j(x),\varphi_j(y))>\varepsilon$. Thus,
\begin{eqnarray*}
  d(\varphi_{\theta_x(j)}(p_x),\varphi_{\theta_y(j)}(p_y))&\geq& d(\varphi_{j}(x),\varphi_{j}(y))-d(\varphi_{j}(x),\varphi_{\theta_x(j)}(p_x))-d(\varphi_{j}(y),\varphi_{\theta_y(j)}(p_y))\\
  &>& \varepsilon-\varepsilon/3-\varepsilon/3=\varepsilon/3.
\end{eqnarray*}    
In fact, we have the following disjoint property:
\begin{Claim}
$\varphi_{(-\varepsilon/32C,~\varepsilon/32C)}(p_x)\cap\varphi_{(-\varepsilon/32C,~\varepsilon/32C)}(p_y)=\emptyset$, where $C=\sup\limits_{z\in M}\{\lVert X(z)\rVert\}<\infty$.	
\end{Claim}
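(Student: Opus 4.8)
The plan is to show that the two time-segments $\varphi_{(-\varepsilon/32C,\varepsilon/32C)}(p_x)$ and $\varphi_{(-\varepsilon/32C,\varepsilon/32C)}(p_y)$ cannot intersect by deriving a contradiction from any putative common point. Suppose $z=\varphi_s(p_x)=\varphi_{s'}(p_y)$ with $|s|,|s'|<\varepsilon/32C$. The strategy is to transport this coincidence back to time $\theta_x(j)$ and $\theta_y(j)$ for the integer $j\in\NN\cap[0,nL_0]$ furnished by Item~\ref{I.1} of the construction of $K_n$, where we know $d(\varphi_{\theta_x(j)}(p_x),\varphi_{\theta_y(j)}(p_y))>\varepsilon/3$. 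The point is that a common point on the two periodic orbits, reached within a short time window, forces $\varphi_{\theta_x(j)}(p_x)$ and $\varphi_{\theta_y(j)}(p_y)$ to be close, contradicting this lower bound.

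First I would make the reduction to orbit coincidence: if the two flowed-out segments meet, then $\mathrm{Orb}(p_x)$ and $\mathrm{Orb}(p_y)$ share a point, so in fact $\mathrm{Orb}(p_x)=\mathrm{Orb}(p_y)$; write $p_y=\varphi_{c}(p_x)$ for some shift $c$, which by the intersection hypothesis can be chosen with $|c|<\varepsilon/16C$ (the difference of two times each of size $<\varepsilon/32C$). Next I would track how this shift propagates: since $\varphi_{\theta_y(j)}(p_y)=\varphi_{\theta_y(j)+c}(p_x)$, we get
$$d\bigl(\varphi_{\theta_x(j)}(p_x),\varphi_{\theta_y(j)}(p_y)\bigr)=d\bigl(\varphi_{\theta_x(j)}(p_x),\varphi_{\theta_y(j)+c}(p_x)\bigr)\le |X|_\infty\cdot\bigl|\theta_x(j)-\theta_y(j)-c\bigr|,$$
using that the flow moves at speed at most $C=\sup_{z}\|X(z)\|$. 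So it suffices to bound $|\theta_x(j)-\theta_y(j)-c|$. Here the time-control Proposition~\ref{Pro:time-control} is the key input: it gives $|\theta_x(j)-j|\le N\delta$ and $|\theta_y(j)-j|\le N\delta$, hence $|\theta_x(j)-\theta_y(j)|\le 2N\delta$, and since we arranged $N\delta$ to be much smaller than $\varepsilon$, together with $|c|<\varepsilon/16C$ we obtain $C\cdot|\theta_x(j)-\theta_y(j)-c|<\varepsilon/3$, the desired contradiction.

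I expect the main obstacle to be the bookkeeping of constants: one must verify that the choice of $\varepsilon/32C$ as the window half-width, the bound $|c|<\varepsilon/16C$, the estimate $2N\delta$, and the factor $C$ from the flow speed all combine to land strictly below $\varepsilon/3$, and this requires going back to the "shadowing constants'' paragraph to confirm that $\delta$ (and hence $N\delta$) was indeed chosen small enough relative to $\varepsilon$ after $N$ was fixed. A secondary subtlety is the case where $j$ is such that $\theta_x$ or $\theta_y$ is not defined at $j$ because $j$ exceeds $m_xL_0$ or $m_yL_0$; but since $j\in[0,nL_0]$ and $m_x,m_y\ge n$, both maps are defined there, so this does not actually arise. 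Once the constants are pinned down the contradiction is immediate, so the proof is short; the delicate part is purely the inequality chase. I would also remark that this disjointness is exactly what converts the separation of the points $p_x$ into a lower bound on $\#P_T(X)$ weighted by periods, since the segments of length $\sim\varepsilon/16C$ around each $p_x$ are pairwise disjoint and all periods $\theta_x(m_xL_0)$ lie in a controlled range around $n L_0$.
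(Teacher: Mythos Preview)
Your proposal is correct and follows essentially the same approach as the paper: both use Proposition~\ref{Pro:time-control} to bound $|\theta_x(j)-\theta_y(j)|\le 2N\delta$, then combine with the flow-speed bound $C$ to contradict $d(\varphi_{\theta_x(j)}(p_x),\varphi_{\theta_y(j)}(p_y))>\varepsilon/3$. Your contradiction argument (assume a common point, extract the shift $c$ with $|c|<\varepsilon/16C$, and bound $C|\theta_x(j)-\theta_y(j)-c|$) is in fact cleaner than the paper's version, which shows disjointness of the flow segments around $\varphi_{\theta_x(j)}(p_x)$ and $\varphi_{\theta_y(j)}(p_y)$ without making explicit the final translation back to $p_x,p_y$; your formulation closes that step directly.
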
   
\begin{proof}[Proof of the Claim] 
By Proposition \ref{Pro:time-control}, taking $\delta\in(0,\varepsilon/64CN)$, one has $|\theta_x(j)-j|\le N\delta$ and $|\theta_y(j)-j|\le N\delta$. Therefore, 
$$|\theta_x(j)-\theta_y(j)|\le |\theta_x(j)-j|+|\theta_y(j)-j|\le 2N\delta<\varepsilon/32C.$$ 
Thus, one has that for any $t\in(-t_0, t_0)$, 
$$d(\varphi_{\theta_x(j)+t}(p_x),\varphi_{\theta_x(j)}(p_x))<\varepsilon/16 ~~~{\rm and } ~~~d(\varphi_{\theta_y{(j)+t}}(p_y),\varphi_{\theta_y(j)}(p_y))<\varepsilon/16.$$
 Consequently, for any $t,s\in(-t_0,t_0)$,
\begin{eqnarray*}
   d(\varphi_{\theta_x(j)+t}(p_x),\varphi_{\theta_y(j)+s}(p_y))&\geq& d(\varphi_{\theta_x(j)}(p_x),\varphi_{\theta_y(j)}(p_y))-d(\varphi_{\theta_x(j)+t}(p_x),\varphi_{\theta(j)}(p_x))\\&-&d(\varphi_{\theta_y(j)+s}(p_y),\varphi_{\theta_y(j)}(p_y))> \varepsilon/3-\varepsilon/4>0.
\end{eqnarray*}    
This implies the claim.
\end{proof} 
From the claim, in ${\rm Orb}(p_x)$, any orbit segment $\varphi_{[0,1]}(z)$ contains at most $32C/\varepsilon$ points in the set $\{p_x\}_{x\in K_n}$. Consequently, we have that
$$\sum_{[x]\in P_T(X),~nL_0(1-\varepsilon)-N\delta\le \pi(x)\le nL_0(1+\varepsilon)+N\delta} \pi(x)\ge \varepsilon/32C.\#K_n.$$

Thus,
$$\#P_{nL_0(1+\varepsilon)+N\delta}(X)\geq\varepsilon/32C.\#K_n..$$
 Therefore, 
\begin{eqnarray*}
\limsup_{T\to\infty}\frac{1}{T}\log\# P_T(X)&\ge&\limsup_{n\to\infty}\frac{1}{nL_0(1+\varepsilon)+N\delta}\log\# P_{nL_0(1+\varepsilon)+N\delta}(X)\\
&=&\lim_{n\to\infty}\frac{nL_0}{nL_0(1+\varepsilon)+N\delta}\limsup_{n\to\infty}\frac{1}{nL_0}\log\# P_{nL_0(1+\varepsilon)+N\delta}(X)\\
&\ge&\frac{1}{1+\varepsilon}\liminf_{n\to\infty}\frac{1}{nL_0}\log(\varepsilon/32C.\#K_n)\\
&\ge&\frac{1}{1+\varepsilon}\liminf_{n\to\infty}\frac{1}{nL_0}\log\#K_n
\end{eqnarray*} 
   Thus, by letting $\varepsilon\to 0$, one has $$\limsup_{T\rightarrow\infty}\frac{1}{T}\log \#P_T(X)~~\geq~~h_\mu(X).$$   
\end{proof} 
Theorem \ref{Thm:measure-exponential} is now proved, hence the proof of the main theorem is complete.

\paragraph{Acknowledgements.}
We are grateful to S. Crosivier and X. Wen
for useful communications.

\vskip 5pt

\noindent Wanglou Wu

\noindent School of Mathematical Sciences

\noindent Soochow University, Suzhou, 215006, P.R. China

\noindent wuwanlou@163.com

\vskip 5pt

\noindent Dawei Yang

\noindent School of Mathematical Sciences

\noindent Soochow University, Suzhou, 215006, P.R. China

\noindent yangdw1981@gmail.com, yangdw@suda.edu.cn

\vskip 5pt

\noindent Yong Zhang

\noindent School of Mathematical Sciences

\noindent Soochow University, Suzhou, 215006, P.R. China

\noindent yongzhang@suda.edu.cn

\end{document}